\numberwithin{equation}{section}
\numberwithin{figure}{section}
\newtheorem{theorem}{Theorem}[section]
\newtheorem{theoremletter}{Theorem}
\newtheorem{proposition}[theorem]{Proposition}
\newtheorem{lemma}[theorem]{Lemma}
\newtheorem*{lemmaletter}{Compatibility Lemma}
\newtheorem{conjecture}[theorem]{Conjecture}
\theoremstyle{definition}
\newtheorem{definition}[theorem]{Definition}
\newtheorem{example}[theorem]{Example}
\newtheorem{question}[theorem]{Question}
\newtheorem{remark}[theorem]{Remark}
\newcommand{\CC}{\mathbb{C} }
\newcommand{\ZZ}{\mathbb{Z} }
\newcommand{\QQ}{\mathbb{Q} }
\newcommand{\RR}{\mathbb{R} }
\newcommand{\cA}{\mathcal{A} }
\newcommand{\cC}{\mathcal{C} }
\newcommand{\cF}{\mathcal{F} }
\newcommand{\cS}{\mathcal{S} }
\newcommand{\cT}{\mathcal{T} }
\newcommand{\cU}{\mathcal{U} }
\newcommand{\ba}{\mathbf{a} }
\newcommand{\bfb}{\mathbf{b} }
\newcommand{\be}{\mathbf{e} }
\newcommand{\bx}{\mathbf{x} }
\newcommand{\by}{\mathbf{y} }
\newcommand{\bA}{\mathbf{A} }
\newcommand{\bE}{\mathbf{E} }
\def\SL{\mathrm{SL}}
\def\spec{\mathrm{Spec}\, }
\def\im{\mathrm{im}}
\begin{document}

\title[Consequences of the compatibility of skein algebra and cluster algebra]{Consequences of the compatibility of skein algebra and cluster algebra on surfaces}
\date{\today}

\author{Han-Bom Moon}
\address{Department of Mathematics, Fordham University, New York, NY 10023}
\email{hmoon8@fordham.edu}

\author{Helen Wong}
\address{Department of Mathematical Sciences, Claremont McKenna College, Claremont, CA 91711}
\email{hwong@cmc.edu}

\begin{abstract}
We investigate two algebra of curves on a topological surface with interior punctures -- the cluster algebra of surfaces defined by Fomin, Shapiro, and Thurston, and the generalized skein algebra constructed by Roger and Yang. By establishing their compatibility, we resolve Roger-Yang's conjecture on the deformation quantization of the decorated Teichm\"uller space. We also obtain several structural results on the cluster algebra of surfaces. The cluster algebra of a positive genus surface is not finitely generated, and it differs from its upper cluster algebra. 
\end{abstract}

\maketitle

\section{Introduction}\label{sec:introduction}

By a surface $\Sigma_{g, n}$, we denote a compact Riemann surface of genus $g$, without boundaries, minus $n$ punctures. We may associate two `algebras of curves' on $\Sigma_{g, n}$, but coming from entirely different motivations -- one from geometric topology,  and the other from combinatorial algebra. In this paper, we establish compatibility between the two algebras. By employing it, we prove several structural results about each of the algebras that might not be readily apparent if considering each algebra separately.  

The first algebra is  the \emph{curve algebra} $\cC(\Sigma_{g, n})$,  which belongs to a family of invariants of surfaces that are related to the Jones polynomial for knots \cite{Jones85} and to the Witten-Reshetikhin-Turaev topological quantum field theory \cite{Wit89, RT91, BHMV95}. The most well-studied of this family is the Kauffman bracket skein algebra of an unpunctured surface  \cite{Przytycki91, Turaev91}.  It is known to be related to hyperbolic geometry--- the skein algebra is the deformation quantization of the  $\SL_{2}$-character variety, which contains the Teichm\"uller space of the surface \cite{Turaev91, Bullock97, BullockFrohmanJKB99, PrzytyckiSikora00}. 
In \cite{RogerYang14}, Roger and Yang sought to generalize this relationship between the skein algebra and the Teichm\"uller space to the case of a punctured surface.  For a punctured surface $\Sigma_{g, n}$,  they defined a generalized skein algebra $\cS^{q}(\Sigma_{g, n})$ 
spanned by disjoint unions of framed knots, arcs, and vertex classes. They proposed that it should be the deformation quantization for the decorated Teichm\"uller space $\cT^{d}(\Sigma_{g, n})$ constructed by Penner in \cite{Penner87, Penner92}.  The curve algebra $\cC(\Sigma_{g, n})$ we study in this paper is the classical limit of Roger-Yang's generalized skein algebra $\cS^{q}(\Sigma_{g, n})$ obtained by setting $q = 1$.

The second algebra studied in this paper is the \emph{cluster algebra} $\cA(\Sigma_{g, n})$ of a surface. Such cluster algebras were observed by \cite{GekhtmanShapiroVainshtein05, FominShapiroThurston08} to be interesting examples of the cluster algebras originally introduced by Fomin and Zelevinsky in \cite{FominZelevinsky02} for studying the total positivity and dual canonical bases in Lie theory. Defined for any punctured surface $\Sigma_{g, n}$ admitting an ideal triangulation, the cluster algebra $\cA(\Sigma_{g, n})$ is generated by arcs with \emph{tagging} (of plain or notched) at its endpoints. 
From the combinatorial perspective, the tagging is needed for $\cA(\Sigma_{g, n})$ to have the structure of a cluster algebra,
and various geometric interpretations can be seen from \cite{FockGoncharov06, MusikerSchifflerWilliams11, FominThurston18, AllegrettiBridgeland20}. 
This paper was started in part from the authors' attempt to better understand the relationship between the two algebras.  

Each of the two algebra has its own distinct features, and the main results of this paper follow from transferring advantageous properties from one algebra to another.  Our primary tool is an injective homomorphism from the curve algebra $\cC(\Sigma_{g, n})$ to the cluster algebra $\cA(\Sigma_{g, n})$, which manifests the `compatibility' of the two commutative algebras.   By leveraging the integrality of $\cA(\Sigma_{g, n})$, we prove that the generalized skein algebra $\cS^{q}(\Sigma_{g, n})$ is a deformation quantization of $\cT^{d}(\Sigma_{g, n})$ and resolve Roger-Yang's conjecture (Theorem \ref{thm:defquantTeich}).  Compatibility also enables us to define a nontrivial `reduction' map and prove the non-finite generation of $\cA(\Sigma_{g,n})$ (Theorem \ref{thm:infinitegenerationA}) for $g \ge 1$. The unifying theme of this paper is the interplay between the two algebras afforded by compatibility. 

\subsection{Compatibility of curve algebra and cluster algebra}\label{ssec:compatibility}

Let $\Sigma_{g, n}$ be a Riemann surface of genus $g$ with $n>0$ punctures. We assume that $\chi(\Sigma_{g, n}) < 0$, so that  $n$-punctured spheres with $n = 1, 2$ are excluded. When we define the cluster algebra, we also exclude the three-punctured sphere.

We have an explicit comparison of $\cA(\Sigma_{g, n})$ and $\cC(\Sigma_{g, n})$, which will be the key step to the main results of this paper. 

\begin{lemmaletter}\label{lem:compatibility}
Let $\cA(\Sigma_{g, n})$ be the cluster algebra and $\cC(\Sigma_{g, n})$ be the curve algebra associated to $\Sigma_{g, n}$. Then there is a monomorphism 
\[
	\rho : \cA(\Sigma_{g, n}) \to \cC(\Sigma_{g, n}). 
\]
\end{lemmaletter}

This is not merely an existence statement. As discussed in Section \ref{sec:rho}, the construction of $\rho$ gives a simple geometric interpretation of the tagging, which can be plain or notched (Definition \ref{def:rholocal}). The upshot is that ``a notch is a vertex class,'' where a vertex class is a formal variable in $\cC(\Sigma_{g, n})$ assigned to each puncture (Definition \ref{def:curvealgebra}).  

Both algebras have their geometric origin from the same decorated Teichm\"uller space due to Penner, and the proof of the Compatibility Lemma  is relatively straightforward (see Section \ref{sec:rho}). Indeed, a similar compatibility result for surfaces with boundaries but without interior punctures was proven by Muller in \cite{Muller16}. We note that in  because there are no interior punctures, the vertex classes in the skein algebra and the tagged arcs in the cluster algebra do not exist, and thus the interpretation of the tagging as a vertex is new in the punctured surface case. 

Like other recent developments \cite{BazierMatteSchiffler22, HikamiInoue15, LeeSchiffler19, NagaiTerashima20, Yacavone19}, one could think of these compatibility results as another indication of deep connection between knot theory and cluster algebra.   Instead, what we would rather emphasize here is the key role the Compatibility Lemma plays in the proofs of the main results of this paper, as we will see throughout the paper.  Let us now describe the main results in this paper, beginning first with the quantum theory of skein algebras and then turning to the structural theory of cluster algebras.

\subsection{Skein algebra and deformation quantization}

In \cite{RogerYang14}, Roger and Yang introduced a generalized skein algebra $\cS^{q}(\Sigma_{g, n})$ as a candidate of the deformation quantization of $\cT^{d}(\Sigma_{g, n})$. Their program consists of two steps. First, they showed that $\cS^{q}(\Sigma_{g, n})$ is a deformation of quantization of its classical limit $\cC(\Sigma_{g, n})$. They then proved that there is a Poisson algebra homomorphism $\Phi : \cC(\Sigma_{g, n}) \to C^{\infty}(\cT^{d}(\Sigma_{g, n}))$ whose Poisson structures are given by the generalized Goldman bracket and the Weil-Peterssen form, respectively \cite[Theorem 1.2]{RogerYang14}. Thus, if the Poisson algebra representation is faithful (meaning $\Phi$ is injective), then $\cS^{q}(\Sigma_{g, n})$ can be understood as the quantization of $\cT^{d}(\Sigma_{g, n})$.   However, they left the faithfulness as a conjecture \cite[Conjecture 3.4]{RogerYang14}. In Section \ref{sec:impcurve}, we prove it  by employing Compatibility Lemma and finish Roger and Yang's program.

\begin{theoremletter}\label{thm:defquantTeich}
The Roger-Yang generalized skein algebra $\cS^{q}(\Sigma_{g, n})$ is a deformation quantization of the decorated Teichm\"uller space $\cT^{d}(\Sigma_{g, n})$. 
\end{theoremletter}

Moreover, a consequence of our proof of Theorem \ref{thm:defquantTeich} is that the fractional algebras of both the two algebras $\cA(\Sigma_{g,n})$ and $\cC(\Sigma_{g, n})$ are identical. It thus follows that

\begin{theoremletter}\label{thm:defquantcluster}
The Roger-Yang generalized skein algebra $\cS^{q}(\Sigma_{g, n})$ is a deformation quantization of $\cA(\Sigma_{g, n})$. 
\end{theoremletter}

Note that in our earlier paper \cite{MoonWong21}, we already showed that Theorem \ref{thm:defquantTeich} hold when $n$ is relatively large compared to $g$ \cite[Theorem B]{MoonWong21}. The proof was based on a long diagramatical computation with little theoretical support nor intuition. We find that our proofs here, based on the relationship with cluster algebras established by Compatibility Lemma, provides a more satisfactory theoretical reasoning. 

\begin{remark}
In \cite{Muller16}, Muller has a similar result as Theorem \ref{thm:defquantcluster} in the case of surfaces with boundaries but without punctures, resulting in a quantization where arcs $q$-commute.  However, Muller's method cannot apply in the case considered in this paper, because $\cA(\Sigma_{g, n})$ does not extend to a quantum cluster algebra of Berenstein-Zelevinski \cite{BerensteinZelevinsky05} if there is an interior puncture (see Remark \ref{rmk:quantumclusteralgebra} for a more in depth discussion). Instead, the quantization of Theorems \ref{thm:defquantTeich} and \ref{thm:defquantcluster} use the Poisson structure for the Roger-Yang skein algebra based on Mondello's computation for $\lambda$-length of arcs \cite{Mondello09}.  In particular, these $\lambda$-lengths do not form log canonical coordinates in the sense of \cite[Section 2.2]{GekhtmanShapiroVainshtein05}.   Hence arcs are not $q$-commutative in the quantization, but satisfy a two-term skein relation that generalizes the Ptolemy exchange relations for cluster variables.
\end{remark}

\subsection{Comparison of cluster algebras with their upper cluster algebra}
 
The \emph{upper cluster algebra} $\cU$ (Definition \ref{def:upperclusteralgebra}) contains the ordinary cluster algebra $\cA$ and is constructed from the same combinatorial data of seed. In many ways, $\cU$  behaves better than $\cA$, and thus the question of whether $\cA = \cU$ or not has attracted many researchers in the cluster algebra community. For the summary of some known results, see \cite[Section 1.2]{CanakciLeeSchiffler15} and a very recent result \cite{IshibashiOyaShen23}. For $\cA(\Sigma_{g, n})$, when $n = 1$, it was shown that $\cA(\Sigma_{g, 1}) \ne \cU(\Sigma_{g, 1})$ by Ladkani \cite{Ladkani13}. 

Here, we use the curve algebra $\cC(\Sigma_{g, n})$ and a variation $\cC(\Sigma_{g, n})'$ (Definition \ref{def:Cprime}), and obtain an inclusion 
\begin{equation}\label{eqn:comparison}
	\cA(\Sigma_{g, n}) \subset \cC(\Sigma_{g, n})' \subset \cU(\Sigma_{g, n}).
\end{equation}
The algebra $\cC(\Sigma_{g, n})'$ is a subalgebra of $\cC(\Sigma_{g, n})$ generated by the image of $\cA(\Sigma_{g, n})$ and the Kauffman bracket skein algebra of $\Sigma_{g, n}$ generated by isotopy classes of loops. We conjecture that $\cC(\Sigma_{g, n})' = \cU(\Sigma_{g, n})$ (Conjecture \ref{conj:CandU}). However, to the authors' knowledge, it is still unknown if the `geometric' subalgebra $\cC(\Sigma_{g, n})'$ generated by tagged arcs and loops coincide with $\cU(\Sigma_{g, n})$. See Remark \ref{rem:midalgebra}. For the comparison of $\cC(\Sigma_{g, n})$ and $\cU(\Sigma_{g, n})$, see Remark \ref{rem:CandU}.

\subsection{Determining whether cluster algebras are finitely generated}

It is known that the Roger-Yang skein algebra is finitely generated \cite{BKPW16JKTR}, and our method is to use the compatibility map $\rho$ to deduce results about the cluster algebra. 

By \cite{Ladkani13}, it is known that $\cA(\Sigma_{g, 1})$ is not finitely generated for all $g \ge 1$. We prove the following:

 \begin{theoremletter}\label{thm:infinitegenerationA}
The cluster algebra  of a sphere $\cA(\Sigma_{0, n})$ is finitely generated. On the other hand, for $g \ge 1$, $\cA(\Sigma_{g, n})$ is not finitely generated. 
\end{theoremletter}

Note that the cluster algebra is defined only when a surface has punctures, so $n\geq 1$ for all cases.  And in the case of a sphere, we additionally require $n \geq 3$ punctures. From Theorem \ref{thm:infinitegenerationA} and \eqref{eqn:comparison}, we have the immediate corollary:

\begin{theoremletter}\label{thm:torusupper}
For every $g \ge 1$, $\cA(\Sigma_{g, n}) \ne \cU(\Sigma_{g, n})$. 
\end{theoremletter} 

We would also like to note that the case of $g= 0$ is exceptional.  Indeed, we expect that $\cA(\Sigma_{0, n}) = \cU(\Sigma_{0, n})$ (Conjecture \ref{conj:CandU}). For instance, when $g = 0$, one can show that loops are also in $\cA(\Sigma_{0, n})$, by adapting the computation in \cite{BKPW16Involve} and \cite{ACDHM21} (Remark \ref{rem:g=0AC}).

\subsection{Structure of the paper}

Sections \ref{sec:arcalgebra} and \ref{sec:clusteralgebrasurface} are review of the definition and basic properties of $\cC(\Sigma_{g, n})$ and $\cA(\Sigma_{g,n})$, respectively, and related constructions. In Section \ref{sec:rho}, we start with the compatibility map $\rho$, and show that is well-defined and injective.  The next two sections detail our main results---Section \ref{sec:impcurve} establishes the curve algebra as a quantization of decorated Teichmuller space, and Section \ref{sec:clusteralgebraimplication} discusses algebraic properties of the cluster algebra and upper cluster algebra. 

\acknowledgement

The authors would like to thank to Wade Bloomquist, Hyunkyu Kim, Thang Le, Kyungyong Lee, Gregg Musiker, Fan Qin, and Dylan Thurston for valuable conversations. This work was completed while the first author was visiting Stanford University. He gratefully appreciates the hospitality during his visit.  The second author is partially supported by grant DMS-1906323 from the US National Science Foundation and a Birman Fellowship from the American Mathematical Society. 


\section{The curve algebra $\cC(\Sigma_{g,n})$}\label{sec:arcalgebra}

In this section, we give a formal definition and basic properties of the curve algebra $\cC(\Sigma_{g, n})$. For details, see \cite[Section 2.2]{RogerYang14} and \cite[Section 2.4]{MoonWong21}. 

In this paper, a \emph{surface} is  $\Sigma_{g,n} := \overline{\Sigma}_{g}\setminus V$, where $\overline{\Sigma}_{g}$ is a Riemann surface of genus $g$ without boundary, and $V = \{v_{1}, \ldots, v_{n}\}$ is a finite set of points in $\overline{\Sigma}_{g}$. We call $V$ the set of \emph{punctures} or \emph{vertices}.   

 A \emph{loop} $\underline{\alpha}$ on $\Sigma_{g, n}$ is an immersion of a circle into $\Sigma_{g, n}$. An \emph{arc} $\underline{\beta}$ in $\Sigma$ is an immersion of $[0, 1]$ into $\overline{\Sigma}_{g}$ such that the image of $(0, 1)$ is in $\Sigma_{g, n}$ and the image of two endpoints are (not necessarily distinct) points in $V$. The seemingly unnecessary underbar notation will be justified in Section \ref{sec:clusteralgebrasurface}.

\begin{definition}\label{def:curvealgebra}
Let $R$ be a commutative ring. The \emph{curve algebra $\cC(\Sigma_{g, n})_{R}$} is the $R$-algebra generated by isotopy classes of loops,  arcs, $V = \{v_{i}\}$, and their formal inverses $\{v_{i}^{-1}\}$, modded out by the following relations:
\begin{center}
\begin{tabular}{lll}
(1)& (Skein relation) & 
$\begin{minipage}{.4in}\includegraphics[width=\textwidth]{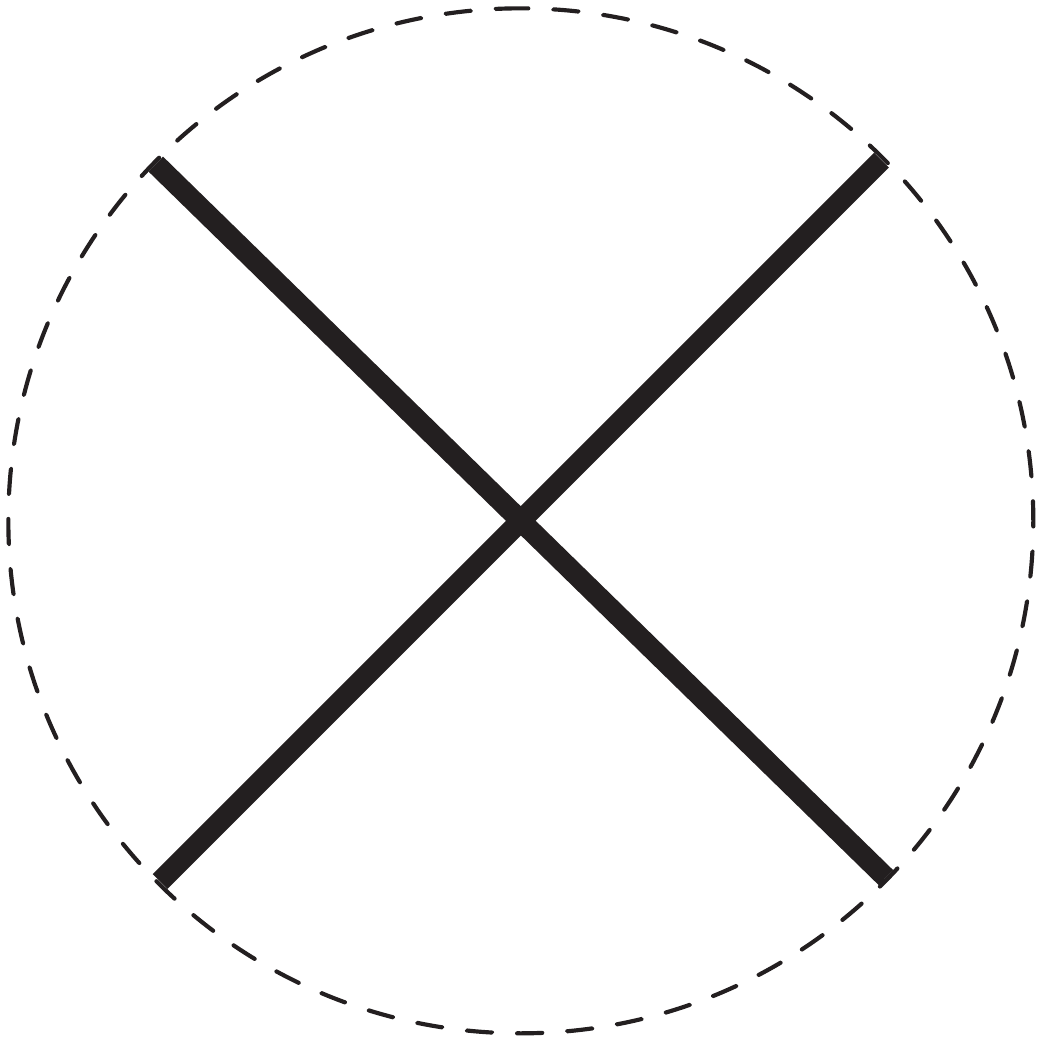}\end{minipage} 
-  \left( \begin{minipage}{.4in}\includegraphics[width=\textwidth]{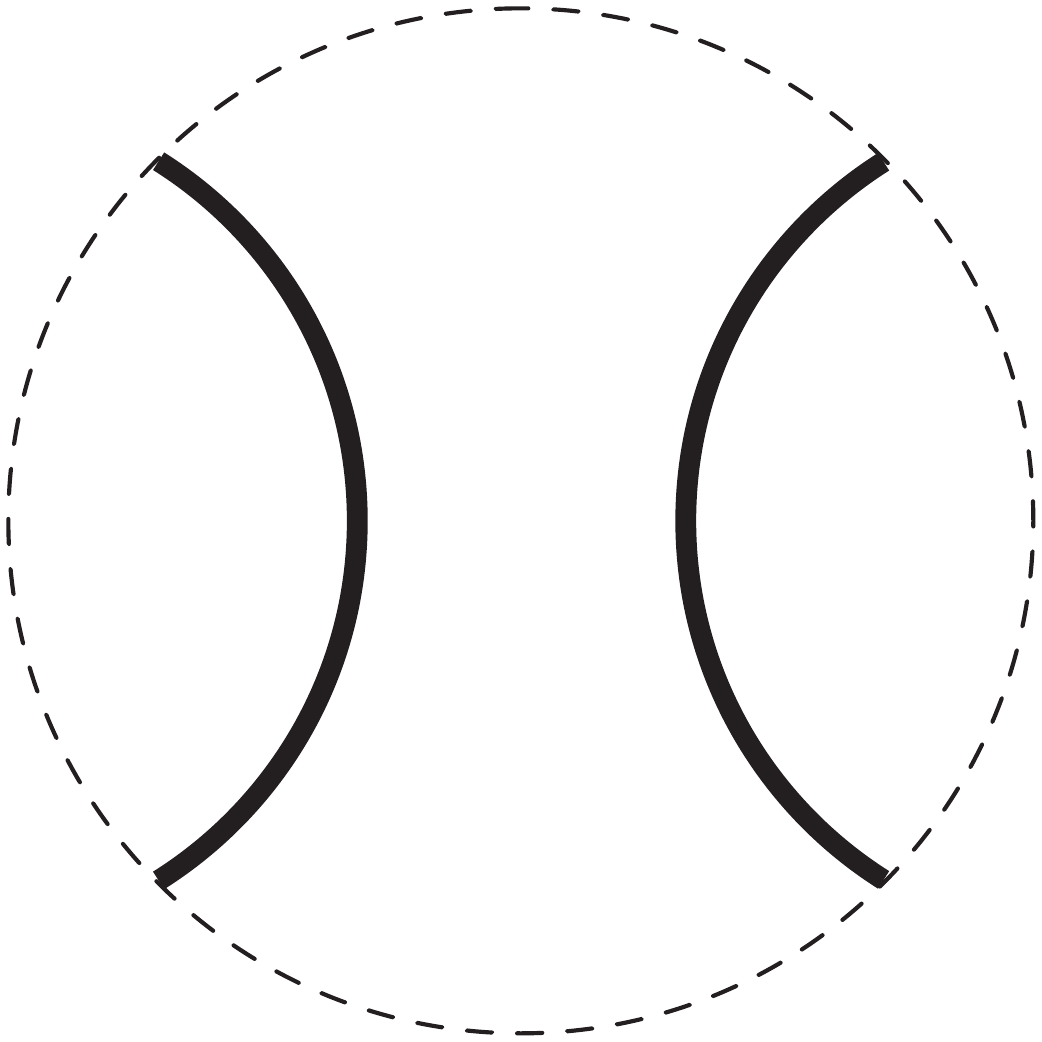}\end{minipage} 
+\begin{minipage}{.4in}\includegraphics[width=\textwidth]{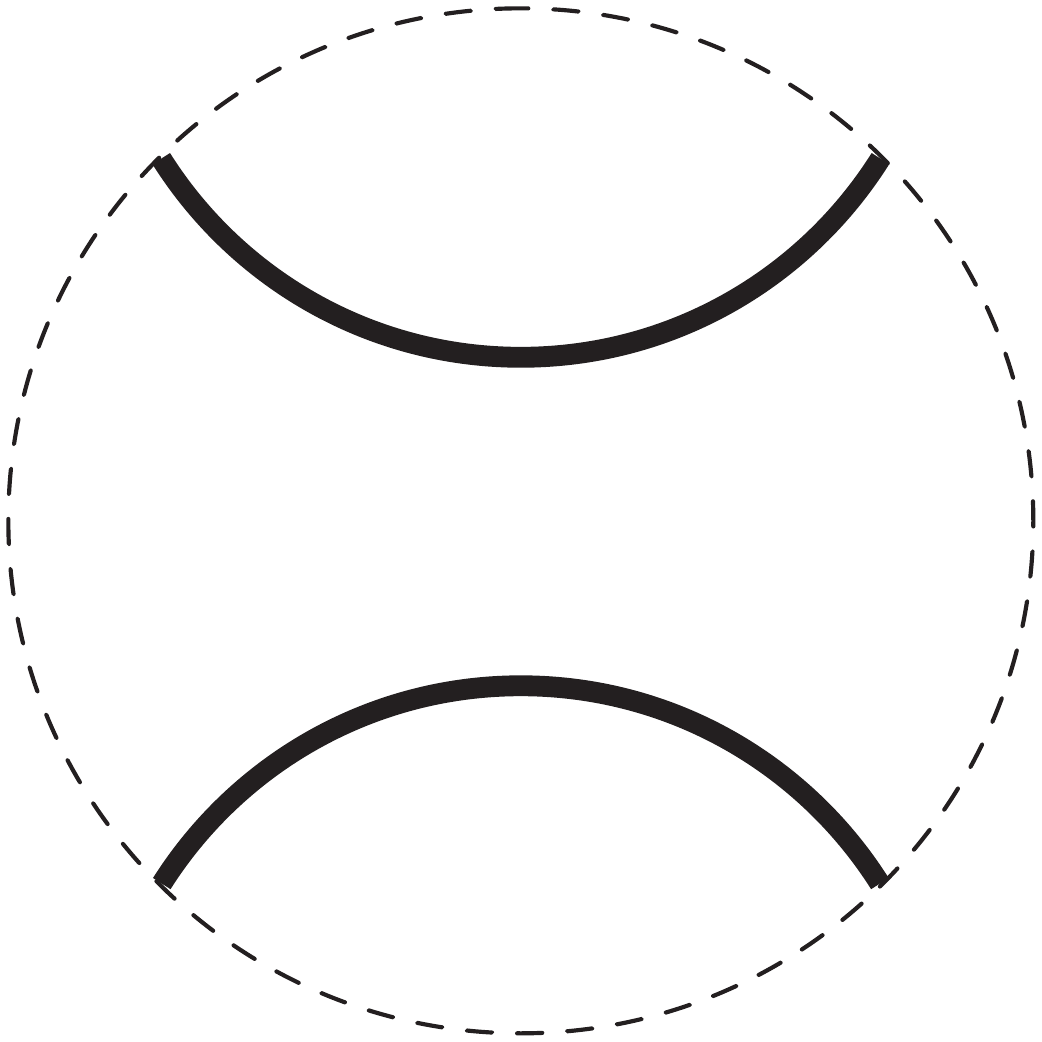}\end{minipage}  \right)$\\
(2) & (Puncture-skein relation) &
$v_i \begin{minipage}{.4in}\includegraphics[width=\textwidth]{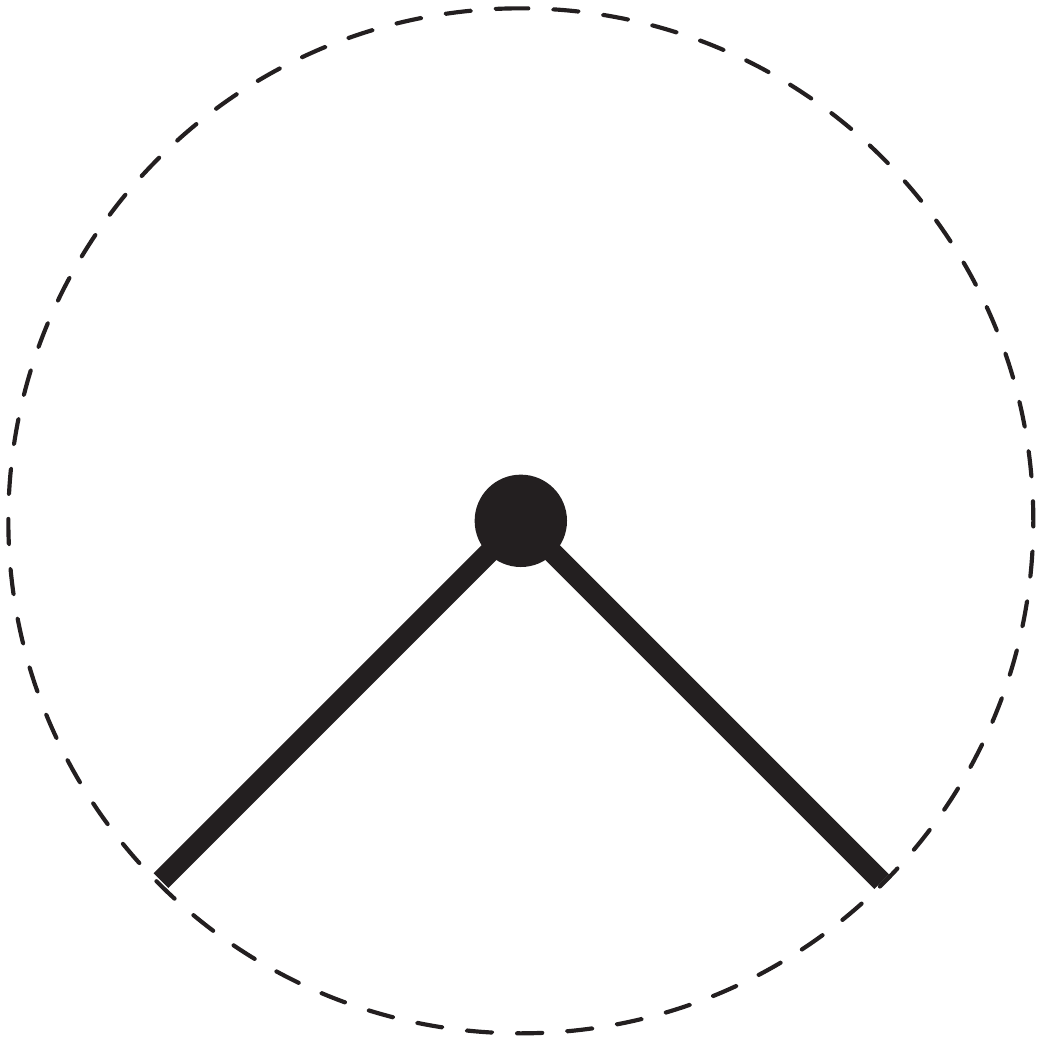}\end{minipage} 
-  \left( \begin{minipage}{.4in}\includegraphics[width=\textwidth]{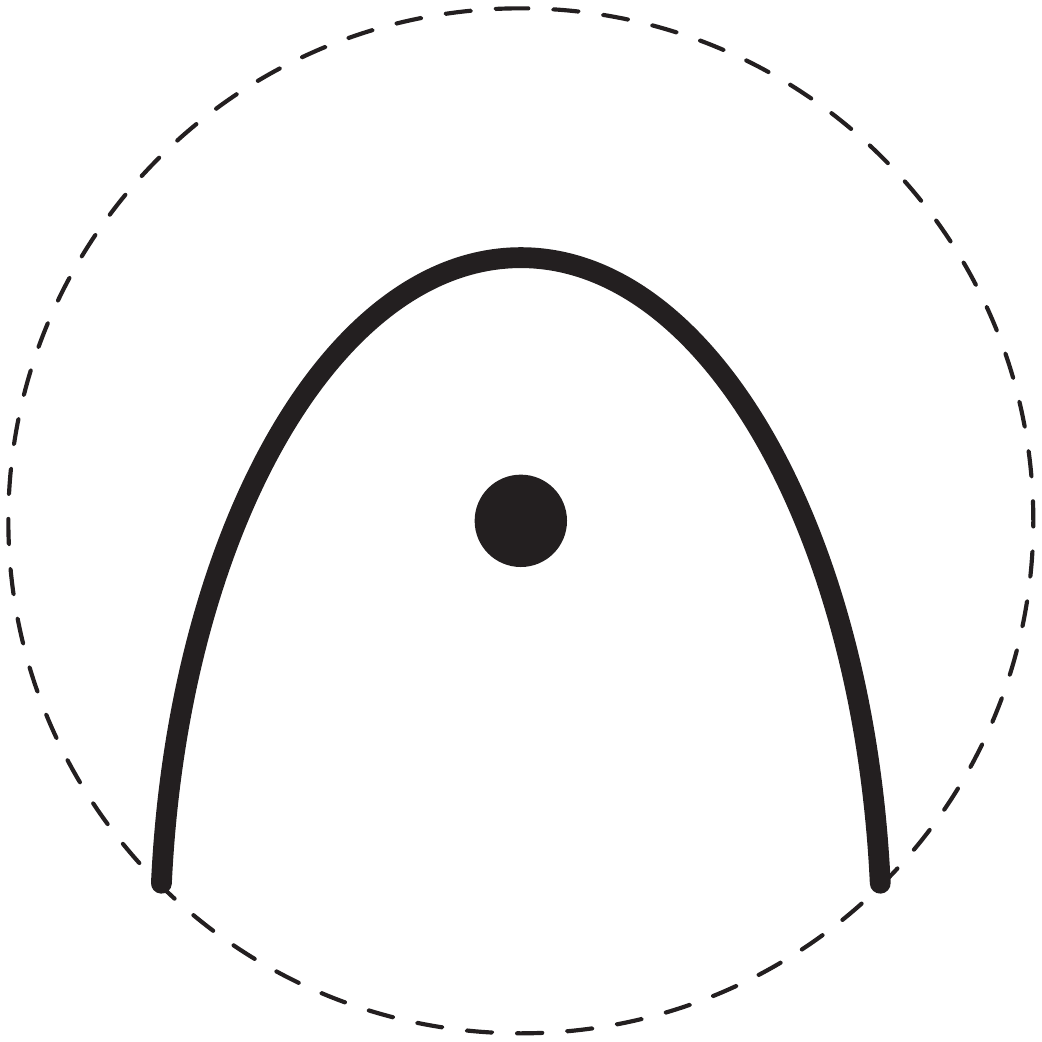}\end{minipage} 
+\begin{minipage}{.4in}\includegraphics[width=\textwidth]{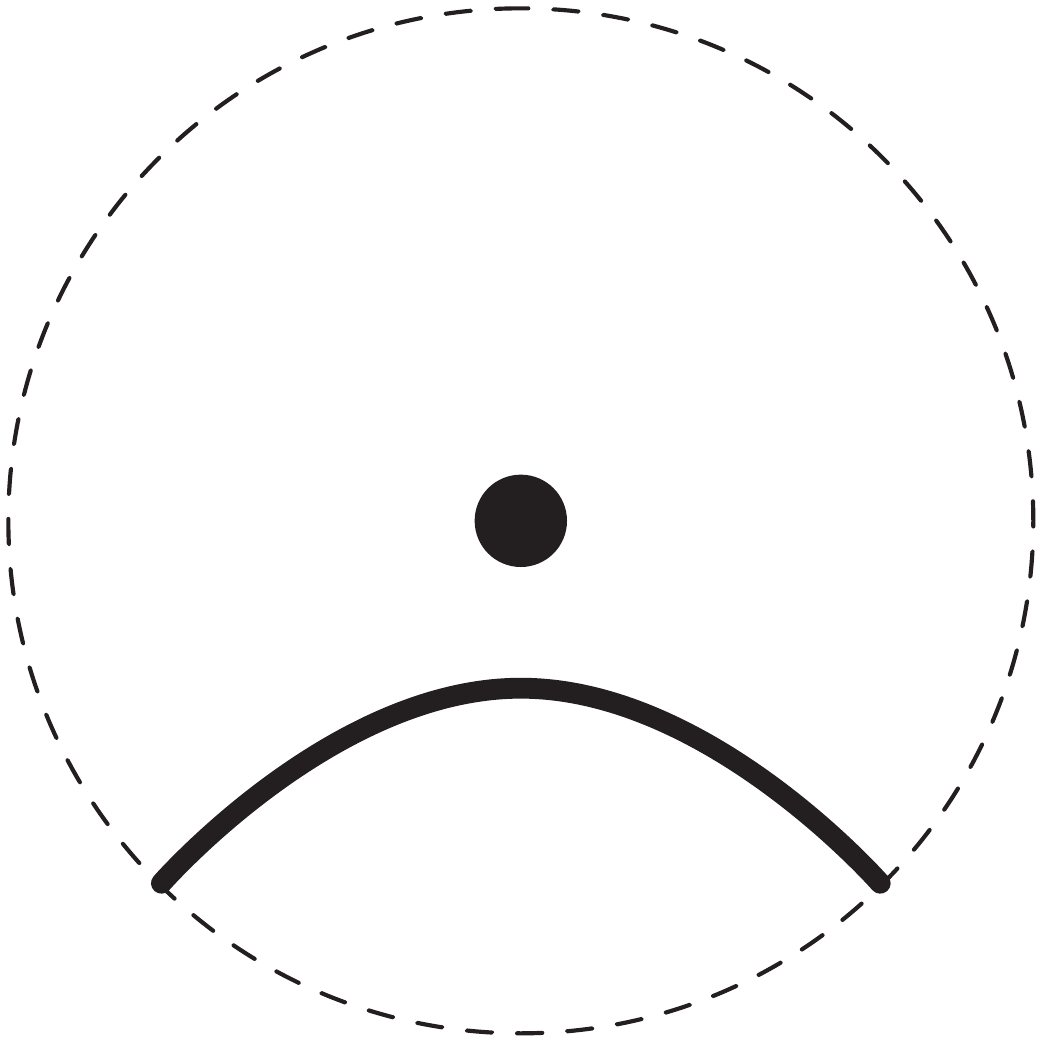}\end{minipage}  \right)$\\
(3)& (Framing relation) & 
$\begin{minipage}{.4in}\includegraphics[width=\textwidth]{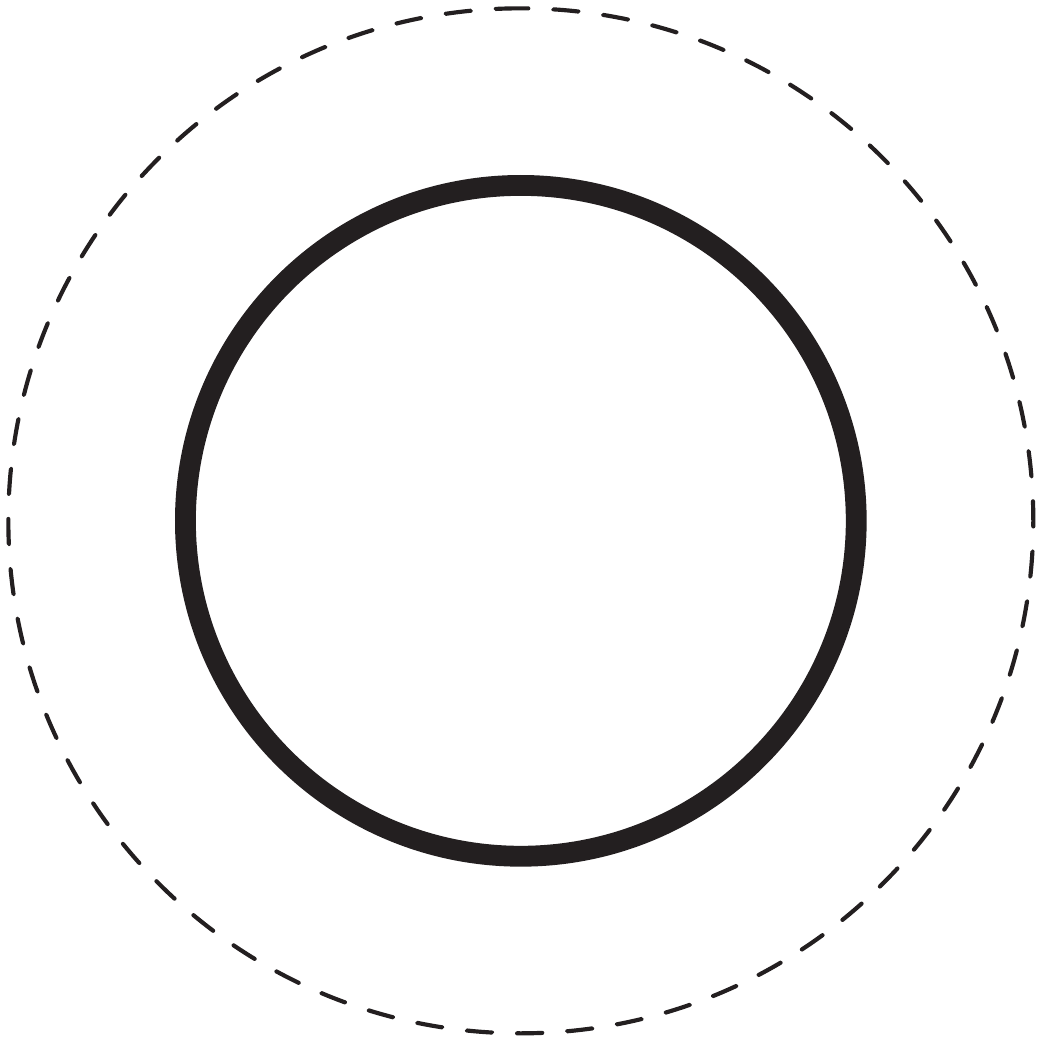} \end{minipage} 
+ 2$\\
(4) & (Puncture-framing relation) & 
$\begin{minipage}{.4in}\includegraphics[width=\textwidth]{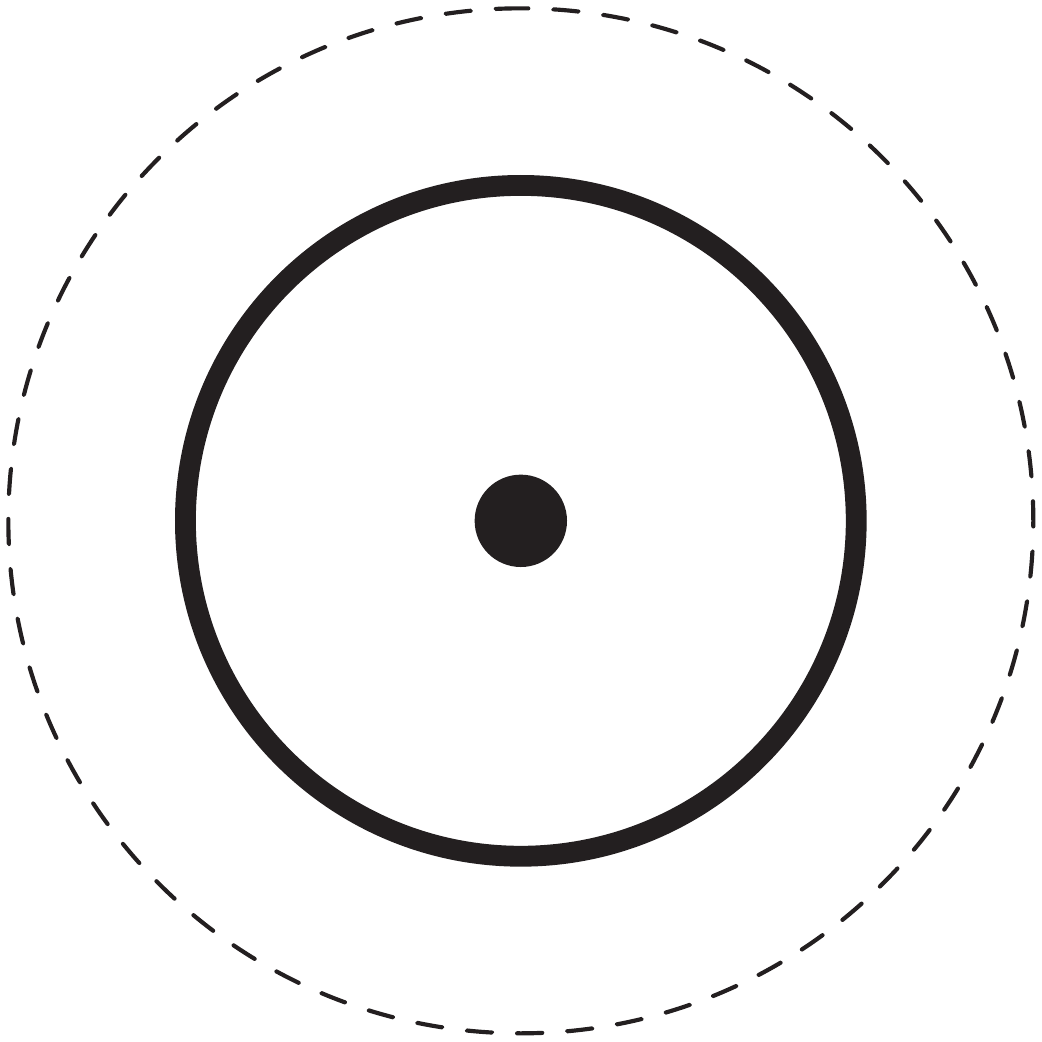} \end{minipage} 
-2$.
\end{tabular}
\end{center}

The multiplication of elements in $\cC(\Sigma_{g, n})_R$ are represented by taking the union of generators (and counted with multiplicity). We allow the empty curve $\emptyset$ and it is the multiplicative identity. In the relations, the curves are assumed to be identical outside of the small balls depicted, and the $i$-th puncture $v_i$ is depicted in the second relation. 

We set $\cC(\Sigma_{g, n}) := \cC(\Sigma_{g, n})_{\ZZ}$, so that we mean $R = \ZZ$ by default. Then $\cC(\Sigma_{g, n})_{R} = \cC(\Sigma_{g, n}) \otimes_{\ZZ}R$. 
\end{definition}

\begin{remark}
Note that in the curve algebra originally discussed by Roger-Yang \cite{RogerYang14},  they used  $R = \CC$, and  the vertices $\{v_{i}\}$ were treated as coefficients. But for our purpose, it is more natural to think of the vertices as  generators of the algebra. 
\end{remark}

 \begin{remark}\label{rem:torsionfree}
One might wonder about our choice of coefficient ring $\ZZ$, as compared to Roger and Yang's choice of  $\CC$. Clearly, there is a morphism $\cC(\Sigma_{g, n}) \to \cC(\Sigma_{g, n})_{\CC}$.  In addition, one can adapt the proof of \cite[Theorem 2.4]{RogerYang14} by replacing the $\CC$-coefficient by the $\ZZ$-coefficient to show that $\cC(\Sigma_{g, n})$ (with $\ZZ$-coefficients) has no torsion. Thus, we have an inclusion $\cC(\Sigma_{g, n}) \subset \cC(\Sigma_{g, n})_{\CC}$. It follows, for example, that if $\cC(\Sigma_{g, n})_{\CC}$ is an integral domain, then $\cC(\Sigma_{g, n})$ is also an integral domain. 
\end{remark}

\begin{example}\label{ex:unpuncturedmonogon}
Let $\alpha$ be an arc bounding an unpunctured monogon with the vertex $v$.  Then we can compute $v \alpha$ as follows: 
\[ v \left( \begin{minipage}{.5in}\includegraphics[width=\textwidth]{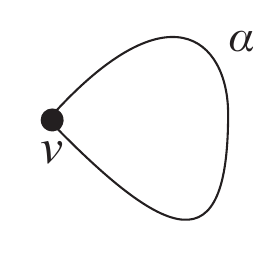}\end{minipage} \right) 
=
\begin{minipage}{.5in}\includegraphics[width=\textwidth]{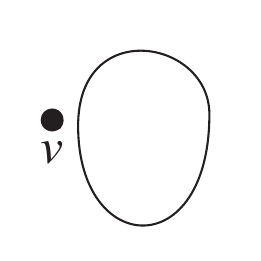}\end{minipage}
+
\begin{minipage}{.5in}\includegraphics[width=\textwidth]{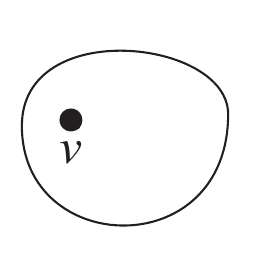}\end{minipage} 
= -2 + 2 = 0
\]
Since $v$ is invertible in $\cC(\Sigma_{g, n})$, this shows that any arc bounding an unpunctured monogon is zero in $\cC(\Sigma_{g, n})$.
\end{example}

\begin{lemma}\label{lem:monogon}
Let $v$ and $w$ be two distinct punctures, and $e$ be an arc connecting $v$ and $w$. In addition, let $\gamma$ be an arc with both ends at $v$ that bounds a one-punctured monogon containing $w$.  Then $\gamma = we^{2}$. 
\end{lemma}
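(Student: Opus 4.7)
The plan is to apply the puncture-skein relation (item (2) of Definition \ref{def:curvealgebra}) at the puncture $w$ to the product $we^{2}$. Interpreting $e^{2}$ as the product of two parallel copies of the arc $e$, these copies contribute two arc-ends at $w$, so $we^{2}$ has precisely the form $v_{i}X$ (with $v_{i} = w$) that appears on the left-hand side of the puncture-skein relation.

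After invoking the relation, $we^{2}$ equals a sum of two local resolutions at $w$, each of which reconnects the two strand-ends by a short arc in a disk neighborhood of $w$ that avoids $w$. I would then identify the two resolutions geometrically. The first resolution joins the two ends by an arc on the side of $w$ \emph{opposite} to the thin bigon between the parallel copies; concatenating with the two copies of $e$ yields an arc based at $v$ whose bounded monogon contains the puncture $w$. This is exactly $\gamma$. The second resolution joins the ends by a short arc running through the bigon between the two parallel copies, producing an arc based at $v$ that bounds an \emph{unpunctured} monogon, since $w$ lies outside the thin bounded region.

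By Example \ref{ex:unpuncturedmonogon}, any arc bounding an unpunctured monogon vanishes in $\cC(\Sigma_{g,n})$. Hence the second resolution contributes $0$, and $we^{2} = \gamma + 0 = \gamma$.

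The only subtle point is the geometric identification of the two resolutions---in particular, checking which side of $w$ the connecting arc must lie on in order for the enclosed monogon to contain $w$ versus to avoid $w$. This is a purely local check in a disk neighborhood of $w$, and once it is settled, the lemma follows immediately from the puncture-skein relation and Example \ref{ex:unpuncturedmonogon}.
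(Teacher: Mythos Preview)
Your argument is correct and matches the paper's proof essentially line for line: apply the puncture-skein relation at $w$ to $we^{2}$, identify one resolution as $\gamma$ and the other as an arc bounding an unpunctured monogon, then invoke Example~\ref{ex:unpuncturedmonogon} to kill the latter. The only difference is that the paper presents this pictorially rather than in words.
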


\begin{proof}
\[
	w e^2 = w \left( \begin{minipage}{1in}\includegraphics[width=\textwidth]{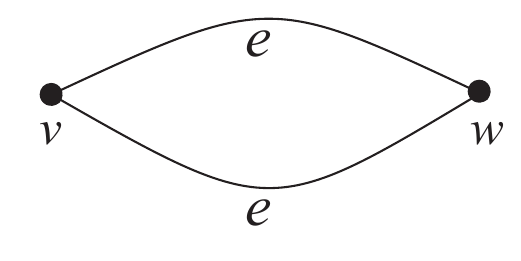}\end{minipage} \right) = 
\left( \begin{minipage}{1in}\includegraphics[width=\textwidth]{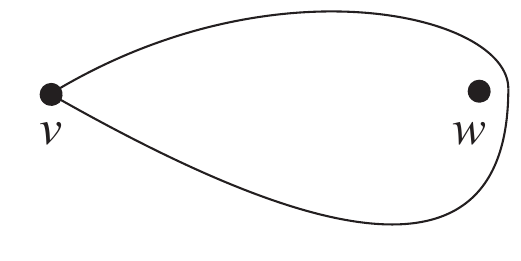}\end{minipage} 
+
\begin{minipage}{1in}\includegraphics[width=\textwidth]{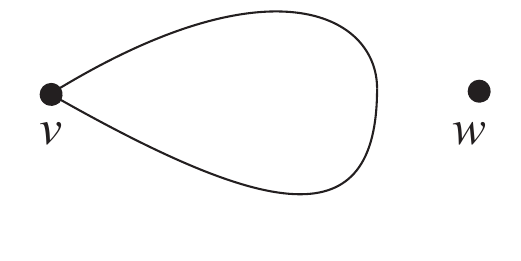}\end{minipage} \right)
=
\left( \begin{minipage}{1in}\includegraphics[width=\textwidth]{arcsquared2.pdf}\end{minipage}  \right)
\]
since any arc bounding an unpunctured monogon in $0$ by the example above. 
\end{proof}

\subsection{Relationship of $\cC(\Sigma_{g, n})$ with hyperbolic geometry}

Let $\cT^{d}(\Sigma_{g, n})$ be the decorated Teichm\"uller space of $\Sigma_{g, n}$ constructed by Penner \cite{Penner87}. It parameterizes all pairs $(m, r)$ where $m$ is a complete hyperbolic metric $\Sigma_{g, n}$ and $r$ is a choice of a horocycle at every puncture of $\Sigma_{g, n}$. Given such a pair $(m, r)$, one can assign a well-defined length to any loop on $\Sigma_{g, n}$ and any arc that goes from puncture to puncture on $\Sigma_{g, n}$. In addition, we set the length of a vertex to be the length of the horocycle around that vertex.  These lengths of loops, arcs, and vertices can then be used  to define $\lambda$-length functions on $\cT^{d}(\Sigma_{g, n})$, and it was shown in \cite{Penner87} that  the $\lambda$-length functions parametrize the ring of $\CC$-valued $C^\infty$ functions on $\cT^{d}(\Sigma_{g, n})$.   These $\lambda$-length functions can be used to define a Poisson structure on $\cT^{d}(\Sigma_{g, n})$ induced by the Weil-Petersson form \cite{Penner92}.

Roger and Yang defined the curve algebra and showed (\cite[Theorem 1.2]{RogerYang14}) that there is a Poisson algebra homomorphism 
\begin{equation}\label{eqn:phi}
	\Phi : \cC(\Sigma_{g, n})_{\CC} \to C^{\infty}(\cT^{d}(\Sigma_{g, n}))
\end{equation}
that  sends any loop, arc, or vertex to its corresponding $\lambda$-length function. One can think of the relations from the curve algebra as designed to mirror the relations from the $\lambda$-length functions in $ C^\infty(\cT^{d}(\Sigma_{g, n}))$. In fact, Roger and Yang conjectured that  the  curve algebra relations captures \emph{all} of the relations from $ C^\infty(\cT^{d}(\Sigma_{g, n}))$, or equivalently, that 

\begin{conjecture}\label{conj:RogerYang}\cite{RogerYang14}
The Poisson algebra homomorphism $\Phi$ in \eqref{eqn:phi} is injective. 
\end{conjecture}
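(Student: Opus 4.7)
The plan is to exploit the monomorphism $\rho \colon \cA(\Sigma_{g,n}) \to \cC(\Sigma_{g,n})$ from Theorem \ref{thm:mainthm1}, together with the classical fact that the cluster algebra of a triangulated surface embeds into the ring of $\lambda$-length functions on $\cT^{d}(\Sigma_{g,n})$ (implicit in Fomin--Shapiro--Thurston and Penner). After extending scalars to $\CC$, one has $\rho_{\CC} \colon \cA_{\CC} \hookrightarrow \cC_{\CC}$, and the known embedding $\cA_{\CC} \hookrightarrow C^{\infty}(\cT^{d})$ should agree, by the very construction of $\rho$, with the composition $\Phi \circ \rho_{\CC}$, since arcs (including vertex classes coming from notched tagged arcs) map to their $\lambda$-length functions under both routes. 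This already yields injectivity of $\Phi$ when restricted to $\rho_{\CC}(\cA_{\CC}) \subset \cC_{\CC}$.

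To extend injectivity from this subalgebra to all of $\cC_{\CC}$, the plan is to show that every element of $\cC_{\CC}$ lies in the field of fractions of $\rho_{\CC}(\cA_{\CC})$. Granting the domain property for $\cC_{\CC}$ (Theorem \ref{thm:mainthm3}, established via the same compatibility framework so that no circularity arises), the fraction field $\mathrm{Frac}(\cC_{\CC})$ is well-defined. The reduction of loops to rational expressions in arcs and vertices proceeds as follows: given a loop $\gamma$, since $n\ge 1$, there is an arc $\alpha$ meeting $\gamma$ transversely; the skein relation (1) applied to a crossing of $\alpha\cdot\gamma$ rewrites the product as a sum of two configurations of lower geometric intersection number with $\alpha$, and iterating (together with the puncture-skein relation (2) to absorb strands collapsing onto a vertex) terminates in a sum of arc monomials, yielding $\gamma \in \mathrm{Frac}(\rho_{\CC}(\cA_{\CC}))$.

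Combining the two steps, one gets $\mathrm{Frac}(\cC_{\CC}) = \mathrm{Frac}(\rho_{\CC}(\cA_{\CC}))$, and the induced field homomorphism into the field of meromorphic functions on $\cT^{d}(\Sigma_{g,n})$ is nonzero and hence injective. Pulling back along the inclusions $\cC_{\CC} \hookrightarrow \mathrm{Frac}(\cC_{\CC})$ and $C^{\infty}(\cT^{d}) \hookrightarrow \mathrm{Mer}(\cT^{d})$ then yields injectivity of the original $\Phi$, proving the conjecture.

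The main obstacle I expect is the inductive loop-to-arc reduction in the second paragraph: one must guarantee that the skein-resolution procedure terminates for every essential loop, handling degenerate cases such as separating loops whose complementary components carry unequal numbers of punctures, and keeping careful track of the vertex powers produced by relation (2) so that the resulting expression genuinely lies in the cluster-algebra subring rather than merely in a larger localization. A secondary delicate point is ensuring Theorem \ref{thm:mainthm3} is obtained independently of Theorem \ref{thm:mainthm2}; this should be achievable by directly leveraging Theorem \ref{thm:mainthm1} and the integral-domain property of the upper cluster algebra.
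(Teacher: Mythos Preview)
Your overall architecture matches the paper's: reduce the injectivity of $\Phi$ to the integrality of $\cC(\Sigma_{g,n})_{\CC}$, and establish integrality separately via Theorem~\ref{thm:mainthm1}. Your fraction-field argument (given integrality, every element of $\cC_{\CC}$ lies in $\mathrm{Frac}(\rho_{\CC}(\cA_{\CC}))$, so $\Phi$ is determined by its restriction to $\rho_{\CC}(\cA_{\CC})$, which is injective) is correct and is precisely the content of Theorem~\ref{thm:IDimpliesRY}, already proved in \cite{MoonWong21}. The ``loop-to-arc reduction'' you flag as the main obstacle is likewise already available as \cite[Lemma~3.2]{MoonWong21}, so that part needs no new work.

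The genuine gap is in your proposed route to integrality. You suggest obtaining Theorem~\ref{thm:mainthm3} ``by directly leveraging Theorem~\ref{thm:mainthm1} and the integral-domain property of the upper cluster algebra.'' This does not work: for $n \ge 2$ the algebra $\cC(\Sigma_{g,n})$ is \emph{not} contained in $\cU(\Sigma_{g,n})$, because the inverse vertex classes $v_i^{-1}$ fail to be Laurent polynomials with respect to tagged triangulations (see Remark~\ref{rem:CandU}). Only the subalgebra $\cC(\Sigma_{g,n})'$ embeds in $\cU(\Sigma_{g,n})$ (Lemma~\ref{lem:inclusion}), and that is not enough to conclude integrality of $\cC(\Sigma_{g,n})$ itself. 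More fundamentally, the natural candidate for an embedding of $\cC_{\CC}$ into a field is $\hat\Phi$, whose injectivity is equivalent to what you are trying to prove; so a circularity remains unless integrality is established by an independent argument.

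The paper closes this gap with a dimension-theoretic argument (Theorem~\ref{thm:intdom}): using \cite[Lemma~3.2]{MoonWong21} one extends $\cA(\Sigma_{g,n})$ inside $\QQ(e_i)$ to a finitely generated domain $\widetilde{\cA}(\Sigma_{g,n})$ that surjects onto $\cC(\Sigma_{g,n})$; since $\spec\widetilde{\cA}_{\CC}$ has Krull dimension at most $m$ while $\spec\cC_{\CC}$ contains an open algebraic torus of dimension $m$ (from \cite[Lemma~3.4]{MoonWong21}), the kernel of $\widetilde{\cA}_{\CC}\twoheadrightarrow\cC_{\CC}$ must be trivial. This Krull-dimension step is the substantive new input you are missing; once you have it, your fraction-field deduction of injectivity goes through, and is in fact exactly how the paper concludes.
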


Theorem \ref{thm:defquantTeich} of this paper proves Roger and Yang's conjecture in all cases, by appealing to the algebraic properties of $\cC(\Sigma_{g, n})_{\CC}$ and the following theorem:

\begin{theorem}[\protect{\cite[Theorem A]{MoonWong21}}]\label{thm:IDimpliesRY}
If $\cC(\Sigma_{g, n})_{\CC}$ is an integral domain, then $\Phi$ is injective. 
\end{theorem}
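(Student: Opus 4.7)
The strategy is to compare both algebras after localizing $\cC(\Sigma_{g,n})_\CC$ at the edges of a fixed ideal triangulation and at the punctures, thereby reducing the injectivity of $\Phi$ to an explicit Laurent-polynomial calculation that is transparent via Penner's coordinates.

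Fix an ideal triangulation $T = \{e_{1}, \ldots, e_{k}\}$ of $\Sigma_{g,n}$, where $k = 6g - 6 + 3n$, and let $S \subset \cC(\Sigma_{g,n})_\CC$ be the multiplicative subset generated by $\{e_{1}, \ldots, e_{k}, v_{1}, \ldots, v_{n}\}$. Since $\cC(\Sigma_{g,n})_\CC$ is a domain by hypothesis, the localization map $\cC(\Sigma_{g,n})_\CC \hookrightarrow S^{-1}\cC(\Sigma_{g,n})_\CC$ is injective. On the geometric side, every $\lambda$-length function is strictly positive on $\cT^{d}(\Sigma_{g,n})$ and hence invertible in $C^{\infty}(\cT^{d}(\Sigma_{g,n}))$, so $\Phi$ extends uniquely to $\Phi' : S^{-1}\cC(\Sigma_{g,n})_\CC \to C^{\infty}(\cT^{d}(\Sigma_{g,n}))$. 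It therefore suffices to prove the injectivity of $\Phi'$.

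To analyze $S^{-1}\cC(\Sigma_{g,n})_\CC$ explicitly, the plan is to use the skein and puncture-skein relations together with the monogon identities of Example \ref{ex:unpuncturedmonogon} and Lemma \ref{lem:monogon} to show, by induction on geometric intersection number with $T$, that every arc and loop in $\cC(\Sigma_{g,n})_\CC$ becomes, after multiplication by a suitable monomial in $\{e_{i}, v_{j}\}$, a $\CC$-polynomial in the same generators. Furthermore, applying the puncture-skein relation repeatedly around each puncture $v_{j}$ produces, inside the localization, an algebraic identity expressing $v_{j}$ as a specific rational function of the surrounding edges; this identity is precisely the algebraic analog of Penner's horocycle-length formula. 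Consequently $S^{-1}\cC(\Sigma_{g,n})_\CC$ is a quotient of $\CC[e_{1}^{\pm 1}, \ldots, e_{k}^{\pm 1}, v_{1}^{\pm 1}, \ldots, v_{n}^{\pm 1}]$ in which each $v_{j}$ is forced to equal Penner's prescribed rational expression in the $e_{i}$'s.

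Finally, Penner's parameterization theorem gives that $\Phi(e_{1}), \ldots, \Phi(e_{k})$ are algebraically independent global coordinates on $\cT^{d}(\Sigma_{g,n})$, and each $\Phi(v_{j})$ is the same rational function of these coordinates that appeared in the previous paragraph. Therefore $\Phi'$ factors as a surjection onto the Laurent polynomial ring $\CC[e_{1}^{\pm 1}, \ldots, e_{k}^{\pm 1}]$ followed by the tautological embedding into $C^{\infty}(\cT^{d}(\Sigma_{g,n}))$; the latter is visibly injective and the former becomes an isomorphism by the generation-and-relations analysis, so $\Phi'$ is injective. I expect the main obstacle to be the combinatorial reduction step, namely verifying that iterated skein and puncture-skein resolutions, supplemented by monogon collapses, suffice to rewrite any arc or loop as a Laurent polynomial in triangulation edges and vertices. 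The loop case is the most delicate, since resolving a crossing of a loop with $T$ produces arcs, forcing the arc case to be controlled first and requiring care that intermediate resolutions do not generate undesirable bounding monogons.
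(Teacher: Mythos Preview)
The paper does not itself prove this theorem; it is quoted from \cite{MoonWong21}. However, the present paper's proof of Theorem~\ref{thm:intdom} invokes several lemmas from that reference (Lemmas~3.2--3.4 of \cite{MoonWong21}) whose content is exactly the machinery you outline: every element of $\cC(\Sigma_{g,n})_\CC$ becomes a Laurent polynomial in the edge classes of a fixed triangulation, the map $\Phi$ factors through $\CC[\lambda_i^{\pm 1}]$, and the localization $S^{-1}\cC(\Sigma_{g,n})_\CC$ at the edge monomials is isomorphic to $\CC[\lambda_i^{\pm 1}]$. Your strategy---localize at the edges (the vertices are already units, so including them in $S$ is harmless), use the integral-domain hypothesis to make the localization injective, identify the localized algebra with a Laurent polynomial ring via Penner's coordinates, and read off injectivity---is precisely the argument of \cite{MoonWong21}.

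One small wording issue: in your final paragraph you describe $\Phi'$ as ``a surjection onto the Laurent polynomial ring $\CC[e_1^{\pm1},\ldots,e_k^{\pm1}]$ followed by the tautological embedding,'' but the logical order is the reverse. What you have established is a surjection $\CC[x_i^{\pm1}] \twoheadrightarrow S^{-1}\cC(\Sigma_{g,n})_\CC$ (from the generation step), whose composite with $\Phi'$ lands in $C^\infty(\cT^d(\Sigma_{g,n}))$ and sends $x_i \mapsto \lambda_i$; algebraic independence of the $\lambda_i$ makes this composite injective, forcing the first surjection to be an isomorphism and hence $\Phi'$ to be injective. This is surely what you intend, and it is correct, but as written the factorization is stated backwards.
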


In  previous work \cite[Theorem B and Section 4]{MoonWong21} , we were able to verify that  $\cC(\Sigma)_{\CC}$ is an integral domain when $\Sigma$ admits a `locally planar' ideal triangulation.  In particular, the genus $g$ and the number of punctures $n$ should satisfy
\[
	n \ge \begin{cases}\lceil \frac{7 + \sqrt{48g}}{2}\rceil, & g \ne 2\\
	10, & g = 2. \end{cases}
\]
In Theorem  \ref{thm:intdom} of this paper, we instead use cluster algebras to obtain an  independent and unconditional proof of integrality, so that Conjecture \ref{conj:RogerYang} applies for any $\Sigma_{g, n}$.

\subsection{Relationship of $\cC(\Sigma_{g, n})$ with Kauffman bracket skein algebra}

Roger and Yang's definition of the curve algebra and the construction of $\Phi$ in \cite{RogerYang14} was  motivated by a search for an appropriate quantization of the decorated Teichmuller space $\cT^{d}(\Sigma_{g, n})$.  In particular, they wanted to mimic and generalize the set-up of \cite{Turaev91, Bullock97, BullockFrohmanJKB99, PrzytyckiSikora00} that establishes the Kauffman bracket skein algebra as a quantization of the $\mathrm{SL}_2$-character variety of $\overline{\Sigma}_{g}$, which contains the Teichm\"uller space as a dense open subspace. Towards this goal, Roger and Yang defined a generalized Goldman bracket for $\cC(\Sigma_{g, n})$ and used it to define a deformation quantization that we here denote by $\cS^{q}(\Sigma_{g, n})$ \cite[Theorem 1.1]{RogerYang14}. 

We omit the precise definition of $\cS^{q}(\Sigma_{g. n})$, but instead mention some key properties.  Firstly, $\cS^{q}(\Sigma_{g, n})$ is an $R[q^{\pm \frac{1}{2}}]$-algebra generated by arcs, loops, and vertices, and reduces to the usual Kauffman bracket skein algebra in the absence of punctures (so that the puncture-skein and puncture-framing relations can be ignored).  For this reason, we will refer to Roger-Yang's $\cS^{q}(\Sigma_{g, n})$ as a \emph{skein algebra}. In addition, $\cS^{q}(\Sigma_{g, n})$ can be identified with $\cC(\Sigma_{g, n})$ when $q = 1$.    

As stated in Theorem \ref{thm:defquantTeich}, establishment of Conjecture \ref{conj:RogerYang} implies that  $\cS^{q}(\Sigma_{g, n})$ is indeed a quantization of $\cT^{d}(\Sigma_{g, n})$, completing Roger and Yang's original goal.  Moreover, many of the results about the curve algebra $\cC(\Sigma_{g, n})$ have consequences for the skein algebra $\cS^{q}(\Sigma_{g, n})$.  For example, it was proved in  \cite[Theorem C]{MoonWong21} that if $\cC(\Sigma_{g, n})$ is an integral domain, then $\cS^{q}(\Sigma_{g, n})$ is also an integral domain.

Conversely, many results about $\cS^{q}(\Sigma_{g, n})$ also apply to $\cC(\Sigma_{g, n})$.  The following two theorems about algebraic properties of $\cC(\Sigma_{g, n})$ were proved for  $\cS^{q}(\Sigma_{g, n})$ with $\CC$-coefficients, but the same proof works just as well for $\ZZ$-coefficients and with $q =1$. 

\begin{theorem}[\protect{\cite[Theorem 2.2]{BKPW16JKTR}}]\label{thm:finitegeneration}
The algebra $\cC(\Sigma_{g, n})$ is finitely generated.
\end{theorem}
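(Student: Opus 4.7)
The plan is to exhibit an explicit finite generating set derived from an ideal triangulation $\Delta$ of $\Sigma_{g,n}$, which exists since $\chi(\Sigma_{g,n})<0$. With $\Delta$ having finitely many edges $e_1,\dots,e_N$, I would take as candidate generating set $G$ the union of the edges $\{e_i\}$, the punctures $\{v_j\}$ together with their formal inverses $\{v_j^{-1}\}$, and a finite collection of simple loops; for instance, a peripheral loop around each puncture together with a finite generating set for the Kauffman bracket skein subalgebra of the closed surface $\overline{\Sigma}_g$. The aim is then to show that every isotopy class of loop, arc, or multicurve lies in the subalgebra generated by $G$, so that the defining relations already cut out a finitely generated algebra.

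The heart of the argument is a double induction on the pair $(i(\gamma,\Delta), s(\gamma))$, where $i(\gamma,\Delta)$ is the minimal geometric intersection number of $\gamma$ with the edges of $\Delta$ and $s(\gamma)$ is the number of self-intersections of $\gamma$. In the base case $i(\gamma,\Delta)=0$ and $s(\gamma)=0$, the curve $\gamma$ sits inside the union of the faces of $\Delta$, so up to framing and puncture-framing relations it is either isotopic to an edge of $\Delta$, to a peripheral loop around a puncture, or to an unpunctured or punctured monogon-bounding arc (which vanishes or equals a monomial in $G$ by Example~\ref{ex:unpuncturedmonogon} and Lemma~\ref{lem:monogon}). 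For the inductive step, pick a crossing of $\gamma$ with some edge $e_i$, or a self-crossing of $\gamma$, and apply the skein relation (or the puncture-skein relation, absorbing a factor of $v_j$ on one side and compensating by $v_j^{-1}\in G$ on the other) to rewrite $\gamma$, possibly after multiplying by an element of $G$, as a sum whose remaining terms have strictly smaller complexity in the lexicographic order on $(i,s)$.

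The chief obstacle is ensuring that each resolution actually decreases complexity and that the recursion cannot produce curves outside a bounded combinatorial family. A naive choice of crossing to resolve could, after one application of the skein relation, reintroduce intersections elsewhere or create new self-crossings, so the complexity need not decrease. The standard fix is to choose an innermost or outermost bigon argument relative to $\Delta$, resolving a crossing whose two smoothings are both provably simpler; this is also the point at which one must handle endpoints at punctures delicately, using the puncture-skein relation and tracking the extra $v_j^{\pm1}$ factors. Once the induction is set up carefully, it terminates and expresses an arbitrary $\gamma$ as a polynomial in the finite set $G$, establishing that $\cC(\Sigma_{g,n})$ is finitely generated.
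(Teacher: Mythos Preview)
Your inductive scheme has a genuine gap at the step where you ``pick a crossing of $\gamma$ with some edge $e_i$'' and resolve it. Such a crossing is not a self-crossing of $\gamma$; it is a transverse intersection of two distinct curves. The skein relation can only be applied to the multicurve $\gamma\cdot e_i$, yielding $\gamma\cdot e_i = A + B$ for the two smoothings. This expresses $\gamma\cdot e_i$, not $\gamma$, in terms of other curves, and since the edge classes $e_i$ are not invertible in $\cC(\Sigma_{g,n})$ you cannot recover $\gamma$ from this identity. Your parenthetical about ``absorbing a factor of $v_j$ and compensating by $v_j^{-1}$'' only applies to the puncture-skein relation, which requires the intersection to occur \emph{at a puncture}; it does nothing for an interior crossing with an edge. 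Consequently the first coordinate $i(\gamma,\Delta)$ of your complexity cannot be reduced by any relation acting on $\gamma$ alone, and the induction stalls once self-crossings have been eliminated. This is not a matter of choosing the crossing more cleverly (innermost bigons, etc.): the obstruction is algebraic, not combinatorial.

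The argument in \cite{BKPW16JKTR}, which the paper summarises just after Theorem~\ref{thm:Cprimefinitegeneration}, avoids this by using a handle decomposition of $\Sigma_{g,n}$ with a disk removed rather than a triangulation. The complexity of a curve is measured by how it traverses the handles, and the reduction lemmas express a complicated curve $\delta$ directly as a polynomial in strictly simpler curves and vertex classes, typically by realising $\delta$ as one resolution in a (puncture-)skein identity whose other terms are already simpler. The key point is that the identities there are arranged so that the complicated curve appears by itself on one side, with no non-invertible factor attached; this is exactly what your triangulation-based complexity fails to provide.
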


We now turn to the $g = 0$ case. Let $C$ be a small circle on $\Sigma_{0, n}$.  We may assume that the $n$ punctures $\{v_{1}, \cdots, v_{n}\}$ lie on $C$ in the clockwise circular order. Let $\underline{\beta_{ij}}$ be the simple arc in the disk bounded by $C$ that connects $v_{i}$ and $v_{j}$. 

\begin{theorem}[\protect{\cite[Theorem 1.1]{ACDHM21}}]\label{thm:presentationC0n}
The algebra $\cC(\Sigma_{0, n})$ is isomorphic to 
\[
	\ZZ[v_{i}^{\pm}, \underline{\beta_{ij}}]_{1 \le i, j \le n}/J,
\]
where $J$ is an ideal generated by 
\begin{enumerate}
\item $\underline{\beta_{ik}} \ \underline{\beta_{j\ell}} = \underline{\beta_{i\ell}}\ \underline{\beta_{jk}} + \underline{\beta_{ij}} \ \underline{\beta_{k\ell}}$ 
for any 4-subset $\{i, j, k, \ell\} \subset [n]$ in cyclic order;
\item $\underline{\gamma_{ij}}^{+} = \underline{\gamma_{ij}}^{-}$;
\item $\underline{\delta} = -2$,
\end{enumerate}
where $\underline{\gamma_{ij}}^{\pm}$ and $\underline{\delta}$ are explicit polynomials in the generators (and have a geometric description). 
\end{theorem}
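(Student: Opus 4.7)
The plan is to establish the presentation by constructing a surjective homomorphism from the abstract algebra to $\cC(\Sigma_{0,n})$ and then proving its injectivity. Define
\[
\phi : \ZZ[v_i^{\pm}, \underline{\beta_{ij}}]_{1 \le i,j \le n}/J \longrightarrow \cC(\Sigma_{0,n})
\]
on generators by sending $v_i \mapsto v_i$ and $\underline{\beta_{ij}} \mapsto \underline{\beta_{ij}}$, where on the right $\underline{\beta_{ij}}$ denotes the isotopy class of the simple chord in the disk bounded by $C$ between $v_i$ and $v_j$. The first task is to verify that the three families of relations already hold in $\cC(\Sigma_{0,n})$: relation (1) follows from applying the puncture-skein relation to the unique transverse crossing of $\underline{\beta_{ik}}$ and $\underline{\beta_{j\ell}}$ when $i,j,k,\ell$ are in cyclic order; relation (3) is a framing/puncture-framing computation akin to Example~\ref{ex:unpuncturedmonogon}; and relation (2), after identifying the geometric $\underline{\gamma_{ij}}^{\pm}$, reduces to a direct skein computation showing that the two ways of resolving the relevant configuration agree.

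Next I would establish surjectivity of $\phi$. Since $\Sigma_{0,n}$ is planar, any arc or loop can be isotoped so that it lies in the disk bounded by $C$ together with a collection of transverse crossings. Applying the skein relation at each ordinary double point and the puncture-skein relation at each puncture-crossing, any arc reduces modulo lower-complexity terms to a monomial in the chords $\underline{\beta_{ij}}$ with coefficients in $\ZZ[v_i^{\pm}]$. A suitable complexity (say, the number of crossings together with a lexicographic order on endpoint data) ensures termination, so every element of $\cC(\Sigma_{0,n})$ lies in the image of $\phi$. This is a standard skein-theoretic induction, and I would not dwell on it.

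The main obstacle is injectivity. My strategy is to build matching $\ZZ[v_i^{\pm}]$-module bases on both sides. On the right, the natural candidate is the set of disjoint unions of non-crossing chords $\underline{\beta_{ij}}$ in the disk, which should span (by the reduction above) and be $\ZZ[v_i^{\pm}]$-linearly independent; independence would follow from a pairing with the $\lambda$-length functions on $\cT^{d}(\Sigma_{0,n})$, together with integrality of $\cC(\Sigma_{0,n})_{\CC}$ as guaranteed by Theorem~\ref{thm:IDimpliesRY}. On the left, I would order the monomials so that the Ptolemy relations in (1) have the crossing monomial $\underline{\beta_{ik}}\underline{\beta_{j\ell}}$ as leading term, and perform a Gröbner-basis reduction: after rewriting via (1), only non-crossing monomials remain, and relations (2)--(3) produce no further collapse among non-crossing monomials (this is where the explicit geometric description of $\underline{\gamma_{ij}}^{\pm}$ and $\underline{\delta}$ is crucial, since one must verify those polynomials, when expanded in the non-crossing basis, do not introduce new linear dependencies).

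The hard part is controlling the combinatorics of this reduction: one must check that every application of the Ptolemy relation strictly decreases the chosen complexity, that overlap ambiguities in the Gröbner sense all resolve via relations already in $J$, and that the relations (2) and (3) are truly the only relations among the non-crossing configurations that arise from resolving loops in the disk. If this combinatorial bookkeeping succeeds, then $\phi$ carries a basis to a basis and is an isomorphism. An alternative route, should the direct Gröbner approach prove unwieldy, is to compare both sides after localizing at $\prod v_i$ with the coordinate ring of a suitable open stratum of $\cT^{d}(\Sigma_{0,n})$, using the faithfulness of $\Phi$ from Conjecture~\ref{conj:RogerYang} (once established via Theorem~\ref{thm:mainthm2}), and then deduce injectivity before localization from integrality and the explicit form of the relations.
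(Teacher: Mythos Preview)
The paper does not give its own proof of this theorem; it is quoted from \cite[Theorem~1.1]{ACDHM21}, and the reader is referred to \cite[Section~4]{ACDHM21} for the definitions of $\underline{\gamma_{ij}}^{\pm}$ and $\underline{\delta}$ and for the argument. So there is no in-paper proof to compare your attempt against.

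On the substance of your proposal, the overall architecture---define $\phi$ on generators, check the relations hold in $\cC(\Sigma_{0,n})$, prove surjectivity by skein reduction, then prove injectivity by matching bases---is the natural one. A few points deserve comment. First, a small slip: relation (1) comes from the \emph{ordinary} skein relation at the interior crossing of $\underline{\beta_{ik}}$ and $\underline{\beta_{j\ell}}$, not the puncture-skein relation. Second, you invoke Theorem~\ref{thm:IDimpliesRY} as guaranteeing integrality of $\cC(\Sigma_{0,n})_{\CC}$, but that theorem runs the other way (integrality implies injectivity of $\Phi$, not conversely); and your alternative route via Theorem~\ref{thm:mainthm2} inverts the logical order, since \cite{ACDHM21} predates the present paper and Theorem~\ref{thm:mainthm2} is established here only \emph{after} Theorem~\ref{thm:presentationC0n} has been imported as input. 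Third, and most substantively, the real work---as you yourself flag---is the confluence/diamond-lemma analysis showing that after Ptolemy reduction the relations (2) and (3) account for exactly the residual dependencies among non-crossing monomials, and no more. Your sketch does not carry this out, and without it the injectivity half remains a plan rather than a proof.
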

For definitions of $\underline{\gamma_{ij}}^{\pm}$ and $\underline{\delta}$ and formulas in terms of $\underline{\beta_{ij}}$, see \cite[Section 4]{ACDHM21}.

\subsection{A useful variation of $\cC(\Sigma_{g, n})$}

\begin{definition}\label{def:Cprime}
Let $\cC(\Sigma_{g, n})' \subset \cC(\Sigma_{g, n})$ be the subalgebra generated by the following elements:
\begin{enumerate}
\item Isotopy classes of loops;
\item $\underline{\beta}$, \, $v\underline{\beta}$, \, $w\underline{\beta}$, and $vw\underline{\beta}$, where $\underline{\beta}$ is an arc connecting (possibly non-distinct) vertices $v$ and $w$.
\end{enumerate}
\end{definition}

For any coefficient ring $R$, set $\cC(\Sigma_{g, n})_{R}' := \cC(\Sigma_{g, n})' \otimes_{\ZZ}R \subset \cC(\Sigma_{g, n})_{R}$.
Later, we will need a slight extension/variation of Theorem \ref{thm:finitegeneration}. 

\begin{theorem}\label{thm:Cprimefinitegeneration}
For any coefficient ring $R$, the algebra $\cC(\Sigma_{g, n})_{R}'$ is finitely generated. 
\end{theorem}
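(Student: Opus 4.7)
My plan is to extract a finite generating set of $\cC(\Sigma_{g,n})'$ from a finite generating set of $\cC := \cC(\Sigma_{g,n})$ provided by Theorem \ref{thm:finitegeneration}. Since $\cC(\Sigma_{g,n})_R' = \cC(\Sigma_{g,n})' \otimes_\ZZ R$, it suffices to establish finite generation over $\ZZ$. I will choose a finite generating set of $\cC$ consisting of loops $\alpha_1,\ldots,\alpha_m$, arcs $\underline{\beta_1},\ldots,\underline{\beta_k}$, and $v_1^{\pm 1},\ldots,v_n^{\pm 1}$, and form the candidate set
\[
S' := \{\alpha_i\}_{i=1}^m \;\cup\; \bigcup_{j=1}^k \{\underline{\beta_j},\; v_{p_j}\underline{\beta_j},\; v_{q_j}\underline{\beta_j},\; v_{p_j}v_{q_j}\underline{\beta_j}\} \;\subset\; \cC(\Sigma_{g,n})',
\]
where $v_{p_j}, v_{q_j}$ are the endpoints of $\underline{\beta_j}$ (collapsed if they coincide). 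The claim I would establish is that $\cC(\Sigma_{g,n})' = \langle S' \rangle$.

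The main technical tool is a $\ZZ^n$-grading on $\cC$ defined by $\deg v_l = -2 e_l$, $\deg \underline{\beta} = e_p + e_q$ for an arc with endpoints $v_p, v_q$, and $\deg \alpha = 0$ for loops; inspection of the four defining relations of $\cC$ shows that each is $\ZZ^n$-homogeneous. Given any generator of $\cC(\Sigma_{g,n})'$ of the form $g = v_x^a v_y^b \underline{\alpha}$ with $a, b \in \{0, 1\}$ and $\underline{\alpha}$ an arbitrary arc with endpoints $v_x, v_y$, I would expand it as a polynomial $g = \sum_\mu c_\mu v^{I_\mu} M_\mu$ in the finite generators of $\cC$, with $M_\mu$ a monomial in $\{\alpha_i\} \cup \{\underline{\beta_j}\}$ and $I_\mu \in \ZZ^n$. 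Homogeneity forces $-2 I_\mu + d(M_\mu) = \deg g$ in every summand; combined with $d(M_\mu) \ge 0$, a quick case analysis on $\deg g$ gives $I_\mu \ge 0$ componentwise in all summands, with the sole exception of $g = \underline{\alpha}$ for a plain self-loop arc. Once $I_\mu \ge 0$ holds, each summand $v^{I_\mu} M_\mu$ can be rewritten as a product in $\langle S' \rangle$ by distributing the $v_l^{I_{\mu, l}}$ factors over arc endpoints at $v_l$: an arc $\underline{\beta_j}^{a_j}$ can absorb up to $a_j$ (resp.\ $2a_j$ when $\underline{\beta_j}$ is a self-loop at $v_l$) copies of $v_l$ through the $S'$-elements $v_p \underline{\beta_j}$ and $v_p v_q \underline{\beta_j}$, and the grading identity guarantees $I_{\mu, l} \le d(M_\mu)_l$.

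The main obstacle is the remaining case of a plain self-loop arc $\underline{\alpha}$ at a puncture $v$, where $\deg \underline{\alpha} = 2 e_v$ permits summands with $I_{\mu, v} = -1$ in the expansion. I plan to handle this via iterated (puncture-)skein reductions in the spirit of Lemma \ref{lem:monogon}, which expresses a monogon-bounding self-loop $\gamma$ as $\gamma = w e^2 \in \langle S' \rangle$. More generally, repeated application of the skein and puncture-skein relations should express any plain self-loop arc as a polynomial in $\langle S' \rangle$, possibly after enlarging the finite generating set of $\cC$ to include a particular family of self-loop arcs sufficient to generate all others via Markov-type moves (an adjustment needed notably for exceptional surfaces such as $\Sigma_{1,1}$ where every arc is a self-loop). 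Once this final case is handled, the identification $\cC(\Sigma_{g,n})' = \langle S' \rangle$ is complete and the finite generation follows.
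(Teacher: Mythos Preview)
Your approach is genuinely different from the paper's. The paper does not introduce a new argument at all: it revisits the complexity-reduction proof of Theorem~\ref{thm:finitegeneration} from \cite{BKPW16JKTR} and checks, lemma by lemma, that the skein identities used in that recursion never essentially involve $v_i^{-1}$ (the single appearance of $v^{-1}$ cancels against an earlier factor of $v$ when substituted back). Hence the algorithm that reduces an arbitrary curve to a finite set already runs entirely inside $\cC(\Sigma_{g,n})'$, and finite generation follows immediately.

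Your grading idea is natural and handles most generators, but the self-loop case is a genuine gap rather than a detail, and in fact there are \emph{two} problems where you flag only one. First, as you note, for a plain self-loop $\underline{\alpha}$ at $v$ the expansion may contain monomials with $I_{\mu,v}=-1$. Second, for the doubly-decorated self-loop $g = v^2\underline{\alpha}$ (also a generator of $\cC(\Sigma_{g,n})'$) one has $(\deg g)_v = -2$, and the grading identity gives $I_{\mu,v} = (d(M_\mu)_v + 2)/2$. This is nonnegative, but your distribution step requires $I_{\mu,v} \le d(M_\mu)_v$, i.e.\ $d(M_\mu)_v \ge 2$, and nothing rules out $d(M_\mu)_v = 0$. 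In that situation the monomial carries a bare factor of $v$ with no arc-end available to absorb it, and $v$ itself is not among the elements of $S'$ (nor is it obviously in $\cC(\Sigma_{g,n})'$). So the claim ``the grading identity guarantees $I_{\mu,l}\le d(M_\mu)_l$'' is false for this generator.

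Your proposed fix for the acknowledged gap---enlarging the generating set by finitely many self-loop arcs and reducing all others to those via skein and puncture-skein moves staying in $\cC(\Sigma_{g,n})'$---is precisely what the paper's approach accomplishes, namely a recursion on geometric complexity that one verifies avoids $v^{-1}$. To carry it out you would have to reproduce the case analysis of \cite{BKPW16JKTR}, at which point the grading detour for the non-self-loop generators, while elegant, no longer shortens the argument.
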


The proof is identical to that of \cite[Theorem 2.2]{BKPW16JKTR}. More specifically, one uses a generalized handle decomposition of $\Sigma_{g, n}$ with a disk removed. The complexity of a curve is defined based on how many times and in what manner a minimal representation of the curve traverses the handles \cite[Section 3.1]{BKPW16JKTR}. By application of skein identities, it is shown that any curve can be recursively written as lower-complexity curves  \cite[Lemmas 3.1--3.4]{BKPW16JKTR}. Importantly, none of the skein identities in the recursive steps use the formal inverses of vertices. In particular, the skein identities from \cite[Lemmas 3.1 and 3.2]{BKPW16JKTR}) involve only undecorated arcs of the form $\underline{\beta}$, and those for \cite[Lemma 3.3]{BKPW16JKTR} uses arcs of the form $\underline{\beta}$ and $vw \underline{\beta}$. For \cite[Lemma 3.4]{BKPW16JKTR}, one identity (first identity on \cite[p.10]{BKPW16JKTR} ) involves $v^{-1}$.  However, the recursive step comes from substituting it into a previous equation (last identify on \cite[p.9]{BKPW16JKTR}), in a term with a factor of  $v$. Because of the cancellation, the recursive step can be written in a form involving only undecorated arcs.

\begin{remark}
Later, we will see that the cluster algebra $\cA(\Sigma_{g, n})$ (to be defined in Section \ref{sec:clusteralgebrasurface}) can be understood as a subalgebra generated by `tagged' arc classes by Compatibility Lemma. On the other hand, the classical limit  ($q=1$) of the original Kauffman bracket skein algebra  \cite{Przytycki91, Turaev91} is a subalgebra of $\cC(\Sigma_{g, n})$ generated by loop classes. So, one may interpret $\cC(\Sigma_{g, n})'$ as the subalgebra of $\cC(\Sigma_{g, n})$ generated by the image of the cluster algebra $\cA(\Sigma_{g, n})$ and the usual Kauffman bracket skein algebra. 
\end{remark}


\section{Cluster algebra from surfaces}\label{sec:clusteralgebrasurface}

We review the definition of the cluster algebra $\cA(\Sigma_{g, n})$ constructed from a punctured surface $\Sigma$, as introduced by Fomin, Shapiro, and Thurston in \cite{FominShapiroThurston08}. 

\subsection{Definition of cluster algebras}\label{ssec:clusteralgebra}

We begin by noting that we will not need the definition of cluster algebras in full generality, which can be found for example in \cite{FominZelevinsky02}. We will restrict to the case of constant coefficient, skew-symmetric exchange matrix, and  no frozen variables. The only minor extension is that we allow more general base ring including finite field, while in many literature a cluster algebra is defined over $\ZZ$, $\QQ$, $\RR$, or $\CC$. Essentially  the choice of  coefficient ring  does not significantly impact the theory \cite[Section 2]{BMRS15}.

Let $R$ be an integral domain. Let $\cF$ be a purely transcendental finite extension of $Q(R)$, the field of fraction of $R$. A \emph{seed} is a pair $(\bx, B)$, where $\bx = \{x_{1}, \ldots, x_{m}\}$ is a free generating set for $\cF$ as a field over $Q(R)$ and $B=(b_{ij})$ is a skew-symmetric $ m \times m$ integral matrix. $B$ is called the \emph{exchange matrix}, the set $\bx$ is the \emph{cluster}, and its elements $x_{i}$ are the \emph{cluster variables} of the seed.

For a seed $(\bx, B)$ and  $k \in \{1, \ldots, m \}$, a  \emph{mutation  in the direction of $k$} is an operation that produces another seed $\mu_{k} ( \bx, B) = (\bx', B')$ where
\begin{enumerate}
\item  $\bx' = \{ x_{1}', \ldots, x_{m}'\}$ is such that $x_{k}'$ is defined by the \emph{exchange relation}
\[ x_{k} x_{k}' = \prod_{b_{jk}>0} x_{j}^{b_{jk}} + \prod_{b_{jk}<0} x_{j}^{-b_{jk} }\]
and all other cluster variables are identical, so $x_{i} = x_{i}'$ for $i \neq k$;
\item $B' = (b_{ij}')$ is defined by  
\begin{equation}\label{eqn:matrixmutation}
	b_{ij}' = \begin{cases}-b_{ij}, & \mbox{if } i = k \mbox{ or } j = k,\\
	b_{ij} + \frac{1}{2}(|b_{ik}|b_{kj} + b_{ik}|b_{kj}|), & \mbox{otherwise.}\end{cases}
\end{equation}
\end{enumerate}
Sometimes we notate it as $\mu_{k}(B) = B'$.  It is straightforward to check that a mutation is involutive.   

Since a mutation of a seed produces another seed, repeated mutations can be performed following any sequence of indices $1, \ldots, m$.  
We say that two seeds $(\bx, B)$ and $(\by, C)$ are \emph{mutation equivalent} and write $(\bx, B) \sim (\by, C)$ if one seed can be obtained from the other by a sequence of mutations. 

\begin{definition}\label{def:clusteralg}
The \emph{cluster algebra} $\cA(\bx, B)$ is the $R$-subalgebra of the ambient field $\cF$ generated by 
\[
	\bigcup\limits_{(\by, C) \sim (\bx, B)} \by,
\]
the cluster variables of seeds that are mutation equivalent to a seed $(\bx, B)$. Since mutation equivalent seeds produce the same cluster algebra, we write $\cA$ instead of $\cA(\bx, B)$ when the choice of initial seed may be safely suppressed. When we need to specify the coefficient ring, we use the notation $\cA_{R}$ for $\cA$. 
\end{definition}

A simplicial complex, called the \emph{cluster complex} of $\cA = \cA(\bx, B)$, is often used to describe the relationships between the cluster variables used to generate it.  In particular, the vertices of the cluster complex are the cluster variables $\displaystyle \bigcup\limits_{(\by, C) \sim (\bx, B)} \by$ that generate $\cA$, and there is a $k$-simplex whenever  $k$ cluster variables belong to the same cluster.  Thus each seed in a cluster algebra gives rise to a maximal simplex in the cluster complex.   The \emph{exhange graph}  is the dual graph, where the vertices are the seeds, and there is an edge between two seeds if they are mutations of each other.   So by definition, the exchange graph of a cluster algebra must be an $m$-regular, connected graph.  

By the \emph{Laurent phenomenon} \cite[Theorem 3.1]{FominZelevinsky02}, for any $x_{i} \in \bx$ and an equivalent seed $(\by = \{y_{1}, y_{2}, \cdots, y_{m}\}, C) \sim (\bx, B)$, 
\[
	x_{i} \in R[y_{1}^{\pm}, y_{2}^{\pm}, \cdots, y_{m}^{\pm}] \subset \cF.
\]

\begin{definition}\label{def:upperclusteralgebra}
For a cluster algebra $\cA = \cA(\bx, B) \subset \cF$, the \emph{upper cluster algebra} $\cU$ is defined by
\[
	\cU := \bigcap_{(\by, C) \sim (\bx, B)}R[y_{1}^{\pm}, y_{2}^{\pm}, \cdots, y_{m}^{\pm}] \subset \cF.
\]
\end{definition}

The Laurent phenomenon tells that $\cA \subset \cU$. In general they do not coincide. The upper cluster algebra $\cU$ behaves better than $\cA$; for example, $\cU$ is an integrally closed domain if $R$ is \cite[Lemma 2.1]{BMRS15}. However, the computation of $\cU$ and the question of whether $\cA = \cU$ or not are in general difficult. For a partial criterion for $\cA = \cU$, see \cite{Muller13}.

\subsection{Definition of the cluster algebra of a surface}

In this paper, we focus exclusively on cluster algebras associated to a punctured surface $\Sigma_{g, n}$. The cluster algebra $\cA(\Sigma_{g, n})$ is essentially the algebra generated by isotopy classes of arcs on the surface~$\Sigma_{g, n}$.  Each cluster should come from the arc classes in a maximal compatible set; in other words, the edges of a triangulation should form a cluster. A mutation should correspond to a flip of an edge of the triangulation. Although this idea is sufficient to define $\cA(\Sigma_{g, n})$, the intuitive picture is not complete as stands, because not every arc in an ordinary triangulation is flippable if the triangulation contains a self-folded triangle. This problem was resolved  in \cite{FominShapiroThurston08} by introducing tagged arcs.  

We begin with a review of ordinary triangulations, and how the data from a single triangulation without a self-folded triangle is sufficient to define a cluster algebra $\cA(\Sigma_{g, n})$.  We then introduce tagged triangulations, which will fully describe the correspondence between cluster variables and tagged arcs. The results in \cite{FominShapiroThurston08} also apply to surfaces with boundary and marked points on the boundary, but we do not need that generality here. 

\subsubsection{Ordinary Triangulations}\label{ssec:notation}

As in Section \ref{sec:arcalgebra}, we denote a punctured surface without boundary by $\Sigma_{g, n} = \overline{\Sigma}_{g} \setminus V$, where $V = \{v_{1}, \cdots, v_{n}\}$. We assume that $n \ge 1$, and exclude $\Sigma_{0, n}$ for $n \le 3$. 

Recall an arc of $\Sigma_{g, n} = \overline{\Sigma}_{g} \setminus V$ is an immersion $\underline{\alpha} : [0, 1] \to \overline{\Sigma}_{g}$ such that $\underline{\alpha}$ embeds $(0, 1)$ in $\Sigma_{g, n}$ and $\underline{\alpha}$ takes the endpoints $\{0, 1\}$ to the punctures $V$. The set of isotopy classes of arcs connecting two punctures in $\Sigma_{g, n}$ will be denoted by $\bA^{\circ}(\Sigma_{g, n})$. Two arcs are said to be \emph{compatible} if they are the same, or if they do not intersect except at the punctures.  A maximal collection of distinct, pairwise compatible isotopy classes of arcs forms an \emph{ideal triangulation} $ \cT$ on $\Sigma_{g, n}$.  The arcs in a triangulation are referred to as \emph{edges}, and the set of edges is denoted  by $E$.  Because of maximality,  $E$ separates $\Sigma_{g, n}$ into a set of triangles, which is denoted by $T$.  Recall that $n = |V|$, and from now on, we let $m = |E|$.

A \emph{flip} is an operation that removes an arc from a triangulation $\cT$, replaces it by another compatible arc, so results in another triangulation $\cT'$.  So  $\cT$ and $\cT'$ share all arcs except one.   Note that not every arc in a triangulation is flippable; in particular, the folded edge in a self-folded triangle is not flippable.  However, there is a finite sequence of flips that transforms any triangulation into one without self-folded triangles, and more generally,  any two triangulations can be connected by finitely many flips.    

Let the \emph{arc complex} $\Delta^{\circ}(\Sigma_{g, n})$ be the abstract simplicial complex where a $k$-simplex is a collection of $k$ distinct, mutually compatible arcs in $\bA^{\circ}(\Sigma_{g, n})$.  Thus each vertex is an isotopy class of an arc, and a maximal simplex corresponds to a triangulation $\cT$. Its \emph{dual graph} we denote by $\bE^{\circ}(\Sigma_{g, n})$. Equivalently,  $\bE^{\circ}(\Sigma_{g, n})$ is the graph whose vertices are the ideal triangulations of $\Sigma_{g, n}$ and two vertices are connected if and only if the ideal triangulations are related by a flip. $\Delta^{\circ}(\Sigma_{g, n})$ is connected in codimension-one, and $\bE^{\circ}(\Sigma_{g, n})$ is connected, with each vertex degree at most $m$.  

\subsubsection{Cluster algebra from an ordinary triangulation}
The combinatorial data from an ordinary triangulation can be encoded using a matrix, which we will define using puzzle pieces. Figure \ref{fig:puzzle} shows three ``puzzle pieces'' which are intended to be glued together along their boundary edges in order to construct triangulations of surfaces.  Figure \ref{fig:puzzle4} depicts a triangulation of the four-punctured sphere  $\Sigma_{0, 4}$, where the exterior of three self-folded triangles is another triangle, which is not drawn in but which should be understood to be a part of the figure.   We sometimes refer to the triangulation in Figure \ref{fig:puzzle4} as a fourth puzzle piece, even though it is not meant to be glued to any other puzzle piece.     The matrix associated to the puzzle pieces are also given in  Figures  \ref{fig:puzzle} and  \ref{fig:puzzle4}.   Notice that there is one row and column for each edge in the puzzle piece, and all four matrices are skew-symmetric.

As shown in \cite[Section 4]{FominShapiroThurston08}, every triangulation $\cT$ of $\Sigma_{g, n}$ can be obtained from gluing puzzle pieces of the four types depicted in Figures \ref{fig:puzzle} and \ref{fig:puzzle4}.   Moreover, there is a well-defined \emph{exchange matrix}  $B = B_{\cT} = (b_{ij})$ that is the $m \times m$ matrix whose rows and columns are indexed by the edges of the triangulation, constructed as the sum of all minor matrices obtained from some set of puzzle pieces which can be used to construct $\cT$. Since an edge of a triangulation can be contained in at most two puzzle pieces, the entries of the exchange matrix must satisfy $-2 \le b_{ij} \le 2$ for all $i, j$.  We refer the reader to  \cite{FominShapiroThurston08} for details as well as worked examples.  

\begin{figure}
\begin{tabular}{ccc}
\includegraphics[height=1in]{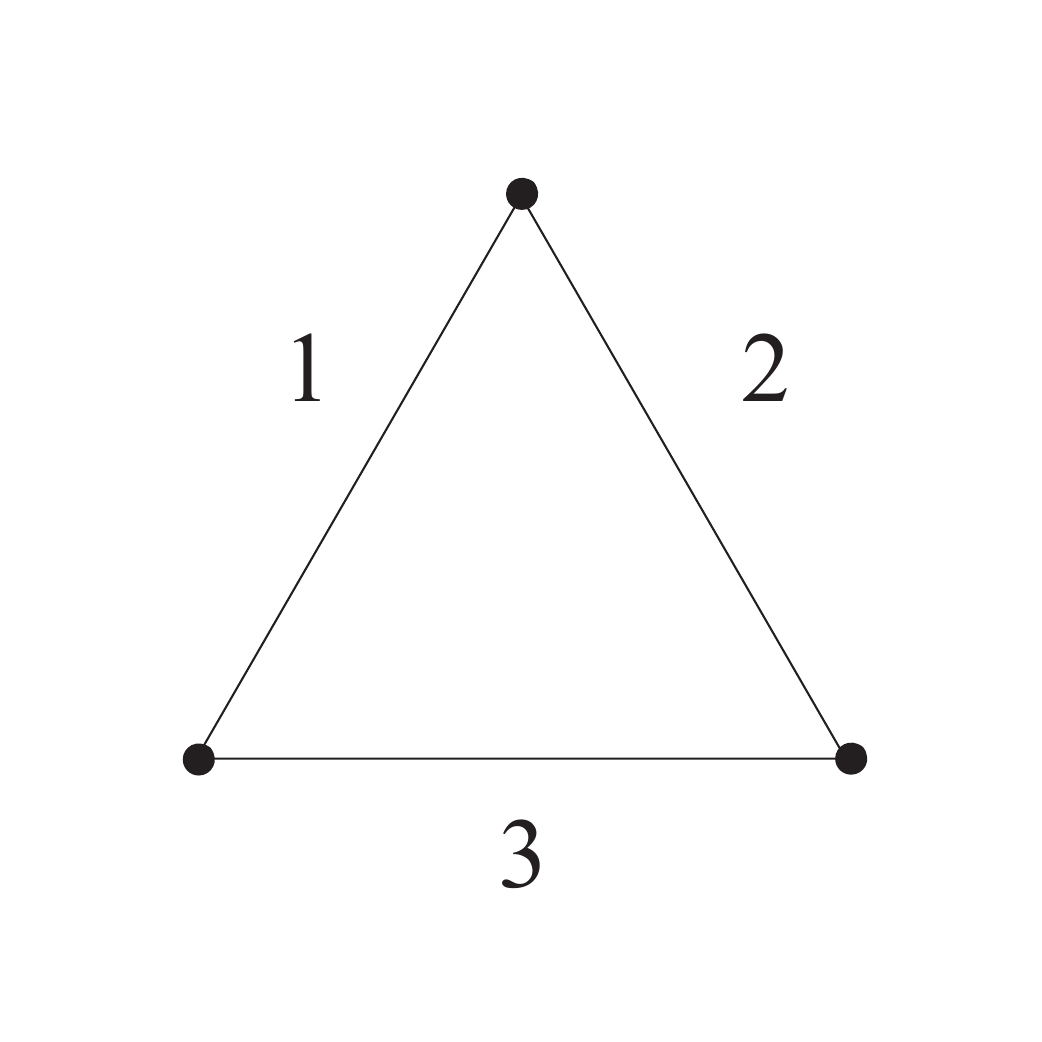}&
\includegraphics[height=1in]{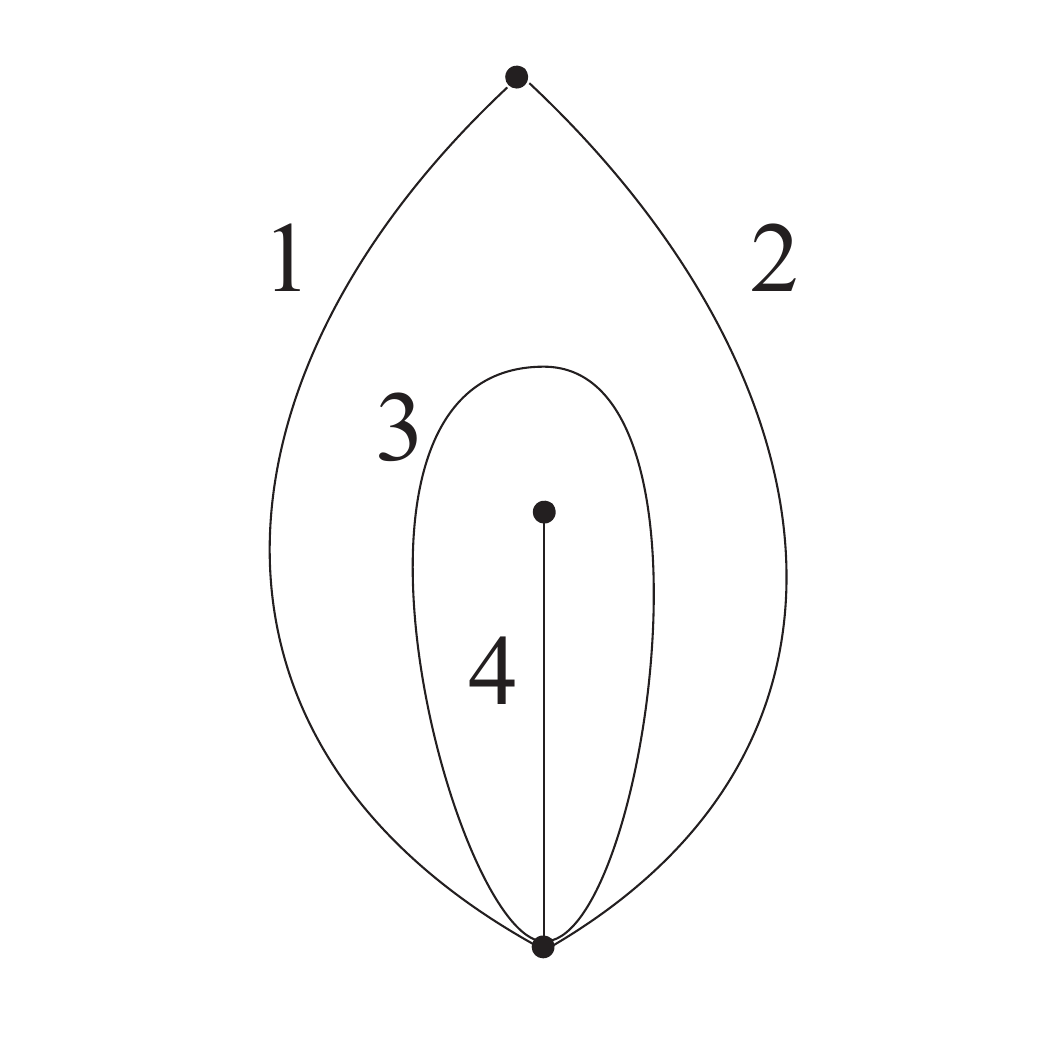}&
\includegraphics[height=1in]{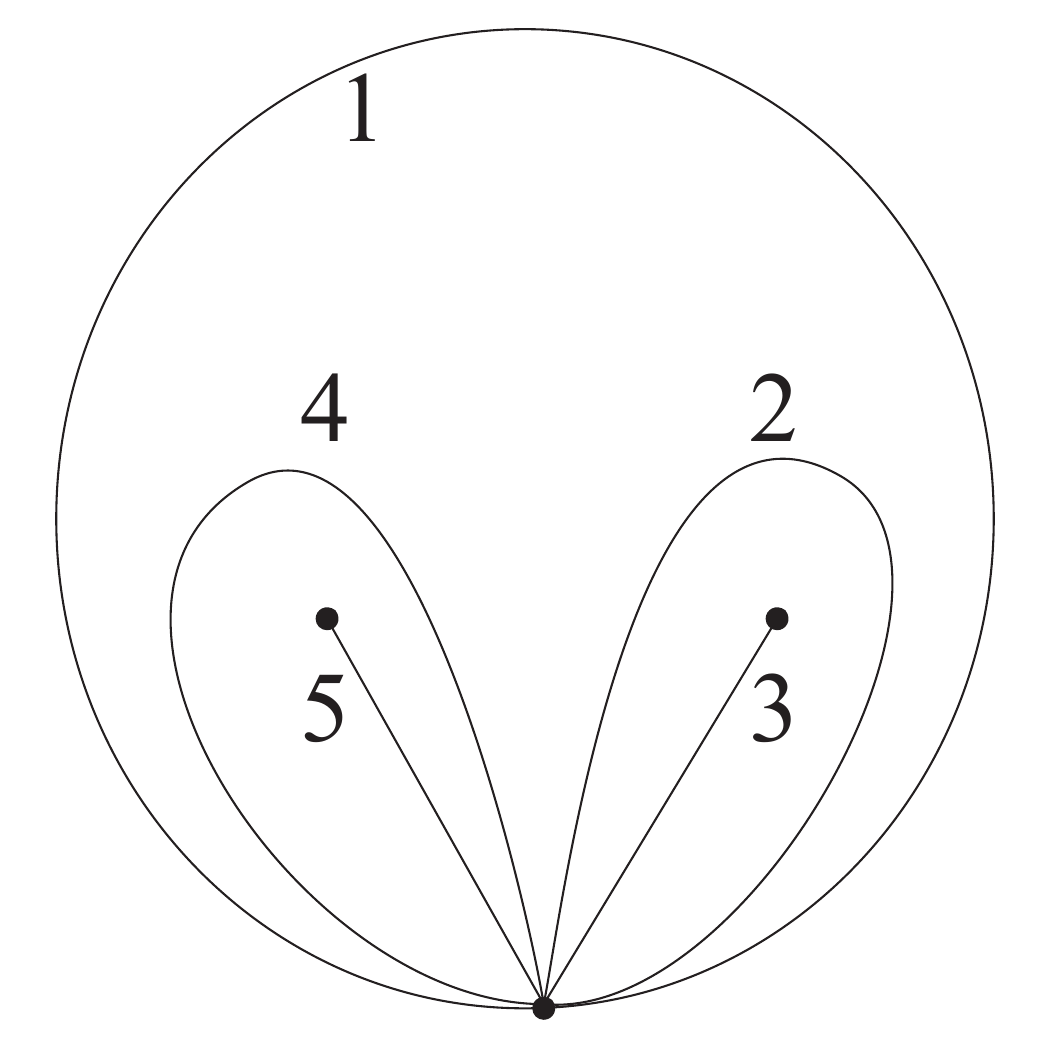}\\
$\left[\arraycolsep=2pt\begin{array}{rrr}  0&1&-1\\-1&0&1\\1&-1&0 \end{array}\right]$&
$\left[\arraycolsep=2pt\begin{array}{rrrr}0&1&-1&-1\\-1&0&1&1\\1&-1&0&0\\1&-1&0&0\end{array}\right] $&
$\left[\arraycolsep=2pt\begin{array}{rrrrr}0&1&1&-1&-1\\-1&0&0&1&1\\-1&0&0&1&1\\1&-1&-1&0&0\\1&-1&-1&0&0\end{array}\right]$
\end{tabular}
\caption{Three puzzle pieces and their associated matrix minors}
\label{fig:puzzle}
\end{figure}

\begin{figure}
\begin{tabular}{cc}
\begin{minipage}{1.3in}\includegraphics[height=1.3in]{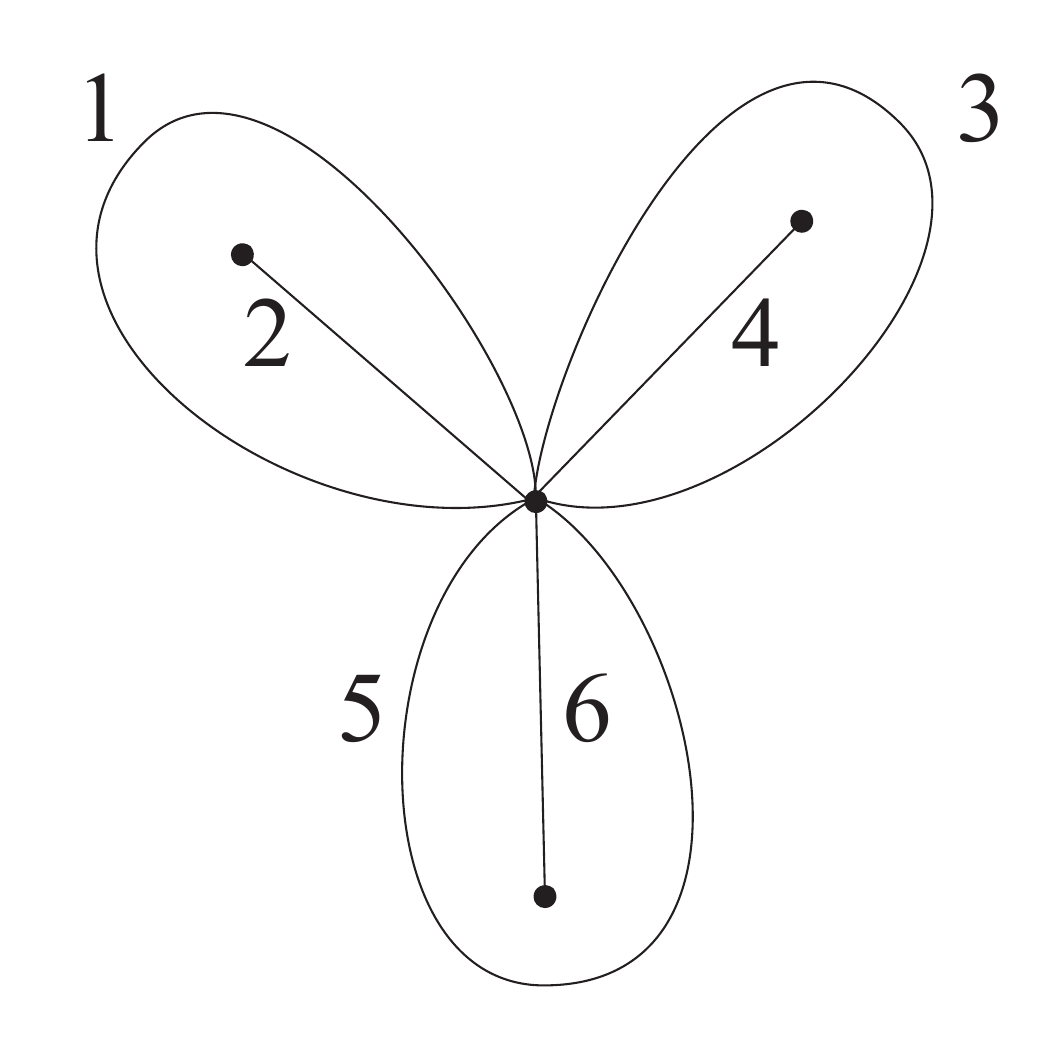}\end{minipage} & 
$\arraycolsep=2pt\left[\begin{array}{rrrrrr} 0&0&-1&-1&1&1\\0&0&-1&-1&1&1\\1&1&0&0&-1&-1\\1&1&0&0&-1&-1\\-1&-1&1&1&0&0\\-1&-1&1&1&0&0 \end{array}\right]$
\end{tabular}
\caption{The fourth puzzle pieces and their associated matrix minors}
\label{fig:puzzle4}
\end{figure}
 
Observe that the exchange matrix $B_{\cT}$ is skew-symmetric, since the minor matrices obtained from the puzzle pieces are skew-symmetric.  Thus, we may define the \emph{seed from the triangulation $\cT$} to be the pair $(E_{\cT}, B_{\cT})$, where $E_{\cT}$ is the set of edges of a triangulation $\cT$ and $B$ is its exchange matrix.  

\begin{proposition}[\protect{\cite[Proposition 4.8]{FominShapiroThurston08}}] \label{prop:flip}
Suppose that the $k$-th edge of a triangulation  $\cT$ is flippable, and let $\cT'$ be the result of flipping that edge.  Then the exchange matrix for $\cT'$ is the exchange matrix for $\cT$ mutated in the direction $k$, i.e.,  $B_{\cT'} = \mu_{k}(B_{\cT})$. 
\end{proposition}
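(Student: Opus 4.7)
The plan is to verify the identity $B_{\cT'} = \mu_k(B_\cT)$ by a local computation, exploiting the fact that the puzzle piece construction of the exchange matrix is inherently local. First I would observe that each edge of $\cT$ is contained in at most two puzzle pieces, and the flip of edge $k$ only alters the triangulation inside a small neighborhood $N$ consisting of the two triangles on either side of $k$ (together with any self-folded structure these triangles participate in). Outside of $N$, the puzzle piece decomposition is identical for $\cT$ and $\cT'$, so any matrix entry $b_{ij}$ with neither $i$ nor $j$ meeting $N$ is unchanged. On the mutation side, from formula~\eqref{eqn:matrixmutation}, entries $b_{ij}'$ differ from $b_{ij}$ only when $i = k$, $j = k$, or both $b_{ik}$ and $b_{kj}$ are nonzero; the latter also forces $i$ and $j$ to meet $N$. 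Thus both sides agree away from $N$, and it suffices to verify equality of the submatrices indexed by edges incident to $N$.

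Next I would enumerate the possible local configurations around a flippable edge $k$. Since the folded edge of a self-folded triangle is never flippable, the two triangles on either side of $k$ are distinct, and up to symmetry there are only a few cases: (A) $k$ is a diagonal of a quadrilateral formed by two type~1 pieces; (B) $k$ is an outer edge of a type~2 puzzle piece, adjacent to a self-folded triangle; (C) $k$ lies in the shared boundary of a configuration involving two self-folded triangles (type~3); and (D) $k$ sits inside the type~4 piece arising for $\Sigma_{0,4}$. For each configuration I would draw the local triangulation $\cT$ and $\cT'$, read off the puzzle pieces of each, sum the corresponding minor contributions from Figures~\ref{fig:puzzle} and~\ref{fig:puzzle4}, and compare with the matrix $\mu_k(B_\cT)$ computed directly from the mutation formula. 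In case (A) this is the classical flip in a quadrilateral and the check is immediate; the other cases reduce to finite verifications involving at most six edges.

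The main obstacle will be case (B), and the variants where $\cT$ or $\cT'$ has a self-folded triangle at $k$ but not both. Flipping an outer edge of a self-folded triangle ``moves'' the enclosed puncture to the other side, so the puzzle pieces supporting $B_{\cT}$ and $B_{\cT'}$ near $N$ are combinatorially different even though the underlying edges persist. Accounting for the contributions of both the type~1 and type~2 minors that sum to produce each local entry, one has to check that the cancellation and reinforcement matches precisely the piecewise-defined mutation rule $\tfrac{1}{2}(|b_{ik}|b_{kj} + b_{ik}|b_{kj}|)$. Case (D) is similar in spirit but the bookkeeping is heavier since the type~4 piece itself must be broken up into overlapping type~2 pieces after the flip. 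In every case the computation is finite and the identity holds on the nose, so once the cases are organized the remaining work is routine.
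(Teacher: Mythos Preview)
The paper does not prove this proposition at all: it is quoted verbatim from \cite[Proposition 4.8]{FominShapiroThurston08} and no argument is given here. So there is nothing in the present paper to compare your proposal against.

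That said, your outline is a reasonable sketch of the standard verification, and it is essentially how the original reference proceeds: reduce to a local computation around the flipped edge and check the finitely many puzzle-piece configurations against the mutation rule~\eqref{eqn:matrixmutation}. One small caution on your case~(D): the fourth puzzle piece is by definition a stand-alone triangulation of $\Sigma_{0,4}$ with no exterior edges, so it is never glued to anything else; flipping an edge inside it produces the configuration of Example~\ref{ex:exchangematrixtaggedtriangulation} (two type~B pieces glued along two edges), not a decomposition into ``overlapping type~2 pieces.'' Apart from that labeling issue, the plan is sound and the remaining work is, as you say, a finite check.
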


Since any two triangulations of $\Sigma_{g, n}$ are related by a sequence of flips, seeds from any two triangulations of $\Sigma_{g, n}$ are related by a sequence of mutations and hence are mutation equivalent.  Hence, we have:

\begin{definition}
Let the \emph{cluster algebra of $\Sigma_{g, n}$} be defined as $\cA(\Sigma_{g, n})= \cA(E_{\cT}, B_{\cT})$. Then $\cA(\Sigma_{g, n})$ is generated by the edges of triangulations of $\Sigma_{g, n}$ and hence is independent of the initial choice of triangulation $\cT$.
\end{definition}

However, the arcs are insufficient to describe all cluster variables. In a cluster algebra, we necessarily are able to mutate along every edge of a triangulation, but when the surface $\Sigma_{g, n}$ admits a triangulation with self-folded triangles, not every edge is flippable. In other words, for some vertex in $\bE^{\circ}(\Sigma_{g, n})$, the degree might be strictly smaller than $m$, while the exchange graph of $\cA(\Sigma_{g, n})$ has to be $m$-regular. So, we can only in general say that $\Delta^{\circ}(\Sigma_{g, n})$ is a subcomplex of the cluster complex, and  $\bE^{\circ}(\Sigma_{g, n})$ is a subgraph of the cluster algebra's exchange graph. To fill in this gap,  Fomin, Shapiro, and Thurston \cite{FominShapiroThurston08} introduced a generalization of ordinary arcs which we describe next.

\subsubsection{Tagged Triangulations}\label{ssec:taggedtriangulations}

\begin{definition}\label{def:taggedarc}
 A \emph{tagged arc} $\alpha$ on $\Sigma_{g, n}$ is an arc $\underline{\alpha}$ on $\Sigma_{g, n}$ along with one of two decorations, \emph{plain} or \emph{notched}, at each of the two ends of $\underline{\alpha}$ such that:
\begin{enumerate}
\item $\underline{\alpha}$ does not cut a one-punctured monogon; 
\item if both ends of the arc are at the same vertex, then they have the same decoration. 
\end{enumerate}
The ordinary arc $\underline{\alpha}$ is the \emph{underlying arc} of the tagged arc $\alpha$. The decoration of plain or notched at an end of a tagged arc is referred to as the \emph{tag} at that end, or at the corresponding vertex. The set of isotopy classes of tagged arcs is denoted by $\bA^{\bowtie}(\Sigma_{g, n})$. Naturally $\bA^{\circ}(\Sigma_{g, n}) \subset \bA^{\bowtie}(\Sigma_{g, n})$. 
\end{definition}

Many concepts and constructions for arcs can be extended to tagged arcs. Recall that two ordinary arcs are compatible if, up to isotopy, they are either the same or disjoint except at the vertices.  

\begin{definition}\label{def:taggedcompatibility}
 If tagged arcs $\alpha$ and $\beta$ satisfy the following conditions:
\begin{enumerate}
\item the underlying arcs are $\underline{\alpha}$ and $\underline{\beta}$ are compatible; and
\item in the case that $\underline{\alpha} = \underline{\beta}$, then $\alpha$ and $\beta$ have the same tag on at least one of the shared vertices; 
\item in the case that $\underline{\alpha} \ne \underline{\beta}$ and they share a vertex $v$, then $\alpha$ and $\beta$ have the same tag at~$v$.
\end{enumerate}
then we say that $\alpha$ and $\beta$ are \emph{compatible}. 
\end{definition}

It follows from the definition that, if $\alpha$ and $\beta$ are compatible tagged arcs whose underlying arcs are not the same but share both vertices, then $\alpha$ and $\beta$ must have the same tag at each vertex.   For example, on a one-punctured surface, all compatible arcs share a vertex, and hence all ends of compatible arcs must have the same tag.

\begin{definition}\label{def:taggedtriangulation}
A \emph{tagged triangulation} $\cT^{\bowtie}$ is a maximal collection of compatible, distinct tagged arcs.  
\end{definition}

If we take the arcs of an ordinary triangulation $\cT$ and tag all of the ends plainly, then we obtain a tagged triangulation. However, the converse is not true; it is possible that the underlying curves of a tagged triangulation $\cT^{\bowtie} = \{\alpha_{i}\}$ do not form an ordinary triangulation of $\Sigma_{g, n}$. In particular, tagged triangulations may cut out bigons as pictured on the right of Figure \ref{fig:dangle}.  Because such bigons appear often in tagged triangulations, we have the following language for describing them.   

\begin{definition}\label{def:dangle}
Let $v$ and $w$ be two distinct vertices. A \emph{dangle} $d_{v}^{w}$ is a bigon with vertices at $v$ and $w$ such that its two boundary arcs are compatible and have different tags at the vertex~$v$ (Figure \ref{fig:dangle}). The \emph{jewel} of $d_{v}^{w}$ is the vertex $v$ with two distinct tags. An \emph{envelope of the dangle~$d_{v}^{w}$} is the boundary $\gamma_{v}^{w}$ of a  one-punctured monogon that is based at $w$ and such that it encloses the jewel $v$ and has the same tags at $w$ as on $d_{v}^{w}$. 
\end{definition}

\begin{figure}[!ht]
\includegraphics[height=0.1\textheight]{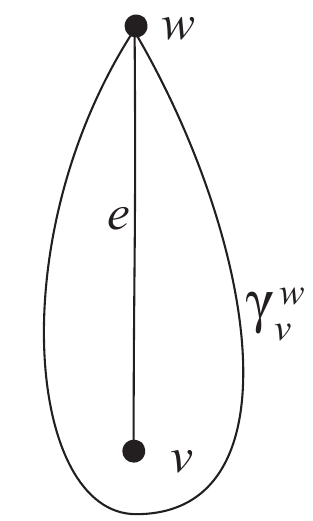}
\qquad
\qquad
\includegraphics[height=0.1\textheight]{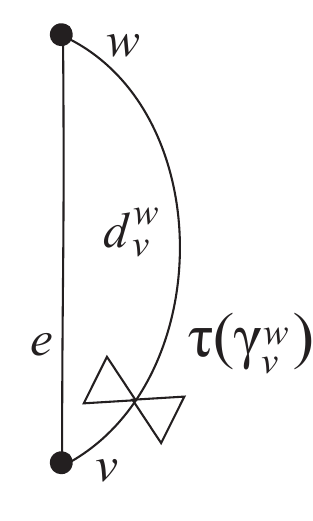}
\caption{On the left, the envelope $\gamma_{v}^{w}$ encircles its jewel $v$.  On the right is the corresponding dangle $d_{v}^{w}$, with the taggings necessarily distinct at $v$.  In this example, both the tags are plain at $w$, but both could be notched at $w$ instead.}
\label{fig:dangle}
\end{figure}

Note that, because the two boundary arcs of a dangle $d_{v}^{w}$ are compatible and the tags at the jewel $v$ are different, the tags at the remaining vertex $w$ must be both plain or both notched. In a tagged triangulation, the jewel of a dangle cannot be the endpoint of any other edge besides those of the dangle, and thus the degree of the jewel is two.

Let the \emph{tagged arc complex} $\Delta^{\bowtie}(\Sigma_{g, n})$ be the abstract simplicial complex generated by compatible distinct tagged arcs in $\bA^{\bowtie}(\Sigma_{g, n})$, and let $\bE^{\bowtie}(\Sigma_{g, n})$ be the dual graph of $\Delta^{\bowtie}(\Sigma_{g, n})$. Equivalently, $\bE^{\bowtie}(\Sigma_{g, n})$ is the graph whose vertices are the tagged triangulations of $\Sigma_{g, n}$ and two vertices are connected if and only if the tagged triangulations share all but one edge. An edge of $\bE^{\bowtie}(\Sigma_{g, n})$ corresponds to a  \emph{tagged flip}, which we think of as an operation that removes one tagged arc from the tagged triangulation and replaces it with a different compatible tagged arc.   

\begin{proposition}[\protect{\cite[Proposition 7.10]{FominShapiroThurston08}}]
Let $m$ be the number of edges of an ideal triangulation on $\Sigma_{g, n}$.  

When $n \geq 2$, $\bE^{\bowtie}(\Sigma_{g, n})$ is an $m$-regular, connected graph. Every edge of a tagged triangulation is flippable and any two tagged triangulations is related by a sequence of tagged flips. 

When $n=1$, $\bE^{\bowtie}(\Sigma_{g, n})$ is an $m$-regular graph with two isomorphic connected components, one where all tags are plain and one where all tags are notched.  
\end{proposition}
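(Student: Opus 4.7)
The plan is to establish the two assertions (\emph{$m$-regularity} and the stated \emph{connectivity} pattern) separately. The $m$-regularity part is essentially a local statement about individual tagged arcs, while the connectivity part requires a reduction to the analogous statement for ordinary triangulations.

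For $m$-regularity, given a tagged triangulation $\cT^{\bowtie}$ and an arc $\alpha \in \cT^{\bowtie}$, I would produce a unique replacement $\alpha' \ne \alpha$ such that $(\cT^{\bowtie} \setminus \{\alpha\}) \cup \{\alpha'\}$ is again a tagged triangulation, by a case analysis on the local structure of $\cT^{\bowtie}$ near $\alpha$. The cases are: (i) the two tagged triangles sharing $\alpha$ do not form a dangle, in which case $\alpha'$ is the other diagonal of the resulting tagged quadrilateral with tags inherited from the four corners; (ii) $\alpha$ is one of the two arcs of a dangle $d_v^w$ with jewel $v$, in which case $\alpha'$ is the corresponding envelope $\gamma_v^w$; and (iii) $\alpha$ is an envelope $\gamma_v^w$, in which case $\alpha'$ is one of the two arcs of the associated dangle $d_v^w$ (the one not already present in $\cT^{\bowtie}$). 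Uniqueness in each case follows directly from Definition~\ref{def:taggedcompatibility}, since any other candidate replacement would violate the tagged compatibility with some remaining arc of $\cT^{\bowtie}$. Because these cases are exhaustive, every tagged arc in $\cT^{\bowtie}$ is flippable, yielding $m$-regularity.

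For connectivity when $n \ge 2$, the plan is to reduce to connectivity of the ordinary flip graph $\bE^{\circ}(\Sigma_{g,n})$ (discussed in Section~\ref{ssec:notation}). Given any tagged triangulation $\cT^{\bowtie}$, I would construct a finite sequence of tagged flips transforming it into one with all tags plain, proceeding vertex by vertex: for each puncture $v$ carrying some notched tag in $\cT^{\bowtie}$, choose a neighboring puncture $w \ne v$ (which exists because $n \ge 2$ and a triangulation edge-connects the punctures), and use a combination of the dangle/envelope moves from (ii) and (iii) with ordinary quadrilateral flips from (i) to clear the notches at $v$ without reintroducing notches at already-processed vertices. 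Once all tags are plain, the tagged triangulation corresponds to an ordinary triangulation with self-folded triangles encoded as dangle/envelope pairs, and the conclusion follows from connectivity of $\bE^{\circ}(\Sigma_{g,n})$ together with the flip $\leftrightarrow$ dangle-to-envelope correspondence from (ii)--(iii).

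The case $n = 1$ is immediate from Definition~\ref{def:taggedarc}(2): every arc has both ends at the unique puncture, so both tags must agree, meaning any tagged triangulation is either all-plain or all-notched. A tagged flip preserves this dichotomy, so $\bE^{\bowtie}(\Sigma_{g,1})$ has at least two components, and the global tag-reversal is an involution exchanging them and inducing a graph isomorphism. Connectivity within the all-plain component then follows from connectivity of $\bE^{\circ}(\Sigma_{g,1})$ via the same dangle/envelope dictionary. The hardest step in the whole argument is the vertex-by-vertex unnotching procedure in the $n \ge 2$ case: it requires careful bookkeeping to ensure that the sequence of flips clearing notches at one vertex does not reintroduce notches at vertices already processed. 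I would address this by inducting on a suitable complexity measure such as the total number of notched ends in the tagged triangulation, using the explicit local moves (i)--(iii) to strictly decrease this measure at each stage.
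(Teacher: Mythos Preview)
The paper does not supply its own proof of this proposition; it is quoted from \cite[Proposition~7.10]{FominShapiroThurston08} and used as a black box, so there is no in-paper argument to compare your proposal against.

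Evaluating your sketch on its own merits, there is a concrete error in the local case analysis for $m$-regularity. Your cases (ii) and (iii) treat the envelope $\gamma_v^w$ as a tagged arc eligible to appear in a tagged triangulation, but Definition~\ref{def:taggedarc}(1) explicitly forbids a tagged arc from cutting out a one-punctured monogon, which is precisely what an envelope does. Consequently case (iii) is vacuous, and in case (ii) the replacement $\alpha'$ for a dangle arc is never an envelope. The correct description of the tagged flip of a dangle arc is visible (implicitly) in Step~2 of the proof of Proposition~\ref{prop:welldef} in this paper: depending on whether the dangle sits inside a puzzle piece of type B, C, or D, the flip is the inverse of the move in Case~2, Case~5, or Case~8 subcase~I of Step~1, and the resulting $\alpha'$ is a genuine tagged arc with a different underlying curve, not an envelope. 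Your two-part global strategy---local existence and uniqueness of flips, followed by an unnotching reduction to the connectivity of $\bE^\circ(\Sigma_{g,n})$---is the standard approach and is essentially what Fomin--Shapiro--Thurston carry out, but the local moves must first be restated without envelopes before your induction on the number of notched ends can be made to work.
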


It follows that $\Delta^{\bowtie}(\Sigma_{g, n})$ is also connected when there are at least two punctures and has two isomorphic connected components when there is exactly one puncture.  Note that in the case of one puncture, each connected component of $\bE^{\bowtie}(\Sigma_{g, 1})$  is isomorphic to $\bE^{\circ}(\Sigma_{g, 1})$, and each component of the tagged arc complex $\Delta^{\bowtie}(\Sigma_{g, 1})$ is isomorphic to  $\Delta^{\circ}(\Sigma_{g, 1})$.  For simplicity, we will restrict to the component where all tags are plain in the one puncture case for ease of exposition.  With this convention, we have that both $\bE^{\bowtie}(\Sigma_{g, n})$ and $\Delta^{\bowtie}(\Sigma_{g, n})$ are connected in all cases. 

The relationship between the ordinary set-up and the tagged one can be described by a map $\tau : \bA^{\circ}(\Sigma_{g, n}) \to \bA^{\bowtie}(\Sigma_{g, n})$, which we will define using the language of dangles and envelopes from Definition \ref{def:dangle} and Figure~\ref{fig:dangle}. If $e \in \bA^{\circ}(\Sigma_{g, n})$ is not an envelope (that is, it does not cut out a once-punctured monogon), then $\tau(e)$ is $e$ tagged plain at both ends. If $e$ is an  envelope based at $w$ and surrounding $v$, then $\tau(e)$ is the unique arc enclosed by $e$ that connects $v$ and $w$ and that is notched at $v$.  For example, in Figure \ref{fig:dangle}, $\tau(e) = e$, but $\tau$ maps the envelope $\gamma_v^{w}$ to the tagged arc on the right.  
 
  As shown in \cite[Section 7]{FominShapiroThurston08}, $\tau$ preserves the compatibility of arcs and provides a way of mapping an ordinary triangulation to a tagged triangulation.  In this way, we can understand
$\Delta^{\circ}(\Sigma_{g, n})$ as a subcomplex of $\Delta^{\bowtie}(\Sigma_{g, n})$ (though possibly it is not an induced subcomplex), and
$\bE^{\circ}(\Sigma_{g, n})$ as a subgraph of $\bE^{\bowtie}(\Sigma_{g, n})$. 

To define the exchange matrix of a tagged triangulation, we again use puzzle pieces, as drawn in Figure ~\ref{fig:taggedpuzzle}.  As before, the fourth puzzle piece by itself is a tagged triangulation of the four-punctured sphere $\Sigma_{0, 4}$. Since it does not have any exterior edge, it cannot be glued with any other puzzle pieces. 

\begin{figure}
\begin{tabular}{cccc}
\includegraphics[height=1.3in]{puzzlepiece1}&
\includegraphics[height=1.3in]{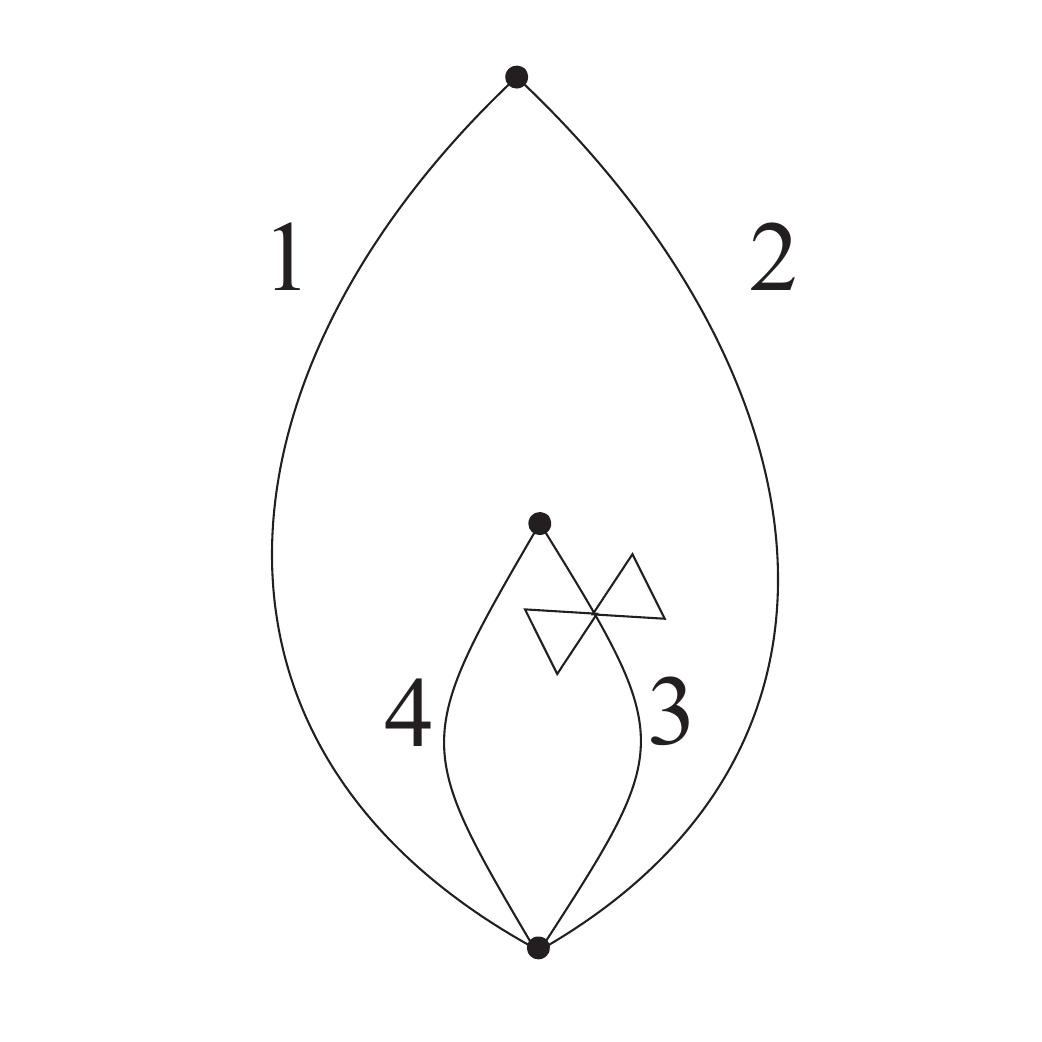}&
\includegraphics[height=1.3in]{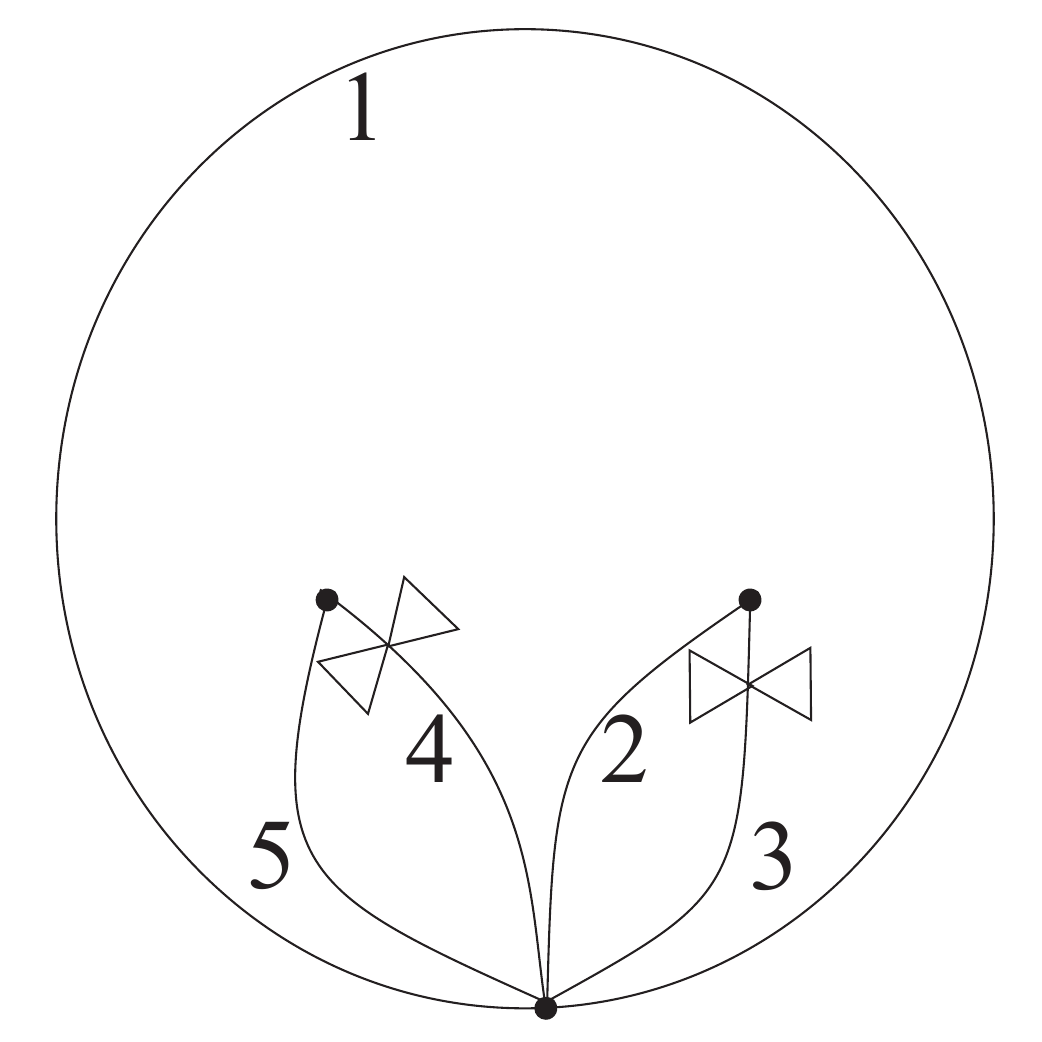}&
\includegraphics[height=1.3in]{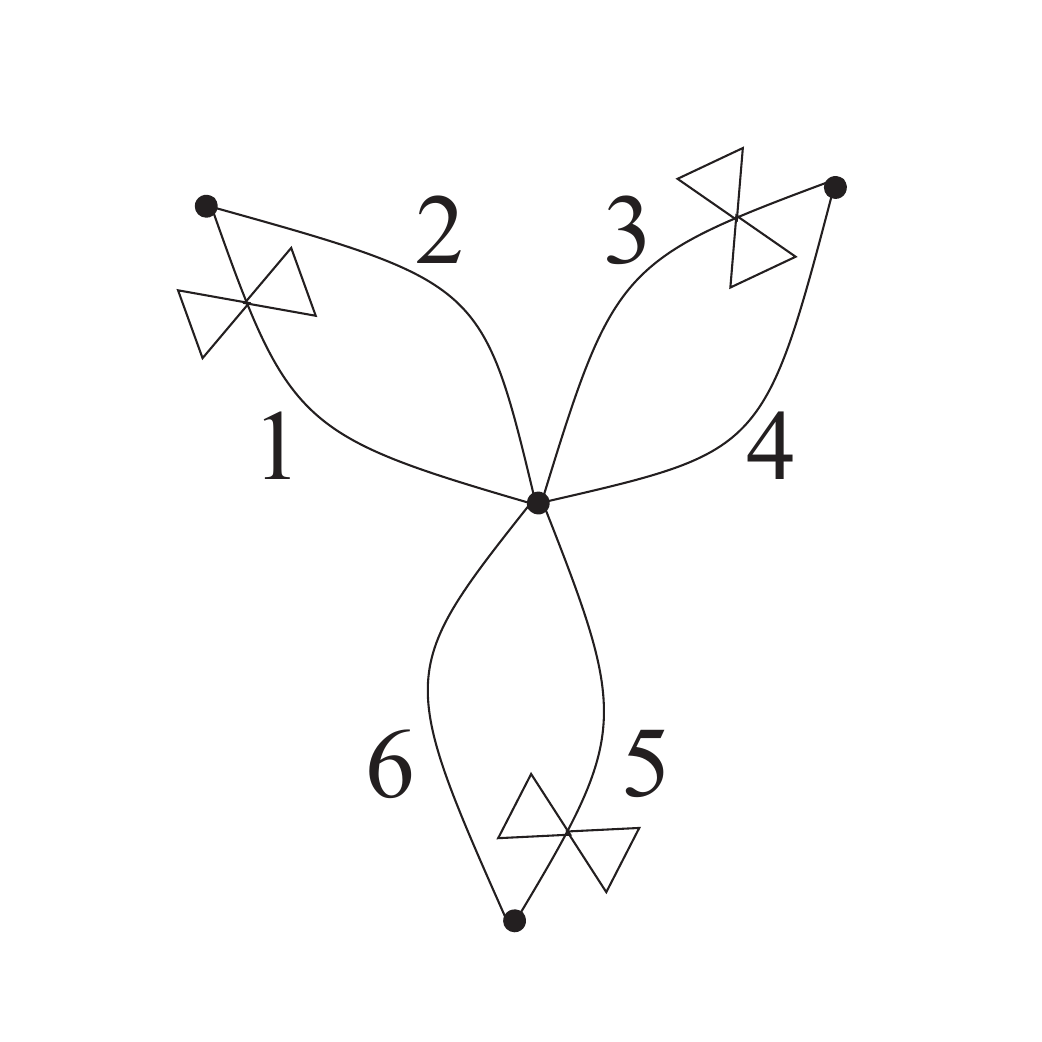}\\
A & B & C & D
\end{tabular}
\caption{The four tagged puzzle pieces.  They are the images under $\tau$ of the four ordinary puzzle pieces from Figures~\ref{fig:puzzle} and \ref{fig:puzzle4}.}
\label{fig:taggedpuzzle}
\end{figure}

\begin{lemma}\label{lem:taggedpuzzlepieces}
Any tagged triangulation $\cT^{\bowtie}$ on $\Sigma_{g, n}$ is obtained by 
\begin{enumerate}
\item gluing the tagged puzzle pieces along their boundary edges; and 
\item tagging all ends of the glued boundary edges in a compatible way. 
\end{enumerate}
\end{lemma}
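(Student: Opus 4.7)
My plan is to reduce the claim to the ordinary (non-tagged) analogue already established in \cite[Section 4]{FominShapiroThurston08}, transported through the map $\tau:\bA^{\circ}(\Sigma_{g,n})\to\bA^{\bowtie}(\Sigma_{g,n})$. The key input I will exploit is the fact recorded in the caption of Figure~\ref{fig:taggedpuzzle}: the four tagged puzzle pieces are precisely $\tau$ applied to the three ordinary puzzle pieces of Figure~\ref{fig:puzzle} together with the fourth piece of Figure~\ref{fig:puzzle4}. So if I can produce, from an arbitrary tagged triangulation $\cT^{\bowtie}$, an ordinary triangulation that maps back to it under $\tau$ (up to a symmetry at a few vertices), then the ordinary puzzle-piece decomposition of that ordinary triangulation will push forward to a tagged puzzle-piece decomposition of $\cT^{\bowtie}$.

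First I would analyze the local tag pattern at each vertex. By the compatibility condition in Definition~\ref{def:taggedcompatibility}, if two distinct tagged arcs of $\cT^{\bowtie}$ share a vertex $v$, then they carry the same tag at $v$, unless they form a dangle $d_{v}^{w}$ with jewel at $v$ (in which case exactly two arcs meet at $v$ and they have opposite tags at $v$). Hence at every non-jewel vertex $v$ the tags of $\cT^{\bowtie}$ at $v$ are all plain or all notched, and in the latter case I may flip them all to plain without violating compatibility. Doing this at every non-jewel vertex produces an auxiliary tagged triangulation $\cT^{\bowtie}_{0}$ whose only notched ends are the notched sides of dangles.

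Next, I would observe that $\cT^{\bowtie}_{0}$ lies in the image of $\tau$ applied to an ordinary triangulation. Explicitly, for each dangle $d_{v}^{w}$ in $\cT^{\bowtie}_{0}$, the two tagged arcs of the dangle have a common underlying arc from $v$ to $w$; replace the notched-at-$v$ arc by the envelope $\gamma_{v}^{w}$ (a once-punctured monogon at $w$ enclosing $v$) and keep the plain one. Every other tagged arc of $\cT^{\bowtie}_{0}$ is plainly tagged and gets sent to its underlying arc. One checks that the resulting collection $\cT$ is an ordinary triangulation of $\Sigma_{g,n}$ (dangles become self-folded triangles), and by construction $\tau(\cT)=\cT^{\bowtie}_{0}$. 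Now applying \cite[Section 4]{FominShapiroThurston08} to $\cT$ gives a decomposition of $\cT$ as a gluing of ordinary puzzle pieces (from Figures~\ref{fig:puzzle}--\ref{fig:puzzle4}) along shared boundary edges. Applying $\tau$ edge-by-edge and piece-by-piece, the gluing data carries over to a decomposition of $\cT^{\bowtie}_{0}$ as a gluing of the four tagged puzzle pieces with all exterior ends plain.

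Finally, I would reinstate the tag changes made at non-jewel vertices to recover $\cT^{\bowtie}$ from $\cT^{\bowtie}_{0}$. Changing the tag at a non-jewel vertex $v$ simultaneously flips the tags at $v$ of every arc ending at $v$; inside each tagged puzzle piece this leaves the piece (up to the symmetry of changing all tags at a vertex of the piece) still in the list of the four tagged puzzle pieces, and on glued boundary edges the operation is performed on both sides simultaneously, so the tags remain matched. This gives the desired puzzle-piece gluing description of $\cT^{\bowtie}$, with the glued boundary tags determined by the compatibility condition, which is exactly the content of (1) and (2).

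The main obstacle I foresee is the bookkeeping in the last step: one must verify that the class of tagged puzzle pieces is closed under the tag-flipping symmetries at non-jewel vertices, so that re-tagging $\cT^{\bowtie}_{0}$ back to $\cT^{\bowtie}$ does not produce new kinds of pieces. A secondary subtlety, which I would handle separately, is the degenerate case when the entire surface is already a single fourth puzzle piece (i.e.\ $\Sigma_{0,4}$ with the triangulation of Figure~\ref{fig:puzzle4}), since this piece has no exterior edges and cannot be glued to anything else; here the argument collapses to the observation that the tagged versions of this triangulation and its tag-change variants are themselves already the fourth tagged puzzle piece.
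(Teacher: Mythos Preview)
Your argument is correct and takes a genuinely different route from the paper. The paper's proof works directly with the tagged triangulation: it deletes all dangles to obtain a subcollection $\cS^{\bowtie}\subset\cT^{\bowtie}$ (at whose vertices all tags now agree), and then argues that each complementary region of $\cS^{\bowtie}$ is bounded by at most three arcs, contains no interior vertex except jewels, and hence---by maximality of $\cT^{\bowtie}$---must carry exactly $3-k$ dangles if it has $k$ boundary arcs, forcing it to be one of the four pieces in Figure~\ref{fig:taggedpuzzle}. By contrast, you reduce to the ordinary case: normalize tags at non-jewel vertices, invert $\tau$ to pass to an ordinary triangulation, invoke the ordinary puzzle-piece decomposition from \cite[Section 4]{FominShapiroThurston08}, and push the decomposition forward through $\tau$ before restoring the original tags. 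Your approach is economical in that it recycles the ordinary result wholesale, but it leans on two facts you use without proof: that your envelope-replacement really yields a \emph{maximal} ordinary collection (i.e., that tag-normalized tagged triangulations are exactly $\tau$ of ordinary ones), and that the puzzle-piece list is stable under simultaneous tag-flips at boundary vertices. The paper's direct argument sidesteps both issues and makes the role of maximality explicit. Your ``main obstacle'' is in fact absorbed by item~(2) of the lemma: once boundary-edge ends at a non-jewel vertex are re-tagged, compatibility forces the same tag on the interior dangle arcs ending there, so no new piece types appear.
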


\begin{proof}
For the given tagged triangulation $\cT^{\bowtie}$, we may think it as a top dimensional simplex in $\Delta^{\bowtie}(\Sigma_{g, n})$. Take a subsimplex $\cS^{\bowtie} \subset \cT^{\bowtie}$, by eliminating all dangles. Then at each vertex of $\cS^{\bowtie}$, the adjacent tagged arcs have the same tag. 

Pick a region $R \subset \Sigma_{g, n}$ bounded by arcs in $\cS^{\bowtie}$. It is sufficient to show that $R$ is one of the tagged puzzle pieces. $R$ is bounded by at most three arcs. Otherwise we can refine the triangulation $\cT^{\bowtie}$ by introducing a new tagged arc dividing the region $R$, which violates the maximality of $\cT^{\bowtie}$.  There is no inner vertex $v$ except the other end of dangles, because otherwise we can insert another compatible tagged edge connecting $v$ and one of the boundary vertices. If $R$ has $k \le 3$ boundary arcs, then there are $3 - k$ dangles in $R$, by the maximality of $\cT^{\bowtie}$. Then Figure \ref{fig:taggedpuzzle} are the remaining possibilities. 
\end{proof}

Observe that the four tagged puzzle pieces in Figure ~\ref{fig:taggedpuzzle} are the images of the four ordinary puzzle pieces in Figures~\ref{fig:puzzle} and ~\ref{fig:puzzle4} under the map $\tau$.  We define the matrix associated to each tagged puzzle piece as the same one associated to its corresponding ordinary puzzle piece.  Note when two tagged arcs have the same underlying arc, their corresponding matrix entries are the same.

\begin{definition}\label{def:exchangematrix}
Let $\cT^{\bowtie}$ be a tagged triangulation with $m$ edges that is made up of tagged puzzle pieces, and let $E_{\cT^{\bowtie}}$ be the set of its edges.   The \emph{exchange matrix} $ B_{\cT^{\bowtie}} = (b_{ij})$ is the $m \times m$ matrix whose rows and columns are indexed by the edges, constructed as the sum of all minor matrices obtained from the puzzle pieces used to construct $\cT^{\bowtie}$.  The \emph{seed} from the triangulation $\cT^{\bowtie}$ is the pair $(E_{\cT^{\bowtie}}, B_{\cT^{\bowtie}})$. 
\end{definition}

\begin{example}\label{ex:exchangematrixtaggedtriangulation}
Consider the tagged triangulation of $\Sigma_{0, 4}$ shown on the left of Figure \ref{fig:mutation7}.  It is obtained from gluing together two puzzle pieces of Type B. Taking $e_{6} = \alpha$, the exchange matrix for the triangulation on the left is 
\[  \arraycolsep=2pt \left[\begin{array}{rrrrrr}0&0&0&0&1& -1\\0&0&0&0&1& -1\\0&0&0&0&-1& 1\\0&0&0&0&-1& 1\\-1&-1&1&1&0&0\\ 1&1&-1&-1&0 &0\end{array}\right]. \]
Mutation of $\alpha$ produces the triangulation on the right of Figure \ref{fig:mutation7}, which is by itself the Type D puzzle piece. The mutated exchange matrix $\mu(B)$ is the one from Figure \ref{fig:puzzle4}.
\end{example}

It is a straightforward calculation to check that the exchange matrix for the tagged triangulation obtained from flipping the $k$-th edge of $\cT^{\bowtie}$ is the exchange matrix for $\cT^{\bowtie}$ mutated in the direction $k$. 

The following theorem, which is the main result of \cite{FominShapiroThurston08}, summarizes our discussion so far. In the case $|V| = 1$, recall that we restricted to the case where all tags are plain, so that $\bE^{\bowtie}(\Sigma_{g, 1})$ is an $m$-regular, connected graph in all cases.  

\begin{theorem}[\protect{\cite[Theorem 7.11]{FominShapiroThurston08}}]\label{thm:clusteralgebraofsurface}
Define the cluster algebra $\cA(\Sigma_{g, n})$ using an initial seed coming from any ordinary or tagged triangulation of $\Sigma_{g, n}$. Then each seed of $\cA(\Sigma_{g, n})$ comes from a tagged triangulation of $\Sigma_{g, n}$, and mutation of the seed corresponds to tagged flips of the triangulation. In particular, the cluster complex of $\cA(\Sigma_{g, n})$ is the tagged arc complex $\Delta^{\bowtie}(\Sigma_{g, n})$ and the exchange graph of $\cA(\Sigma_{g, n})$ is the dual graph $\bE^{\bowtie}(\Sigma_{g, n})$ . 
\end{theorem}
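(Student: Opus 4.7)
The plan is to establish the bijection between the combinatorial objects (tagged triangulations with their flips) and the algebraic objects (seeds with their mutations) in two halves: a \textbf{combinatorial} half showing that the tagged triangulation machinery gives a well-defined assignment from $\Delta^{\bowtie}(\Sigma_{g,n})$ into the cluster complex, and an \textbf{algebraic} half showing that the assignment is injective and surjective onto all seeds.

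First I would fix an initial tagged triangulation $\cT^{\bowtie}$ and use its seed $(E_{\cT^{\bowtie}}, B_{\cT^{\bowtie}})$ from Definition \ref{def:exchangematrix} to define $\cA(\Sigma_{g,n})$. The core local claim is that, for every edge $e_k$ of $\cT^{\bowtie}$, if $\cT^{\bowtie'}$ denotes the tagged triangulation obtained by performing the tagged flip at $e_k$, then $B_{\cT^{\bowtie'}} = \mu_k(B_{\cT^{\bowtie}})$. Since the exchange matrix is constructed from puzzle pieces and a flip only alters the puzzle pieces meeting $e_k$, this reduces to a finite case check: enumerate the puzzle pieces of Figure \ref{fig:taggedpuzzle} incident to $e_k$, perform the tagged flip, and confirm that the updated minor matrices are related by \eqref{eqn:matrixmutation}. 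For ordinary puzzle pieces (Figures \ref{fig:puzzle} and \ref{fig:puzzle4}) this is exactly Proposition \ref{prop:flip}; for the tagged puzzle pieces, the check lifts through $\tau$, with the pieces involving dangles (types C and D) requiring a direct computation, in particular tracking what happens when one flips the folded edge of type B to obtain type C.

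Once this local compatibility is in hand, the global picture follows from connectivity. Starting from the initial seed and iterating mutations, we generate a sequence of seeds each of which, by induction on the length of the flip sequence, is the seed of a tagged triangulation. Since $\bE^{\bowtie}(\Sigma_{g,n})$ is $m$-regular and connected (using the conventions adopted when $n=1$), every tagged triangulation is reached, so the map $\cT^{\bowtie} \mapsto (E_{\cT^{\bowtie}}, B_{\cT^{\bowtie}})$ hits every seed that arises in $\cA(\Sigma_{g,n})$. In particular $\Delta^{\bowtie}(\Sigma_{g,n})$ maps onto the cluster complex and $\bE^{\bowtie}(\Sigma_{g,n})$ maps onto the exchange graph.

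The main obstacle, and the only non-routine part, is showing that the map is \emph{injective}: distinct tagged arcs must give distinct cluster variables, and distinct tagged triangulations must give distinct seeds. Coincidences are not ruled out by the $m$-regularity of the flip graph alone, because in principle two different tagged triangulations could produce the same cluster via the abstract mutation recursion. My plan to handle this is to invoke the Laurent phenomenon: every cluster variable $x_\alpha$ associated to a tagged arc $\alpha$ lies in $R[x_{e_1}^{\pm}, \ldots, x_{e_m}^{\pm}]$ for the initial seed, and one can compute its Laurent expansion explicitly (for instance via the snake graph expansion of Musiker--Schiffler--Williams, or by directly tracking the exchange relations along a sequence of flips from $\cT^{\bowtie}$ to a tagged triangulation containing $\alpha$). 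Distinct tagged arcs yield Laurent polynomials with distinct leading monomials (read off from the crossing data between $\alpha$ and the edges of $\cT^{\bowtie}$), so the assignment $\alpha \mapsto x_\alpha$ is injective, and consequently so is $\cT^{\bowtie} \mapsto (E_{\cT^{\bowtie}}, B_{\cT^{\bowtie}})$. Combining injectivity with the surjectivity established above yields the desired identifications of the cluster complex with $\Delta^{\bowtie}(\Sigma_{g,n})$ and the exchange graph with $\bE^{\bowtie}(\Sigma_{g,n})$.
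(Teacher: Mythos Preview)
The paper does not prove this theorem: it is quoted as \cite[Theorem 7.11]{FominShapiroThurston08} and used as background, with no argument supplied beyond the preceding discussion of puzzle pieces and Proposition~\ref{prop:flip}. So there is no proof in the paper to compare your proposal against.

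That said, your outline is broadly the shape of the Fomin--Shapiro--Thurston argument, with one caveat. The local claim $B_{\cT^{\bowtie'}} = \mu_k(B_{\cT^{\bowtie}})$ and the surjectivity-via-connectivity step are fine. The injectivity step, however, is where the real content lies, and your proposed mechanism is shaky. You say distinct tagged arcs have Laurent expansions with distinct leading monomials ``read off from the crossing data between $\alpha$ and the edges of $\cT^{\bowtie}$.'' But two tagged arcs with the same underlying arc and different taggings (for instance the two edges of a dangle) have identical geometric crossing numbers with every edge of an ordinary initial triangulation, so crossing data alone cannot separate them; one needs the finer denominator-vector or $g$-vector analysis that takes tagging into account. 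Invoking Musiker--Schiffler--Williams is also anachronistic relative to \cite{FominShapiroThurston08}, whose own proof of injectivity goes through a structural analysis of the exchange graph (their Sections 5--9) rather than explicit Laurent expansions. If you want a self-contained argument along your lines, you would need to sharpen the ``leading monomial'' claim to an honest statement about denominator vectors of tagged arcs and verify it directly.
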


\begin{remark}\label{rmk:quantumclusteralgebra}
As one can see in Figure \ref{fig:puzzle4} or Example \ref{ex:exchangematrixtaggedtriangulation}, the exchange matrix for a punctured surface is not of full rank. Thus, in contrast to the case of surface with boundaries and without punctures, $\cA(\Sigma_{g, n})$ does not admit a quantum cluster algebra as its deformation quantization \cite[Proposition 3.3]{BerensteinZelevinsky05}.
\end{remark}


\section{The homomorphism $\rho : \cA(\Sigma_{g, n}) \to \cC(\Sigma_{g, n})$ }\label{sec:rho}

In this section, we prove Compatibility Lemma in Section \ref{ssec:compatibility}, that there is a monomorphism  $\rho: \cA(\Sigma_{g, n}) \to \cC(\Sigma_{g, n})$.  After describing $\rho$, we prove in Proposition~\ref{prop:welldef} that it is a well-defined algebra homomorphism, and in Proposition~\ref{prop:inj}  that it is injective. 

\begin{definition}\label{def:rholocal}
Let $\alpha \in \cA(\Sigma_{g, n})$ be a tagged arc with endpoints at the vertices $v, w \in V$ (which are possibly the same). Let
\[
	\rho(\alpha) := \begin{cases}\underline{\alpha}, & \mbox{if both ends of $\alpha$ are plain}\\
	v\underline{\alpha}, & \mbox{if only the end at $v$ of $\alpha$ is notched}\\
	w\underline{\alpha}, & \mbox{if only the end at $w$ of $\alpha$ is notched}\\
	vw\underline{\alpha}, & \mbox{if both ends of $\alpha$ are notched}.\end{cases}
\]
where $\underline{\alpha}$ denotes the underlying arc (Definition \ref{def:taggedarc}). 
\end{definition}

\begin{remark}
\begin{enumerate}
\item When $v = w$, both ends of $\alpha$ must have the same decoration (Definition \ref{def:taggedarc}). So the formula is $\rho(\alpha) = \underline{\alpha}$  if both ends are plain, and $\rho(\alpha) = v^{2}\underline{\alpha}$ if both ends are notched.   
\item When there is only one puncture, all endpoints of arcs are tagged plainly. So $\rho(\alpha) = \underline{\alpha}$ for edges $\alpha$ in a once-punctured surface. 
\item   For a related perspective for the definition of $\rho$, see \cite[Lemma 10.14]{FominThurston18}.  
\end{enumerate}
\end{remark}

By introducing a little more notation, we can write the formula for $\rho$ more compactly. For a tagged arc $\alpha$ with an endpoint at $v \in V$, let 
\[
	t_{v}(\alpha) := \begin{cases}0 & \mbox{if $\alpha$ is decorated plainly at $v$},\\
	1 & \mbox{if $\alpha$ is decorated notched at $v$}. \end{cases}
\]
Then Definition \ref{def:rholocal} becomes
\[
	\rho(\alpha) := v^{t_{v}(\alpha)}w^{t_{w}(\alpha)}\underline{\alpha}.
\]
for an edge $\alpha$  whose endpoints are $v$ and $w$.  

\begin{proposition} \label{prop:welldef}
There is a well-defined algebra homomorphism  $\rho : \cA(\Sigma_{g, n}) \to \cC(\Sigma_{g, n})$ that extends Definition \ref{def:rholocal}. 
\end{proposition}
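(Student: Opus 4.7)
The plan is to check that the formula in Definition \ref{def:rholocal} respects the exchange relations of $\cA(\Sigma_{g,n})$. By Theorem \ref{thm:clusteralgebraofsurface}, every exchange relation in $\cA(\Sigma_{g,n})$ comes from a tagged flip of tagged triangulations, and every cluster variable is reached from an initial cluster by iterated mutations. So it suffices to fix an initial ordinary triangulation $\cT_0$ of $\Sigma_{g,n}$ with plain tags, send each edge $\alpha_i$ to $\rho(\alpha_i) = \underline{\alpha_i} \in \cC(\Sigma_{g,n})$, and then prove by induction on distance in the exchange graph that for every tagged flip replacing $\alpha_k$ by $\alpha_k'$ the cluster exchange relation $\alpha_k \alpha_k' = M_+ + M_-$ translates under $\rho$ into the identity $\rho(\alpha_k)\rho(\alpha_k') = \rho(M_+) + \rho(M_-)$ in $\cC(\Sigma_{g,n})$. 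This inductively produces well-defined images for every cluster variable, and hence a homomorphism from the subalgebra that they generate.

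The next step is to use Lemma \ref{lem:taggedpuzzlepieces} to classify the local picture of a tagged flip. Since every tagged triangulation is obtained by gluing the four tagged puzzle pieces in Figure \ref{fig:taggedpuzzle}, each flip is confined to the union of at most two adjacent puzzle pieces, reducing the verification to a finite list of local cases. Flips across the shared edge of two type-A triangles produce the classical Ptolemy relation, which in $\cC(\Sigma_{g,n})$ is simply the skein relation applied with plain tags at every endpoint. Flips of edges adjacent to dangles in type-B, C, or D puzzle pieces involve notched tags at jewel vertices; the decorations $v^{t_v(\alpha)} w^{t_w(\alpha)}$ introduced in Definition \ref{def:rholocal} are precisely what is needed to turn the exchange relation into the puncture-skein relation of $\cC(\Sigma_{g,n})$. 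The remaining family of cases consists of flips that swap a dangle with its envelope, that is, flips that change a tag from plain to notched at a jewel vertex.

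This last family is where the main obstacle lies. Here the two sides of the exchange relation involve arcs with different tag patterns whose images in $\cC(\Sigma_{g,n})$ differ by factors of vertex classes, and the various occurrences of $v$, $w$ and their inverses must balance against monogon-bounding arcs. The key tool is Lemma \ref{lem:monogon}, which provides the identity $\gamma = we^2$ for an arc $\gamma$ bounding a once-punctured monogon. Combined with the invertibility of the vertex classes, this identity is exactly the conversion factor that makes the decoration convention of Definition \ref{def:rholocal} line up with the puncture-skein and framing relations. Once this is in hand, each local case reduces to a routine diagrammatic calculation, and the induction on distance in the exchange graph completes the construction of $\rho : \cA(\Sigma_{g,n}) \to \cC(\Sigma_{g,n})$ as a well-defined algebra homomorphism.
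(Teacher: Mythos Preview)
Your outline is correct and follows the same strategy as the paper: reduce the verification to the finitely many local configurations coming from the tagged puzzle pieces (Lemma \ref{lem:taggedpuzzlepieces}), and in each case check that the exchange relation becomes a skein or puncture-skein identity in $\cC(\Sigma_{g,n})$, with Lemma \ref{lem:monogon} converting envelope arcs into products of dangle edges. The paper carries this out as an exhaustive ten-case analysis (plus the dangle case handled by involutivity of mutation); your description of which relation is used where is slightly imprecise---for instance, in the two-type-A case the tags need not be plain (the vertex factors $v^{t_v}$ simply factor out identically on both sides because compatible arcs share tags at non-jewel vertices), and Lemma \ref{lem:monogon} is invoked throughout Cases 4--10 rather than only in the tag-changing flips---but the ingredients and overall architecture are the same.
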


\begin{proof}
Recall that $\cA(\Sigma_{g, n})$ is generated by the edges of all tagged triangulations of $\Sigma_{g, n}$, subject to the exchange relations determined by the mutations.  $\rho$ is already defined for all edges of tagged triangulations, and we can extend it uniquely to the polynomial subalgebra of $\cF$ freely generated by the edges of all tagged triangulations of $\Sigma_{g, n}$. We need to show this map preserves the exchange relations coming from tagged flips along any edge of any tagged triangulation.

With that goal in mind, let $\alpha$ be an arbitrary edge of an arbitrary tagged triangulation $\cT^{\bowtie}$.  
Let the ends of $\alpha$ be $v$ and $w$ (which are possibly the same).  By Lemma~\ref{lem:taggedpuzzlepieces}, we may assume that $\cT^{\bowtie}$ was constructed using tagged puzzle pieces. We split our proof into parts: when $\alpha$ is in a dangle and when it is not.  

{\bf Step 1.} Assume that $\alpha$ is not in a dangle. Then $\alpha$ must be an edge shared by two tagged puzzle pieces of type A, B, or C as depicted in Figure~\ref{fig:taggedpuzzle}. There are ten cases. In each case, we will check that the exchange relation from flipping $\alpha$ holds in $\cC(\Sigma_{g, n})$. 

We will be applying the following observation repeatedly. If $\alpha$ and $\alpha'$ are two compatible arcs forming a dangle with a jewel $v$ (as in Figure~\ref{fig:dangle}), then $t_{v}(\alpha) \neq t_{v}(\alpha')$ and $t_{v}(\alpha) + t_{v}(\alpha') = 1$.  But in all other cases, if $\alpha$ and $\alpha'$ are two compatible arcs that have a common endpoint at $v$, and $v$ is not the jewel of a dangle, then $t_{v}(\alpha) = t_{v}(\alpha')$. In particular, a tagged triangulation determines a single tagging $t_{v}$ (independent from $\alpha$) for the vertex $v$, provided $v$ is not the jewel of a dangle in the triangulation.  

{\sf Case 1.} The arc $\alpha$ is the unique common edge of two puzzle pieces of type A. 

The two triangles glued along $\alpha$ form a quadrilateral.  Say the edges are $e_{1}, e_{2}, e_{3}, e_{4}$ in counterclockwise order, and $e_{1}$ and $e_{4}$ are adjacent to $v$. Figure \ref{fig:mutation1} describes the configuration of the arcs, but with the tags suppressed  at the four vertices. Let $\alpha'$ be the flip of $\alpha$.  We need to check that $\rho$ preserves the exchange relation $\alpha \alpha' = e_{1}e_{3} + e_{2}e_{4}$.  

\begin{figure}[!ht]
\begin{minipage}{1.2in}\includegraphics[width=\textwidth]{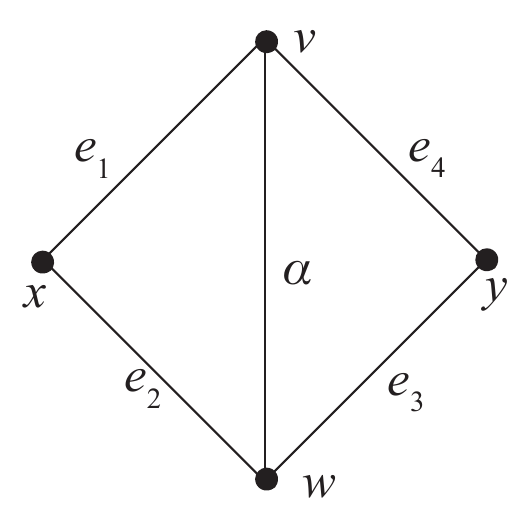}\end{minipage}
\quad $\longrightarrow$ \quad
\begin{minipage}{1.2in}\includegraphics[width=\textwidth]{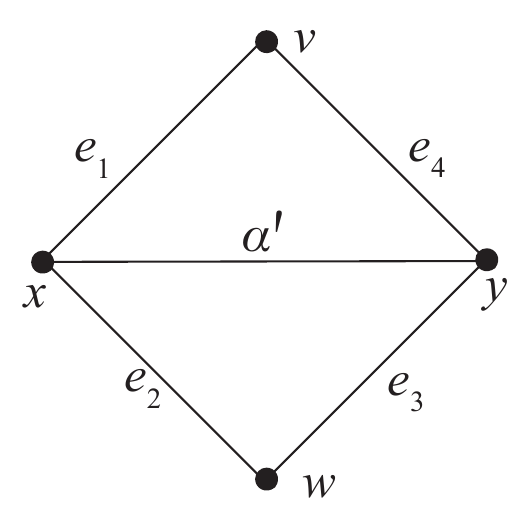}\end{minipage} 
\caption{In Case 1,  two type A puzzle pieces are glued along exactly one edge $\alpha$. The induced exchange relation from flipping $\alpha$ is $\alpha \alpha' = e_{1}e_{3} + e_{2}e_{4}$}
\label{fig:mutation1}
\end{figure}

Although we have not shown the taggings, we know that on the left $t_{v}:= t_{v}(\alpha) = t_{v}(e_1)= t_{v}(e_3)$ and $t_{w}:= t_{w}(\alpha) = t_{w}(e_2) = t_{w}(e_3)$, and on the right  $ t_{x} := t_{x}(\alpha') = t_{x}(e_1)= t_{x}(e_2)$ and $ t_{y}:=t_{y}(\alpha') = t_{y}(e_3) = t_{y}(e_4)$ by our earlier observation about the compatibility in the absence of dangles.

By definition of $\rho$, we have $\rho(\alpha \alpha') = \rho(\alpha) \rho(\alpha')  =v^{t_{v}}w^{t_{w}}x^{t_{x}}y^{t_{y}} \underline{\alpha}\, \underline{\alpha'}  $.  
Similarly, $ \rho(e_{1}e_{3} )= v^{t_{v}}w^{t_{w}}x^{t_{x}}y^{t_{y}} \underline{e_{1}}\, \underline{e_{3}} $ and $\rho(e_{2}e_{4} )= v^{t_{v}}w^{t_{w}}x^{t_{x}}y^{t_{y}} \underline{e_{2}}\, \underline{e_{4}} $.   

In $\cC(\Sigma_{g, n})$, we have $\underline{\alpha}\, \underline{\alpha'} = \underline{e_{1}} \, \underline{e_{3}} + \underline{e_{2}} \, \underline{e_{4}}$ by the skein relation (1) in Definition \ref{def:curvealgebra}. Thus 
 \[ \rho(\alpha\alpha')  = v^{t_{v}}w^{t_{w}}x^{t_{x}}y^{t_{y}} \underline{ \alpha'} \underline{\alpha} 
 = v^{t_{v}}w^{t_{w}}x^{t_{x}}y^{t_{y}}(
 \underline{e_{1}}\, \underline{e_{3}} + \underline{e_{2}} \, \underline{e_{4}} )= \rho(e_{1}e_{3} + e_{2}e_{4}).
\]

{\sf Case 2.} The arc $\alpha$ is one of two common edges of two puzzle pieces of type A. 

In this case the two triangles form a one-punctured bigon, as in the left of Figure \ref{fig:mutation2}. 
Flipping $\alpha$ produces the figure on the right, with the tags suppressed for simplicity. If both  $\alpha$ and $e_{2}$ are plain at $w$, then flipping $\alpha$ produces $\alpha'$ notched at $w$ while $e_{2}$ remains plain at $w$, as depicted in Figure \ref{fig:mutation2}.  But if both $\alpha$ and $e_{2}$ are notched at $w$, then flipping $\alpha$ produces $\alpha'$ plain at $w$ while $e_{2}$ remains notched at $w$. The taggings at $v$ and $x$ are unchanged by the flip.   

\begin{figure}[!ht]
\begin{minipage}{1.2in}\includegraphics[width=\textwidth]{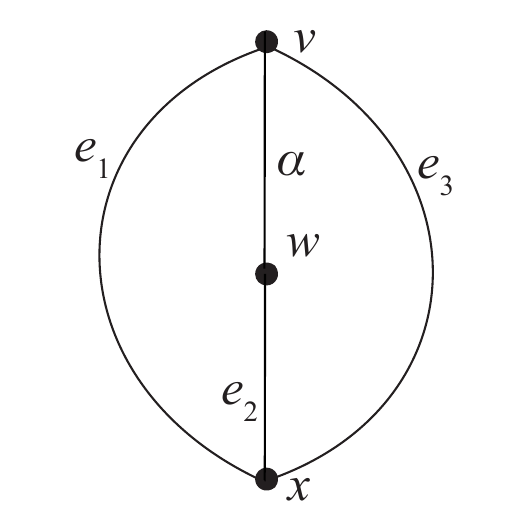}\end{minipage}
\quad $\longrightarrow$ \quad
\begin{minipage}{1.2in}\includegraphics[width=\textwidth]{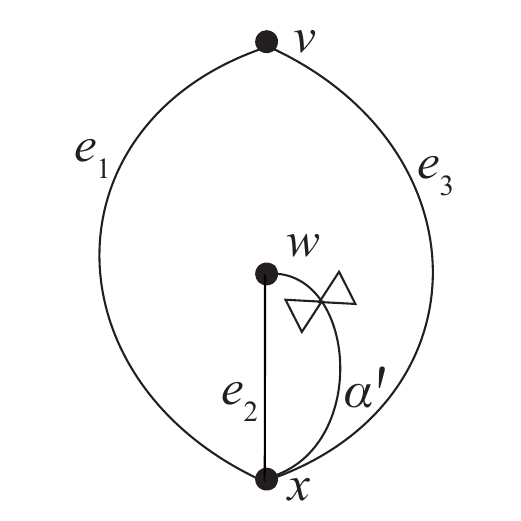}\end{minipage} 
\caption{In Case 2, two type A puzzle pieces are glued along two edges, and the one labeled $\alpha$ is flipped. The cluster mutation is 
$\alpha \alpha' = e_{1} + e_{3}$.}
\label{fig:mutation2}
\end{figure}

Since the tags are all the same at $v$, we denote the tagging of any arc ending at $v$ simply by $t_{v}$, and similarly we use $t_{x}$ for $x$. At $w$, we have $t_{w}(\alpha) = t_{w}(e_{2})$, but $t_{w}(e_{2}) \neq t_{w}(\alpha')$ and $t_{v}(\alpha) + t_{v}(\alpha') = 1$. So $\rho(\alpha \alpha') =  wv^{t_{v}}x^{t_{x}}\underline{\alpha}\underline{\alpha'}$.  Furthermore, note that $\underline{\alpha'} = \underline{e_{2}}$, and by the puncture-skein relation in Definition \ref{def:curvealgebra}, we have $w\underline{\alpha} \underline{e_{2}} = \underline{e_{1}} + \underline{e_{3}}$. Thus, 
\[
	\rho(\alpha \alpha') 
	= wv^{t_{v}}x^{t_{x}}\underline{\alpha}\underline{e_{2}}
	= v^{t_{v}}x^{t_{x}}( \underline{e_{1}} +\underline{e_{3}}) 
	= v^{t_{v}}x^{t_{x}}\underline{e_{1}} + v^{t_{v}}x^{t_{x}} \underline{e_{3}} = \rho(e_{1} + e_{3}).
\]

{\sf Case 3.} The arc $\alpha$ is one of three common edges of two puzzle pieces of type A. 

In this case, $\Sigma_{g, n} = \Sigma_{0, 3}$, which is excluded by assumption. 

{\sf Case 4.} The arc $\alpha$ is the unique common edge of two puzzle pieces of type A and B. 

The result of gluing the two puzzle pieces is shown in Figure \ref{fig:mutation3}. 
\begin{figure}[!ht]
\begin{minipage}{1.2in}\includegraphics[width=\textwidth]{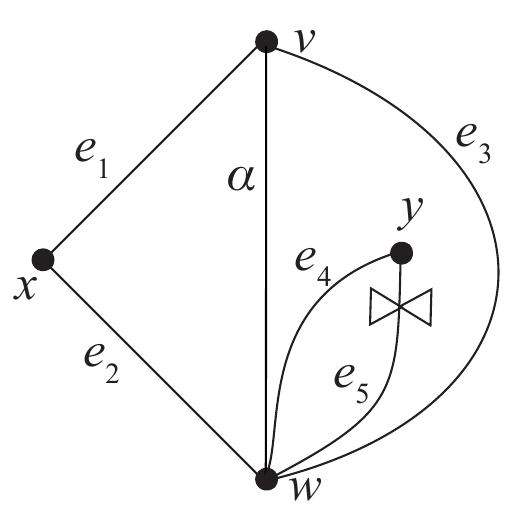}\end{minipage}
\quad $\longrightarrow$ \quad
\begin{minipage}{1.2in}\includegraphics[width=\textwidth]{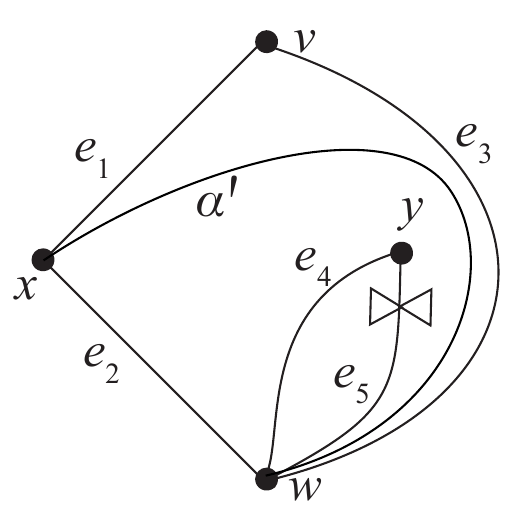}\end{minipage} 
\caption{In Case 4, a type A puzzle piece is glued to a type B puzzle piece along exactly one edge, $\alpha$.  The exchange relation from flipping $\alpha$ is $\alpha \alpha' = e_{1}e_{4}e_{5} + e_{2}e_{3}$ (Lemma \ref{lem:monogon}).}
\label{fig:mutation3}
\end{figure}

Again by compatibility, we denote the tagging of any arc ending at $v$, $w$, and $x$ by $t_{v}$, $t_{w}$, and $t_{x}$, respectively.  Also, exactly one of $e_{4}$ and $e_{5}$ is notched at $y$. 
Thus $\rho(\alpha \alpha') = v^{t_{v}}w^{2t_{w}}x^{t_{x}}\underline{\alpha} \underline{\alpha'}$,
$\rho(e_{1}e_{4}e_{5}) = v^{t_{v}}x^{t_{x}}  w^{2t_{w}} y\underline{e_{1}} \underline{e_{4}}\underline{e_{5}}$, and 
$\rho(e_{2}e_{3}) = v^{t_{v}}w^{2t_{w}}x^{t_{x}}\underline{e_{2}}\underline{e_{3}}$.  

 In $\cC(\Sigma_{g, n})$, application of a skein relation implies $\underline{\alpha} \underline{\alpha'} = \underline{e_{1}}\underline{\gamma_{y}^{w}} + \underline{e_{2}}\underline{e_{3}}$, where $\gamma_{y}^{w}$ is the envelope of the dangle $d_{y}^{w}$ (Definition \ref{def:dangle}). Lemma \ref{lem:monogon} further shows $\underline{\gamma_{y}^{w}} = y\underline{e_{4}}^{2} $, and since  the underlying curves of $e_{4}$ and $e_{5}$ are the same, in fact $\underline{\gamma_{y}^{w}} = y \underline{e_{4}}\,\underline{e_{5}}$. It follows that 
\[
	\rho(\alpha \alpha') 
	= v^{t_{v}}w^{2t_{w}}x^{t_{x}}\underline{\alpha} \underline{\alpha'}
	 = v^{t_{v}}w^{2t_{w}}x^{t_{x}}(\underline{e_{1}} \; y\underline{e_{4}}\underline{e_{5}}+ \underline{e_{2}}\underline{e_{3}}) 
	= \rho(e_{1}e_{4}e_{5} + e_{2}e_{3}).\]

{\sf Case 5.} The arc $\alpha$ is one of two common edges of two puzzle pieces of type A and B. 

Figure \ref{fig:mutation4} shows the two puzzle pieces glued along $\alpha$. 
\begin{figure}[!htpb]
\begin{minipage}{1.2in}\includegraphics[width=\textwidth]{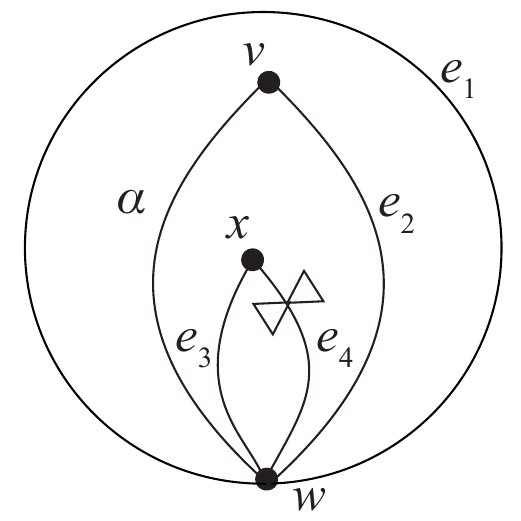}\end{minipage}
\quad $\longrightarrow$ \quad
\begin{minipage}{1.2in}\includegraphics[width=\textwidth]{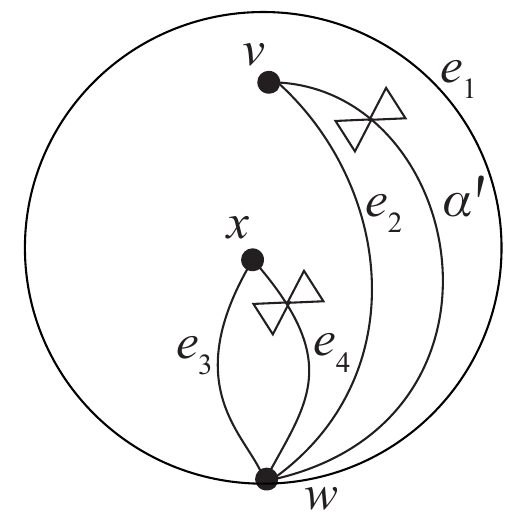}\end{minipage} 
\caption{In Case 5, a type A puzzle is glued to a type B puzzle along two edges, and we flip the one labeled $\alpha$. The exchange relation is $\alpha \alpha' = e_{1} + e_{3}e_{4}$. }
\label{fig:mutation4}
\end{figure}

Note that $\alpha$ and $\alpha'$ have different tags at $v$, and $e_{3}$ and $e_{4}$ have different tags at $x$.  The puncture-skein relation and Lemma \ref{lem:monogon} imply that $v \underline{\alpha}\underline{\alpha'} = \underline{e_{1}} + x\underline{e_{3}}^{2}$.  Since $\underline{e_{3}} = \underline{e_{4}}$, it follows that 
\[
	\rho(\alpha\alpha')  = vw^{2t_{w}}\underline{\alpha}\underline{\alpha'}
	=  w^{2t_{w}}\underline{e_{1}} + x w^{2t_{w}} \underline{e_{3}}\underline{e_{4}} 
	= \rho(e_{1} + e_{3}e_{4}).
\]

{\sf Case 6.} The arc $\alpha$ is the common edge of two puzzle pieces of type A and C. 

Figure \ref{fig:mutation5} shows the two puzzle pieces.  Similarly to the previous cases, 
\[
\begin{split}
	\rho(\alpha \alpha') &= w^{3t_{w}}z^{t_{z}}\underline{\alpha}\underline{\alpha'} = w^{3t_{w}}z^{t_{z}}(\underline{e_{1}}\underline{\gamma_{x}^{w}} + \underline{e_{2}}\underline{\gamma_{y}^{w}}) = w^{3t_{w}}z^{t_{z}}(\underline{e_{1}}x\underline{e_{3}}^{2}+\underline{e_{2}}y\underline{e_{5}}^{2})\\
	&= w^{3t_{w}}z^{t_{z}}(\underline{e_{1}}x\underline{e_{3}}\underline{e_{4}} + \underline{e_{2}}y\underline{e_{5}}\underline{e_{6}}) = \rho(e_{1}e_{3}e_{4}+e_{2}e_{5}e_{6}).
\end{split}
\]
\begin{figure}[!ht]
\begin{minipage}{1.2in}\includegraphics[width=\textwidth]{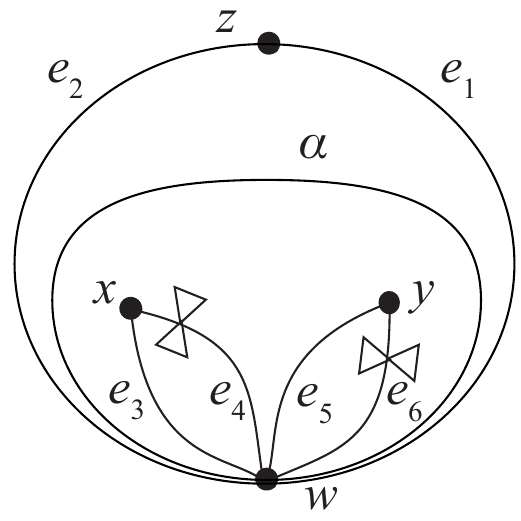}\end{minipage}
\quad $\longrightarrow$ \quad
\begin{minipage}{1.2in}\includegraphics[width=\textwidth]{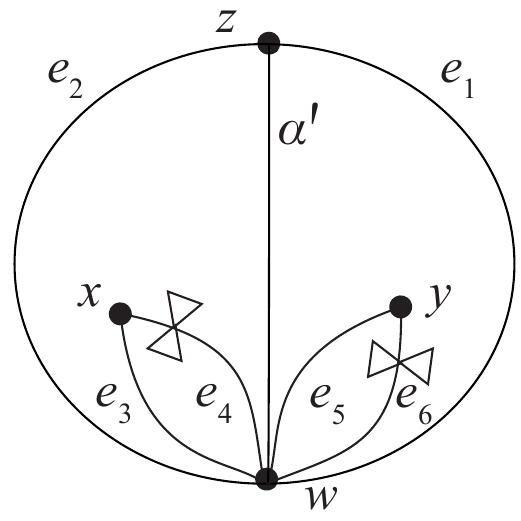}\end{minipage} 
\caption{In Case 6, a type A puzzle piece is glued to a type C puzzle piece along exactly one arc $\alpha$.  The exchange relation is $\alpha\alpha' = e_{1}e_{3}e_{4} + e_{2}e_{5}e_{6}$. }
\label{fig:mutation5}
\end{figure}

{\sf Case 7.} The arc $\alpha$ is the common edge of two puzzle pieces of type B.  

There are two possibilities.  The first one is identical to the right figure in Figure \ref{fig:mutation5}, but where $\alpha'$ plays the role of $\alpha$.  The exchange relation is the same as in Case 6, since the cluster mutation is involutive.  Thus the argument from \textsf{Case 6} applies in this case.  

The second possibility is the one shown in Figure \ref{fig:mutation6}. Then
\[
\begin{split}
	\rho(\alpha\alpha') &= v^{2t_{v}}w^{2t_{w}}\underline{\alpha}\underline{\alpha'} = v^{2t_{v}}w^{2t_{w}}(\underline{e_{1}}\underline{e_{2}} + \underline{\gamma_{x}^{v}}\underline{\gamma_{y}^{w}}) = v^{2t_{v}}w^{2t_{w}}(\underline{e_{1}}\underline{e_{2}} + (x\underline{e_{3}}^2)(y\underline{e_{6}}^2))\\
	&= v^{2t_{v}}w^{2t_{w}}(\underline{e_{1}}\underline{e_{2}} + xy\underline{e_{3}}\underline{e_{4}}\underline{e_{5}}\underline{e_{6}}) = \rho(e_{1}e_{2} + e_{3}e_{4}e_{5}e_{6}).
\end{split}
\]

\begin{figure}[!ht]
\begin{minipage}{1.2in}\includegraphics[width=\textwidth]{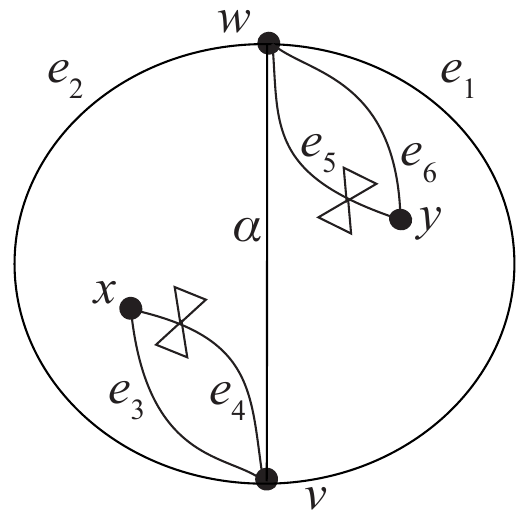}\end{minipage}
\quad $\longrightarrow$ \quad
\begin{minipage}{1.2in}\includegraphics[width=\textwidth]{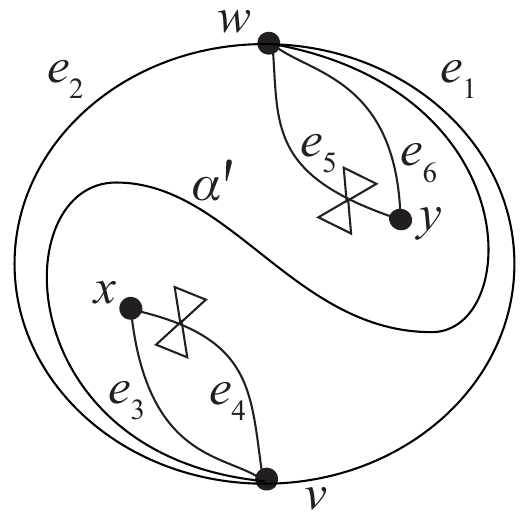}\end{minipage} 
\caption{In Case 7, two puzzle pieces of type B are glued along $\alpha$. In one xe depicted here, the exchange relation is
$\alpha\alpha' = e_{1}e_{2}+e_{3}e_{4}e_{5}e_{6}$.}
\label{fig:mutation6}
\end{figure}

{\sf Case 8.} The arc $\alpha$ is one of two common edges of two puzzle pieces of type B. 

Two puzzle pieces glue together to produce a triangulation for $\Sigma_{0, 4}$.   We distinguish between two subcases, as depicted in Figures~\ref{fig:mutation7} and ~\ref{fig:mutation8}. 

In subcase I shown in Figure \ref{fig:mutation7}, we have
\[
	\rho(\alpha\alpha') = vw^{2t_{w}}\underline{\alpha}\underline{\alpha'} = w^{2t_{w}}(\underline{\gamma_{x}^{w}}+\underline{\gamma_{y}^{w}}) = w^{2t_{w}}(x\underline{e_{1}}\underline{e_{2}} + y\underline{e_{3}}\underline{e_{4}}) = \rho(e_{1}e_{2} + e_{3}e_{4}).
\]
Note that $\underline{\alpha}\underline{\alpha'} = \underline{\gamma_{x}^{w}}+\underline{\gamma_{y}^{w}}$ because it is on $\Sigma_{0, 4}$. In subcase II shown in Figure \ref{fig:mutation8}, we have 
\[
	\rho(\alpha\alpha') = v^{2t_{v}}w^{2t_{w}}\underline{\alpha}\underline{\alpha'} = v^{2t_{v}}w^{2t_{w}}(\underline{\gamma_{x}^{v}}\underline{\gamma_{y}^{w}} + \underline{e_{1}}^{2}) = v^{2t_{v}}w^{2t_{w}}(x\underline{e_{2}}\underline{e_{3}}y\underline{e_{4}}\underline{e_{5}} + \underline{e_{1}}^{2}) = \rho(e_{2}e_{3}e_{4}e_{5} + e_{1}^{2}).
\]

\begin{figure}[!ht]
\begin{minipage}{1.8in}\includegraphics[width=\textwidth]{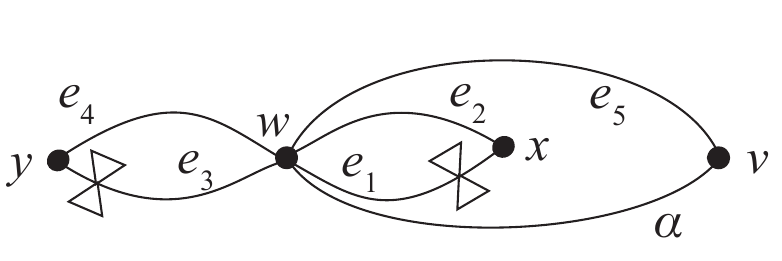}\end{minipage}
\quad $\longrightarrow$ \quad
\begin{minipage}{1.8in}\includegraphics[width=\textwidth]{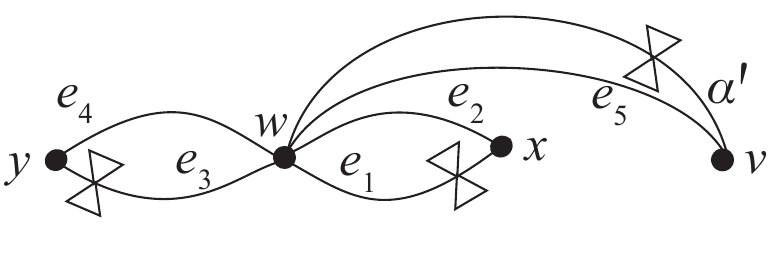}\end{minipage} 
\caption{In subcase I of Case 8, two puzzle pieces are glued to produce a triangulation for $\Sigma_{0,4}$, and 
the exchange relation from flipping $\alpha$ is $\alpha\alpha' = e_{1}e_{2} + e_{3}e_{4}$.  }
\label{fig:mutation7}
\end{figure}

\begin{figure}[!ht]
\begin{minipage}{1.8in}\includegraphics[width=\textwidth]{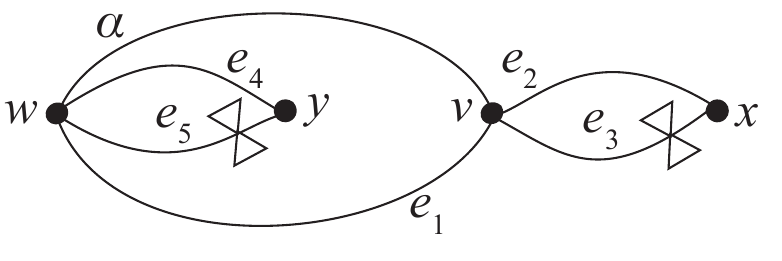}\end{minipage}
\quad $\longrightarrow$ \quad
\begin{minipage}{1.8in}\includegraphics[width=\textwidth]{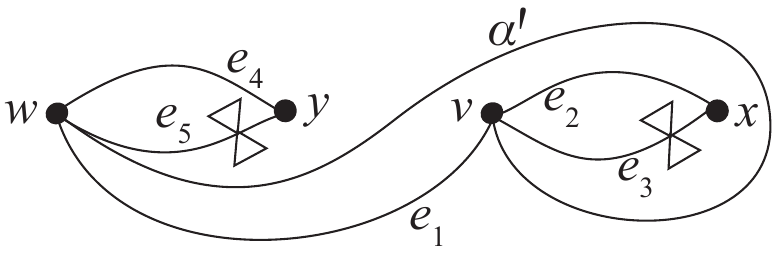}\end{minipage} 
\caption{In subcase II of Case 8, again a triangulation for $\Sigma_{0,4}$ is obtained, and the exchange relation is $\alpha\alpha' = e_{1}^{2}+e_{2}e_{3}e_{4}e_{5}$. The result of the flip is again a union of two puzzle pieces of type B.}
\label{fig:mutation8}
\end{figure}

{\sf Case 9.} The arc $\alpha$ is the common edge of two puzzle pieces of type B and C. 

See Figure \ref{fig:mutation9}.  We have
\[
	\rho(\alpha\alpha') = w^{4t_{w}}\underline{\alpha}\underline{\alpha'} = w^{4t_{w}}(\underline{\gamma_{x}^{w}}\underline{\gamma_{z}^{w}} + \underline{e_{1}}\underline{\gamma_{y}^{w}}) = w^{4t_{w}}(x\underline{e_{2}}\underline{e_{3}}z\underline{e_{6}}\underline{e_{7}}+\underline{e_{1}}y\underline{e_{4}}\underline{e_{5}}) = \rho(e_{2}e_{3}e_{6}e_{7} + e_{1}e_{4}e_{5}).
\]

\begin{figure}[!ht]
\begin{minipage}{1.2in}\includegraphics[width=\textwidth]{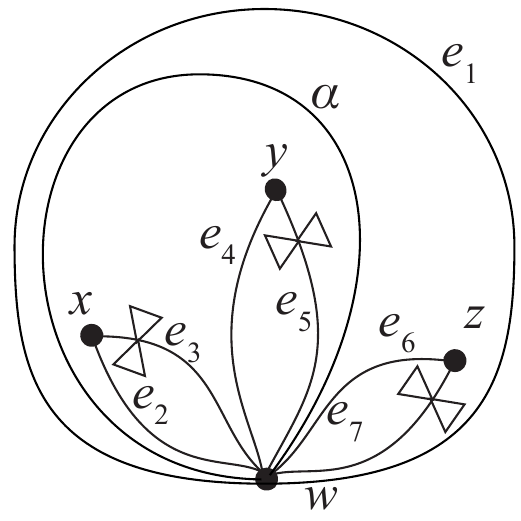}\end{minipage}
\quad $\longrightarrow$ \quad
\begin{minipage}{1.2in}\includegraphics[width=\textwidth]{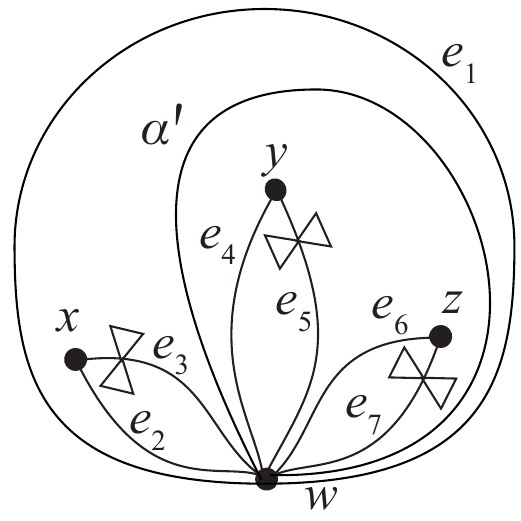}\end{minipage} 
\caption{In Case 9, two puzzle pieces of type B and C are glued along only one edge $\alpha$.  
The exchange relation is $\alpha\alpha' = e_{1}e_{4}e_{5} + e_{2}e_{3}e_{6}e_{7}$. }
\label{fig:mutation9}
\end{figure}

{\sf Case 10.} The arc $\alpha$ is the common edge of two puzzle pieces of type C.

In this situation, the surface must be $\Sigma_{0,5}$.  See Figure \ref{fig:mutation10}. Then
\[
	\rho(\alpha\alpha') = v^{4t_{v}}\underline{\alpha}\underline{\alpha'} = v^{4t_{v}}(\underline{\gamma_{x}^{v}}\underline{\gamma_{z}^{v}} + \underline{\gamma_{y}^{v}}\underline{\gamma_{w}^{v}}) = v^{4t_{v}}(x\underline{e_{1}}\underline{e_{2}}z\underline{e_{5}}\underline{e_{6}}+y\underline{e_{3}}\underline{e_{4}}w\underline{e_{7}}\underline{e_{8}}) = \rho(e_{1}e_{2}e_{5}e_{6} + e_{3}e_{4}e_{7}e_{8}).
\]
\begin{figure}[!ht]
\begin{minipage}{1.2in}\includegraphics[width=\textwidth]{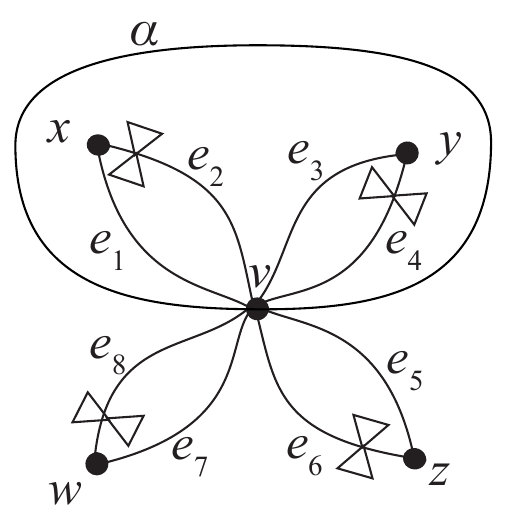}\end{minipage}
\quad $\longrightarrow$ \quad
\begin{minipage}{1.2in}\includegraphics[width=\textwidth]{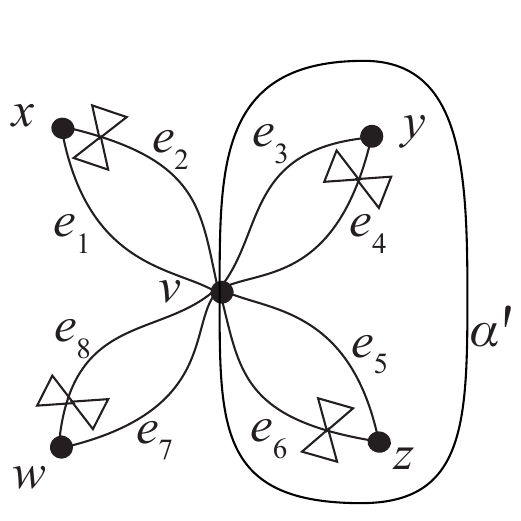}\end{minipage} 
\caption{In Case 10, two puzzle pieces of type C are glued along two edges, and the one labeled $\alpha$ is flipped. 
The exchange relation is $\alpha\alpha' = e_{1}e_{2}e_{5}e_{6} + e_{3}e_{4}e_{7}e_{8}$. 
}
\label{fig:mutation10}
\end{figure}

{\bf Step 2.} Suppose that $\alpha$ is on a dangle. 

Any dangle must be contained inside one of the tagged puzzle pieces in Figure \ref{fig:taggedpuzzle}. Suppose first that $\alpha$ is notched at the jewel. The mutation of $\alpha$ in a puzzle of type B is the inverse of the flip described in \textsf{Case 2} and Figure \ref{fig:mutation2} (and $\alpha'$ in Figure \ref{fig:mutation2} plays the role of $\alpha$). Since the mutation is an involution, the compatibility follows from \textsf{Case 2}. In the case of a puzzle of type C, the mutation is the inverse of the flip in \textsf{Case 5} and Figure \ref{fig:mutation4}. In the case of type D, it is the inverse of the flip in subcase I of \textsf{Case 8} and Figure \ref{fig:mutation7}. This takes care of all situations where $\alpha$ is on a dangle. If $\alpha$ is tagged plainly at the jewel, then the only difference is that, in the flipped diagram, one needs to change the tagging at the vertex which was the jewel. The rest of the computation is identical. 
\end{proof}

\begin{remark}\label{rmk:rhoRcoeff}
By tensoring a commutative ring $R$, we obtain 
\[
	\rho_{R} : \cA(\Sigma_{g, n})_{R} \to \cC(\Sigma_{g, n})_{R}.
\]
\end{remark}

We complete the proof of Compatibility Lemma by showing that $\rho$ is injective. Indeed, we will show that for any integral domain $R$, $\rho_{R}$ in Remark \ref{rmk:rhoRcoeff} is injective.

Roger-Yang's homomorphism $\Phi : \cC(\Sigma_{g, n}) \to C^{\infty}(\cT^{d}(\Sigma_{g, n}))$ will factor in our proofs coming up.  We here present a slightly different version that we find easier to apply. See \cite[Section 3]{MoonWong21} for details.  

\begin{lemma} \label{lemma:phihat}
Let $R$ be an integral domain. Suppose $\cT$ is an ideal triangulation of $\Sigma_{g, n}$, and let $E= \{e_{i} \}_{i= 1}^{m}$ denote its set of edges. Then there is a well-defined homomorphism $\hat \Phi_{R}: \cC(\Sigma_{g, n})_{R} \to Q(R)(e_i)$, where $Q(R)$ is the field of fraction of $R$.
\end{lemma}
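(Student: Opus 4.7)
The plan is to mimic the construction of the Roger-Yang homomorphism $\Phi$, but instead of targeting smooth functions on $\cT^{d}(\Sigma_{g,n})$, to target the rational function field $Q(R)(e_i)$ directly. The key input is Penner's theorem that once the ideal triangulation $\cT$ is fixed, the edge $\lambda$-lengths are algebraically independent and every $\lambda$-length of an arc or loop, as well as every horocycle length at a vertex, can be expressed as a rational function with integer coefficients in the edge $\lambda$-lengths via iterated Ptolemy relations.

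Concretely, I would define $\hat\Phi_R$ on the generators of $\cC(\Sigma_{g,n})_R$ as follows. Each edge $e_i \in E$ maps to itself in $Q(R)(e_i)$. For an arc $\alpha \notin E$, I iterate the skein relation of Definition~\ref{def:curvealgebra} across transverse intersections with $\cT$ to obtain a Laurent polynomial in $\{e_i\}$, which is precisely the Ptolemy expansion of the $\lambda$-length of $\alpha$. For a loop $\ell$, I apply the skein relation at each intersection with $\cT$ to reduce $\ell$ to a polynomial in arcs, then further reduce by the previous rule. For a vertex $v_j$, I take $\hat\Phi_R(v_j)$ to be Penner's horocycle-length formula, a sum of rational monomials over the corners of $\cT$ based at $v_j$. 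Since this is a nonzero element of $Q(R)(e_i)$, its formal inverse $v_j^{-1}$ maps to the genuine inverse in $Q(R)(e_i)$.

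With this definition, two things must be verified: (i) the assignment on each generator is independent of the choice of Ptolemy expansion order used in the recursion, and (ii) the four defining relations of $\cC(\Sigma_{g,n})_R$ are preserved. Both reduce to universal polynomial identities in $\ZZ[e_i^{\pm}]$. For (i), the result holds over $\CC$ because the $\lambda$-length is a well-defined function on $\cT^{d}(\Sigma_{g,n})$ and the edge $\lambda$-lengths are algebraically independent, so any two expansions of the same arc yield the same element of $\CC(e_i)$; since the expansions have integer coefficients, the identity descends to $\ZZ(e_i)$, and hence to $Q(R)(e_i)$ for any integral domain $R$. For (ii), the skein, puncture-skein, framing, and puncture-framing relations are exactly the Ptolemy-type identities that govern Penner's $\lambda$-lengths, so they hold as integer-coefficient identities in $\ZZ[e_i^{\pm}]$ and hence in $Q(R)(e_i)$.

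The main obstacle is the well-definedness in (i): controlling that the recursive resolution of an arc or loop does not depend on the sequence of expansions. Once this is handled by passing to the characteristic-zero setting and invoking the algebraic independence of edge $\lambda$-lengths, the verification of the four relations in (ii) is a mechanical check of integer-coefficient identities, and the multiplicative extension of $\hat\Phi_R$ from the generating set to all of $\cC(\Sigma_{g,n})_R$ is automatic.
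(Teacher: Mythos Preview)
Your proposal is correct and arrives at the same map, but it reconstructs from scratch what the paper obtains by citation. The paper's argument is structurally shorter: it starts from the already-established Roger-Yang homomorphism $\Phi : \cC(\Sigma_{g,n}) \to C^{\infty}(\cT^{d}(\Sigma_{g,n}))$ of \cite{RogerYang14}, invokes \cite[Lemma~3.3]{MoonWong21} to see that $\Phi$ in fact lands in $\ZZ[\lambda_i^{\pm 1}]$, uses Penner's homeomorphism $\cT^{d}(\Sigma_{g,n}) \cong \RR^{m}_{>0}$ to conclude that the $\lambda_i$ are algebraically independent, and then simply renames $\lambda_i \mapsto e_i$ via the resulting isomorphism $\ZZ[\lambda_i^{\pm}] \cong \ZZ[e_i^{\pm}]$ and tensors with $R$. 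In other words, your steps (i) and (ii)---independence of the resolution order and compatibility with the four curve-algebra relations---are precisely the content of \cite[Theorem~1.2]{RogerYang14} together with \cite[Lemma~3.3]{MoonWong21}, which the paper takes as given. Your route has the merit of being self-contained and of making explicit where the integrality of coefficients enters, so that the passage to an arbitrary integral domain $R$ is transparent; the paper's route is more economical but detours through the analytic object $C^{\infty}(\cT^{d})$ before returning to pure algebra. One small imprecision in your sketch: a loop $\ell$ is not resolved by applying the skein relation at its intersections with $\cT$ directly, but rather by multiplying $\ell$ by suitable edges and then resolving the resulting crossings---this is how \cite[Lemma~3.2]{MoonWong21} proceeds, and your recursion should be phrased that way.
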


\begin{proof}
We first consider $R = \ZZ$ case. The map $\Phi$ sends each arc $e_{i}$ in $\cT$ to the function $\lambda_{i}$ on the decorated Teichm\"uller space $\cT^{d}(\Sigma_{g, n})$ that gives the lambda-length of $e_{i}$. It follows by \cite[Lemma 3.3]{MoonWong21} that $\Phi$ factors through $\Phi : \cC(\Sigma_{g, n}) \to \ZZ[\lambda_{i}^{\pm 1}]$. By tensoring a general integral domain $R$, we obtain a similar map $\Phi_{R} : \cC(\Sigma_{g, n})_{R} \to R[\lambda_{i}^{\pm 1}]$.

The decorated Teichm\"uller space $\cT^{d}(\Sigma_{g, n})$ is homeomorphic to $\RR^{m}_{> 0}$ and the homeomorphism maps each decorated hyperbolic metric $(m, r)$ to the lambda-lengths $\{\lambda_{i}\}$ of $\{e_{i}\}$ \cite[Theorem 3.1]{Penner87}. Thus, $\cT^{d}(\Sigma_{g, n})$ is a Zariski-dense semialgebraic set in an $n$-dimensional complex torus $\spec \CC[\lambda_{i}^{\pm}] \cong (\CC^{*})^{m}$. Therefore, $\{\lambda_{i}\}$ is a set of algebraically independent elements. Hence there is a well-defined, canonical isomorphism $\tau : \ZZ[\lambda_{i}^{\pm}] \cong \ZZ[e_{i}^{\pm}]$ that maps $\lambda_{i}$ to $e_{i}$. By tensoring $R$, we obtain $\tau_{R} : R[\lambda_{i}^{\pm}] \cong R[e_{i}^{\pm}]$. Then composition of $\tau_{R}$ and $\Phi_{R}$ followed by the canonical inclusion $R[e_{i}^{\pm}] \subset Q(R)(e_{i})$ yields $\hat \Phi_{R}$.  
\end{proof}  

\begin{proposition} \label{prop:inj}
Let $R$ be an integral domain. The algebra homomorphism $\rho_{R} : \cA(\Sigma_{g, n})_{R} \to \cC(\Sigma_{g, n})_{R}$ is injective. 
\end{proposition}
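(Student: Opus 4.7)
The strategy is to compose $\rho_R$ with the map $\hat\Phi_R$ from Lemma \ref{lemma:phihat} and show that the resulting composition coincides with the tautological inclusion $\iota: \cA(\Sigma_{g,n})_R \hookrightarrow \cF$, where $\cF = Q(R)(e_1, \ldots, e_m)$ is the ambient field of the cluster algebra. Since $\iota$ is visibly injective, this will force $\rho_R$ itself to be injective.

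Concretely, I would fix an ordinary ideal triangulation $\cT = \{e_1, \ldots, e_m\}$ of $\Sigma_{g,n}$ with no self-folded triangles (such a triangulation exists by the remarks in Section \ref{ssec:notation}, and as a consequence no edge of $\cT$ cuts out a once-punctured monogon, so the all-plain tagging $\tau(\cT)$ is a valid tagged triangulation in the sense of Definition \ref{def:taggedtriangulation}). The corresponding seed serves as the initial seed for $\cA(\Sigma_{g,n})_R$, and $\hat\Phi_R$ from Lemma \ref{lemma:phihat} is built using the same $\cT$. Both $\rho_R$ and $\hat\Phi_R$ are $R$-algebra homomorphisms, so the composition $\hat\Phi_R \circ \rho_R$ is as well; since the tagged arcs generate $\cA(\Sigma_{g,n})_R$ as an $R$-algebra, it suffices to verify that $(\hat\Phi_R \circ \rho_R)(x) = x$ in $\cF$ for every cluster variable~$x$.

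I would proceed by induction on the length of a mutation sequence needed to reach $x$ from the initial cluster. For the base case, each edge $e_i$ of $\cT$ is plainly tagged at both ends, so $\rho_R(e_i) = \underline{e_i}$; by the construction of $\hat\Phi_R$ in Lemma \ref{lemma:phihat}, $\hat\Phi_R(\underline{e_i}) = e_i$, which gives $(\hat\Phi_R \circ \rho_R)(e_i) = e_i$ as required. For the inductive step, suppose $x$ arises by flipping the $k$-th entry of a cluster $\{x_1, \ldots, x_m\}$ already known to satisfy $(\hat\Phi_R \circ \rho_R)(x_i) = x_i$. The cluster exchange relation $x_k \, x = A + B$, with $A$ and $B$ monomials in $\{x_i : i \neq k\}$, is preserved by the well-defined ring homomorphism $\rho_R$ of Proposition \ref{prop:welldef}; applying the ring homomorphism $\hat\Phi_R$ and invoking the inductive hypothesis on $x_k$, $A$, and $B$ yields
\[
        x_k \cdot (\hat\Phi_R \circ \rho_R)(x) \;=\; A + B \;=\; x_k \cdot x
\]
in $\cF$. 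Since $x_k$ is a nonzero element of the field $\cF$, cancelling produces $(\hat\Phi_R \circ \rho_R)(x) = x$, closing the induction.

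The main obstacle is really setting up the base case correctly: one needs an initial triangulation all of whose edges can simultaneously carry the plain tag at both endpoints, which forces the absence of self-folded triangles (and, in the once-punctured case, the restriction to the all-plain component of $\bE^{\bowtie}$ that was already adopted in Section \ref{ssec:taggedtriangulations}). Once that choice is in place, everything else is formal: Proposition \ref{prop:welldef} guarantees that $\rho_R$ intertwines the cluster exchange relations with skein-type identities in $\cC(\Sigma_{g,n})_R$, and the nonvanishing of cluster variables in $\cF$ permits the cancellation that drives the induction, thereby reducing the injectivity of $\rho_R$ to the tautological injectivity of $\cA(\Sigma_{g,n})_R \hookrightarrow \cF$.
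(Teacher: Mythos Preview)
Your proposal is correct and follows essentially the same strategy as the paper: both arguments factor $\rho_R$ through the Roger--Yang map $\hat\Phi_R$ of Lemma~\ref{lemma:phihat}, identify the composite $\hat\Phi_R\circ\rho_R$ with the tautological inclusion $\iota:\cA(\Sigma_{g,n})_R\hookrightarrow Q(R)(e_i)$, and conclude injectivity of $\rho_R$ from that of $\iota$. The only cosmetic difference is in how the identity $\hat\Phi_R\circ\rho_R=\iota$ is propagated from the initial cluster to all of $\cA$: the paper appeals in one line to the Laurent phenomenon (each element of $\cA$ satisfies a polynomial relation $x\cdot\prod e_i^{a_i}=P(e_1,\dots,e_m)$, so agreement on the $e_i$ forces agreement on $x$), whereas you run an explicit induction on mutation distance using the exchange relations and cancellation in the field $\cF$. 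Your added care in choosing $\cT$ without self-folded triangles, so that every initial edge is genuinely plain-tagged and $\rho_R(e_i)=\underline{e_i}$ holds on the nose, makes the base case cleaner than the paper's terser formulation.
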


\begin{proof}
We fix an ordinary triangulation $\cT$ on $\Sigma_{g, n}$. Let $E = \{e_{i}\}$ be the set of edges in $\cT$. There is a commutative diagram
\[
	\xymatrix{\cA(\Sigma_{g, n})_{R} \ar[rr]^{\rho_{R}} \ar[rd]_{\iota} && \cC(\Sigma_{g, n})_{R} \ar[ld]^{\hat\Phi_{R}}\\ &Q(R)(e_{i}).}
\]
Here $\iota$ is the natural inclusion of the cluster algebra $\cA(\Sigma_{g, n})_{R}$ into its field of fraction, and $\rho_{R}$ is the homomorphism in Remark \ref{rmk:rhoRcoeff}.  The map $\hat \Phi_{R}$ is the Roger-Yang homomorphism from Lemma \ref{lemma:phihat}. For each $e_{i}$,  we have $\iota (e_i) = \hat \Phi_{R} \circ \rho (e_i)$. It follows that $\iota = \hat \Phi_{R} \circ \rho_{R}$, since all of the elements of $\cA(\Sigma_{g, n})_{R}$ can be written as a Laurent polynomial with respect to the cluster variables $e_i$ in a fixed cluster. Since $\iota$ is injective, $\rho_{R}$ must also be injective. 
\end{proof}


\section{Integrality of $\cC(\Sigma_{g, n})$ and its implications }\label{sec:impcurve}

This section is mainly devoted to a proof of integrality of $\cC(\Sigma_{g, n})$ using the injective homomorphism $\rho:\cA(\Sigma_{g, n}) \to \cC(\Sigma_{g, n})$ and techniques from algebraic geometry, in particular dimension theory. For the definition and basic properties of the dimension of algebraic varieties, see \cite[Section 8]{Eisenbud95}. We will need the following lemma from commutative algebra.

Let $k$ be a field and let $R$ be a $k$-algebra, which is an integral domain. The \emph{(Krull) dimension} $\dim R$ of $R$ is the maximal length $\ell$ of the strictly increasing chain of prime ideals $0 = P_{0} \subsetneq P_{1} \subsetneq P_{2} \subsetneq \cdots \subsetneq P_{\ell}$ of $R$. For the associated affine scheme $\spec R$, its dimension is defined as $\dim \spec R = \dim R$. 

\begin{lemma}\label{lem:transdegreeanddim}
Let $k$ be a field and let $R$ be a $k$-algebra, which is an integral domain. Let $Q(R)$ be its field of fractions. Suppose that the transcendental degree $\mathrm{trdeg}_{k}Q(R)$ of $Q(R)$ is $m$. Then $\dim R \le m$. 
\end{lemma}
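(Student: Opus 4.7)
The plan is to establish the stronger and cleaner fact that whenever $P \subsetneq Q$ are prime ideals of $R$, the transcendence degrees strictly satisfy $\mathrm{trdeg}_{k} Q(R/Q) < \mathrm{trdeg}_{k} Q(R/P)$, and then iterate this along a maximal chain of primes. Since transcendence degree is nonnegative, a chain $0 = P_{0} \subsetneq P_{1} \subsetneq \cdots \subsetneq P_{\ell}$ of length $\ell$ would then force
\[
m \;=\; \mathrm{trdeg}_{k} Q(R) \;>\; \mathrm{trdeg}_{k} Q(R/P_{1}) \;>\; \cdots \;>\; \mathrm{trdeg}_{k} Q(R/P_{\ell}) \;\geq\; 0,
\]
yielding $\ell \leq m$ and hence $\dim R \leq m$.

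For the key inequality, I would first pass to $R/P$, so one may assume $P = 0$ (so $R$ is a domain) and $Q$ is a nonzero prime. Set $r = \mathrm{trdeg}_{k} Q(R/Q)$ and choose a transcendence basis $\bar{y}_{1}, \ldots, \bar{y}_{r}$ of $Q(R/Q)$ over $k$. Lift each $\bar{y}_{i}$ to some $y_{i} \in R$ and pick any nonzero $a \in Q$. The goal is to show that $\{y_{1}, \ldots, y_{r}, a\} \subset Q(R)$ is algebraically independent over $k$, which would upgrade $\mathrm{trdeg}_{k} Q(R)$ to at least $r+1$. Suppose for contradiction there is a nonzero $f \in k[Y_{1}, \ldots, Y_{r}, Z]$ with $f(y_{1}, \ldots, y_{r}, a) = 0$, and write $f = \sum_{j \geq 0} f_{j}(Y_{1}, \ldots, Y_{r}) Z^{j}$. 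Reducing modulo $Q$ kills every term with $j \geq 1$ (since $a \in Q$), leaving $f_{0}(\bar{y}_{1}, \ldots, \bar{y}_{r}) = 0$ in $R/Q$; by algebraic independence of the $\bar{y}_{i}$, we get $f_{0} = 0$. Hence $Z \mid f$, so $f = Z \cdot g$ with $g \neq 0$; and then $a \cdot g(y_{1}, \ldots, y_{r}, a) = 0$ in the domain $R$ with $a \neq 0$ gives $g(y_{1}, \ldots, y_{r}, a) = 0$. Induction on $\deg_{Z} f$ produces a nontrivial algebraic relation among the $y_{i}$ alone, contradicting the independence of the $\bar{y}_{i}$ downstairs.

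The main obstacle is ensuring this works without any Noetherian or finite-generation hypothesis on $R$, since the lemma is stated in full generality. The argument above circumvents this by using only finitely many elements (a transcendence basis plus one element of $Q$) and a straightforward induction on $\deg_{Z} f$; the cancellation of $a$ is legitimate precisely because $R$ is a domain, which is part of the hypothesis. With this claim in hand, the iteration on any chain of primes gives the desired bound.
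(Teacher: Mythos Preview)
Your proof is correct, but it takes a genuinely different route from the paper's. The paper reduces to the finitely generated case: given a chain $0 = P_{0} \subsetneq \cdots \subsetneq P_{\ell}$, it picks $x_{i} \in P_{i} \setminus P_{i-1}$, forms the finitely generated subalgebra $R' = k[x_{1}, \ldots, x_{\ell}]$, and observes that the contracted chain $P_{i}' = P_{i} \cap R'$ is still strictly increasing, so $\ell \le \dim R' \le \mathrm{trdeg}_{k} Q(R') \le m$ by the classical result for finitely generated $k$-algebras. Your argument instead proves the underlying mechanism directly: each proper specialization $R/P \twoheadrightarrow R/Q$ strictly drops transcendence degree, via the explicit construction of an $(r+1)$-element algebraically independent set.

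What each approach buys: the paper's argument is shorter if one is willing to cite the Noether normalization/dimension theory for affine $k$-algebras as a black box, and it makes transparent why no finiteness hypothesis on $R$ is needed (only finitely many witnesses $x_i$ matter). Your argument is more self-contained and does not invoke that theory at all; it is essentially the standard proof that height plus coheight is bounded by transcendence degree, carried out by hand. One small point worth making explicit in your write-up: when you choose a transcendence basis $\bar{y}_{1},\ldots,\bar{y}_{r}$ of $Q(R/Q)$ and then lift to $R$, you are tacitly using that such a basis can be taken inside $R/Q$ rather than in the fraction field. This is standard (any maximal algebraically independent subset of $R/Q$ is a transcendence basis of $Q(R/Q)$), but since the whole point of the lemma is to avoid finiteness assumptions, it is worth a sentence.
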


\begin{proof}
When $R$ is a finitely generated algebra, the statement is well known \cite[Theorem A, p.221]{Eisenbud95}. We assume that $R$ is not finitely generated. 

Take a chain of prime ideals $0 = P_{0} \subsetneq P_{1} \subsetneq P_{2} \subsetneq \cdots \subsetneq P_{\ell}$ of $R$. For each $1 \le i \le \ell$, pick $x_{i} \in P_{i} \setminus P_{i-1}$. Let $R'$ be the subalgebra of $R$ generated by $\{x_{i}\}$, and $Q(R') \subset Q(R)$ be its field of fractions. Since $R'$ is a finitely generated algebra, $\dim R' \le \mathrm{trdeg}_{k}Q(R') \le \mathrm{trdeg}_{k}Q(R) = m$. 

On the other hand, if we set $P_{i}' = P_{i} \cap R'$, the sequence $P_{0}' \subset P_{1}' \subset P_{2}' \subset \cdots \subset P_{k}'$ is an increasing sequence of prime ideals, and it is strictly increasing as $x_{i} \in P_{i}' \setminus P_{i-1}'$. Therefore, $\dim R' \ge \ell$, so we have $\ell \le m$. This is valid for arbitrary increasing chains of prime ideals, we obtain the desired result. 
\end{proof}

We are now ready for the proof of integrality. 

\begin{theorem} \label{thm:intdom}
Suppose that $\chi(\Sigma_{g, n}) = 2 - 2g - n < 0$ and $n > 0$. Then $\cC(\Sigma_{g,n})$ is an integral domain.  
\end{theorem}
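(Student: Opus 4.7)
The plan is to reduce via Remark~\ref{rem:torsionfree} to showing $R := \cC(\Sigma_{g,n})_\CC$ is an integral domain, and to prove this by showing that the Roger--Yang homomorphism $\hat{\Phi}_\CC : R \to \CC(e_i)$ from Lemma~\ref{lemma:phihat} is injective. Since $\CC(e_i)$ is a field, $P := \ker \hat{\Phi}_\CC$ is automatically prime; injectivity is equivalent to $P = (0)$, and would embed $R$ into a field.

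The first step uses dimension theory to locate $P$ as a minimal prime of $R$. From the factorization $\iota = \hat{\Phi}_\CC \circ \rho_\CC$ of Proposition~\ref{prop:inj}, the quotient $R/P$ contains a copy of $\cA(\Sigma_{g,n})_\CC$, whose fraction field is $\CC(e_i)$. Since $R/P$ is a finitely generated integral $\CC$-algebra (using Theorem~\ref{thm:finitegeneration}), this forces $\dim R/P = \mathrm{trdeg}_\CC \CC(e_i) = m$, where $m = 6g - 6 + 3n$ is the number of edges in an ideal triangulation. For the reverse bound $\dim R \le m$, I would apply Lemma~\ref{lem:transdegreeanddim} to each minimal prime quotient $R/Q$: fixing an ideal triangulation with edges $e_1, \ldots, e_m$, the skein identities handled case-by-case in the proof of Proposition~\ref{prop:welldef} express every other arc as a rational function of $e_1, \ldots, e_m$; the puncture-skein and framing relations do the analogous job for loops; and Lemma~\ref{lem:monogon} expresses each vertex class $v_j$ rationally as $\underline{\gamma}/\underline{e}^2$. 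Hence in $Q(R/Q)$ every generator lies in the subfield generated by the images of the $e_i$, so $\mathrm{trdeg}_\CC Q(R/Q) \le m$ and therefore $\dim R \le m$. Combined, $P$ is a minimal prime of $R$.

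The main obstacle is to upgrade ``$P$ is a minimal prime'' to ``$P = (0)$'', since a priori $R$ might have other minimal primes or nonzero nilpotents. I anticipate closing this gap either by establishing directly that $R$ is reduced and irreducible --- for instance by exhibiting the $\CC$-linear basis of $R$ given by simple multicurves decorated with monomials in the $v_j^{\pm 1}$ constructed by Roger and Yang, and verifying that $\hat{\Phi}_\CC$ sends distinct basis elements to $\CC$-linearly independent products of $\lambda$-length functions --- or by a geometric identification of $\spec R$ with an irreducible, reduced moduli space (a decorated character variety) that contains the decorated Teichm\"uller space $\cT^d(\Sigma_{g,n})$ as a Zariski-dense subset, so that the specialization map corresponding to $\hat{\Phi}_\CC$ must be a generic point.
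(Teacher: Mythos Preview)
Your proposal shares the dimension-theoretic spirit of the paper's proof, but it has two genuine gaps that the paper's argument is specifically designed to avoid.

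\textbf{The bound $\dim R \le m$ is not justified.} Your argument for step~5 is that for each minimal prime $Q$ of $R$, every generator of $R/Q$ lies in the subfield of $Q(R/Q)$ generated by the images $\bar e_i$. But the skein identities only give relations of the form $M_x\cdot x = f_x(e_1,\dots,e_m)$ in $R$, where $M_x$ is a monomial in the $e_i$. Passing to $R/Q$, if $\bar M_x = 0$ (which you cannot rule out for an arbitrary minimal prime), the identity degenerates to $0 = f_x(\bar e_i)$ and says nothing about $\bar x$. So $\bar x$ need not be algebraic over $\CC(\bar e_1,\dots,\bar e_m)$, and the transcendence-degree bound fails. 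In other words, Lemma~\ref{lem:transdegreeanddim} applies cleanly only once you already know you are inside a single domain where the $e_i$ are invertible.

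\textbf{The ``main obstacle'' is the whole theorem.} Your first proposed fix---showing that $\hat\Phi_\CC$ sends the Roger--Yang multicurve basis to linearly independent elements---is literally the statement that $\hat\Phi_\CC$ is injective, i.e.\ Conjecture~\ref{conj:RogerYang}. The second fix (an irreducible reduced moduli-space model for $\spec R$) is not something available in the literature; producing it would be a result at least as strong as the theorem itself.

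The paper sidesteps both problems with one move: rather than bounding $\dim R$ from above directly, it constructs a \emph{surjection} $\widetilde\rho_\CC:\widetilde\cA(\Sigma_{g,n})_\CC \twoheadrightarrow R$ from an algebra $\widetilde\cA(\Sigma_{g,n})_\CC$ that is, by construction, a subring of $\CC(e_i)$ and hence already an integral domain. Lemma~\ref{lem:transdegreeanddim} then gives $\dim\widetilde\cA(\Sigma_{g,n})_\CC \le m$ with no case analysis over minimal primes. For the lower bound the paper invokes \cite[Lemma~3.4]{MoonWong21}: the localization $S^{-1}R$ at the edge monomials is \emph{isomorphic} to $\CC[e_i^{\pm}]$, so $\spec R$ contains an open $m$-torus. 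These two bounds force $\ker\widetilde\rho_\CC = 0$, whence $R \cong \widetilde\cA(\Sigma_{g,n})_\CC$ is a domain. The localization isomorphism is exactly the input that replaces your unproven ``$P$ is the only minimal prime and $R$ is reduced'' step.
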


\begin{proof}
To start, assume that $\Sigma_{g, n}$ is not a $3$-puncture sphere, so that $\cA(\Sigma_{g, n})$ is defined. As before, we fix an ordinary ideal triangulation $\cT$ on $\Sigma_{g, n}$, and let $E = \{e_{i}\}$ denote the edges of the triangulation. 

By \cite[Lemma 3.2]{MoonWong21}, every element in $\cC(\Sigma_{g, n})$ can be written as a rational function (indeed a Laurent polynomial) with respect to the edge classes $\{e_{i}\}$ in $\cT$. In particular, for any $x \in \cC(\Sigma_{g, n}) \setminus \rho(\cA(\Sigma_{g, n}))$, there is a rational function $f(e_{i})/g(e_{i})$ with respect to $\{e_{i}\}$, such that $x = f(e_{i})/g(e_{i})$. Indeed, the numerator $f$ is not a zero polynomial, because it is given by the trace of a product of matrices whose coefficients are edge classes \cite[Theorem 3.22]{RogerYang14}. Then we can construct a ring extension $\cA(\Sigma_{g, n})' := \cA(\Sigma_{g, n})[t]/(t - f/g)$ and an extended homomorphism $\rho' : \cA(\Sigma_{g, n})' \to \cC(\Sigma_{g, n})$, which maps $t \mapsto x$. Since $t  = f/g \in \QQ(e_{i})$, $\cA(\Sigma_{g, n})'$ is also a subring of $\QQ(e_{i})$. We may repeat this procedure and extend the algebra $\cA(\Sigma_{g, n})'$ further, until the extended map is surjective. Since $\cC(\Sigma_{g, n})$ is a finitely generated algebra (Theorem \ref{thm:finitegeneration}), this procedure is terminated in finitely many steps. Therefore, we obtain a ring extension $\widetilde{\cA}(\Sigma_{g, n})$ of $\cA(\Sigma_{g, n})$ in $\QQ(e_{i})$ and a surjective homomorphism $\widetilde{\rho} : \widetilde{\cA}(\Sigma_{g, n}) \to \cC(\Sigma_{g, n})$. 

Combined with the Roger-Yang homomorphism from Lemma \ref{lemma:phihat} (with $\ZZ$-coefficient), we have the commutative diagram 
\[
	\xymatrix{\cA(\Sigma_{g, n}) \ar[d] \ar[r]^{\rho} & \cC(\Sigma_{g, n}) \ar[d]^{\hat \Phi}\\ \widetilde{\cA}(\Sigma_{g, n}) \ar[ru]^{\widetilde{\rho}}\ar[r] & \QQ(e_{i}).}
\]

Note that $\widetilde{\cA}(\Sigma_{g, n})$ is an integral domain, as it is a subring of $\QQ(e_{i})$. So is $\widetilde{\cA}(\Sigma_{g, n})_{\CC} := \widetilde{\cA}(\Sigma_{g, n}) \otimes_{\ZZ}\CC \subset \QQ(e_{i}) \otimes_{\ZZ}\CC = \CC(e_{i})$.  So the associated affine scheme $\widetilde{\cA}(\Sigma_{g, n})_{\CC}$ is integral (irreducible and reduced). If we denote the number of edges in $\cT$ by $m$, then the transcendental degree is $\mathrm{trdeg}_{\CC}(\CC(e_{i})) = m$. Since the field of fractions of $\widetilde{\cA}(\Sigma_{g, n})_{\CC}$ is also $\CC(e_{i})$, by Lemma \ref{lem:transdegreeanddim}, $\dim \spec \widetilde{\cA}(\Sigma_{g, n})_{\CC} \le m$. 

Since $\widetilde{\rho} : \widetilde{\cA}(\Sigma_{g, n}) \to \cC(\Sigma_{g, n})$ is a surjective homomorphism, so is $\widetilde{\rho}_{\CC} : \widetilde{\cA}(\Sigma_{g, n})_{\CC} := \widetilde{\cA}(\Sigma_{g, n}) \otimes_{\ZZ}\CC \to \cC(\Sigma_{g, n})_{\CC}$. If we denote $\ker \widetilde{\rho}_{\CC} = I$, then $\widetilde{\cA}(\Sigma_{g, n})_{\CC}/I \cong \cC(\Sigma_{g, n})_{\CC}$. Then $\spec \cC(\Sigma_{g, n})_{\CC}$ is a closed subscheme of $\spec \widetilde{\cA}(\Sigma_{g, n})_{\CC}$, defined by the ideal $I$. Thus
\[
	\dim \spec \cC(\Sigma_{g, n})_{\CC} \le \dim \spec \widetilde{\cA}(\Sigma_{g, n})_{\CC} \le m
\]
and if $I$ is nontrivial, then $\dim \spec \cC(\Sigma_{g, n})_{\CC} <  \dim \spec \widetilde{\cA}(\Sigma_{g, n})_{\CC}$.

Recall from the proof of Lemma \ref{lemma:phihat} that $\hat \Phi_{\CC} : \cC(\Sigma_{g, n})_{\CC} \to \CC(e_{i})$ is a composition of the map $\cC(\Sigma_{g, n})_{\CC} \to \CC[\lambda_{i}^{\pm}] \to \CC[e_{i}^{\pm}] \subset \CC(e_{i})$. Every element in $\cC(\Sigma_{g, n})_{\CC}$ can be written as a Laurent polynomial with respect to $\{e_{i}\}$ \cite[Lemma 3.2]{MoonWong21}, so if we denote $S$ by the multiplicative set of monomials with respect to $\{e_{i}\}$, then there is a localized morphism $S^{-1}\cC(\Sigma_{g, n})_{\CC} \to \CC[\lambda_{i}^{\pm}]$, which turns out to be an isomorphism \cite[Lemma 3.4]{MoonWong21}. The localization of a ring corresponds to taking an open subset of the associated scheme. Thus $\spec \cC(\Sigma_{g, n})_{\CC}$ has a (Zariski) open subset $\spec S^{-1}\cC(\Sigma_{g, n})_{\CC} \cong \spec \CC[\lambda_{i}^{\pm 1}] \cong \spec \CC[e_{i}^{\pm}] \cong (\CC^{*})^{m}$. In particular, $\spec \cC(\Sigma_{g, n})_{\CC}$ has an irreducible component, which has an open dense subset isomorphic to the algebraic torus of dimension $m$. Therefore, $\dim \spec \cC(\Sigma_{g, n})_{\CC} \ge m$. 

The only possibility is $\dim \spec \cC(\Sigma_{g, n})_{\CC} = m$ and $\ker \widetilde{\rho} = I$ is the trivial ideal. Therefore $\cC(\Sigma_{g, n})_{\CC} \cong \widetilde{\cA}(\Sigma_{g, n})_{\CC}$ and hence is an integral domain. Since $\cC(\Sigma_{g, n})$ has no torsion (Remark \ref{rem:torsionfree}), $\cC(\Sigma_{g, n}) \subset \cC(\Sigma_{g, n})_{\CC}$ and it is also an integral domain. 

Now the only remaining case is $\Sigma_{0,3}$ where $\cA(\Sigma_{0, 3})$ is undefined. But we may formally set $\cA(\Sigma_{0, 3}) = \ZZ[e_{i}]_{1 \le i \le 3}$ and define $\rho : \cA(\Sigma_{0, 3}) \to \cC(\Sigma_{0,3})$ as $\rho(e_{i}) = \underline{\beta_{ii+1}}$ (see Theorem \ref{thm:presentationC0n} for the notation). Then we can follow the same line of the proof to get the same conclusion. 
\end{proof}

\begin{remark}
If $\chi(\Sigma_{g, n}) \ge 0$ (so $g = 0$ and $n = 1, 2$), $\cC(\Sigma_{g, n})$ is no longer an integral domain \cite[Remark 6.3]{ACDHM21}. 
\end{remark}

The following statement immediately follows from Theorem \ref{thm:intdom} and \cite[Theorem C]{MoonWong21}. 

\begin{theorem}\label{thm:domainSq}
Suppose that $\chi(\Sigma_{g, n}) < 0$ and $n > 0$. Then $\cS^{q}(\Sigma_{g, n})$ is a non-commutative domain. 
\end{theorem}

\begin{remark}
Thang Le kindly informed us that with his collaborators, he also proved Theorem \ref{thm:domainSq} with a completely independent method \cite{BKL23}. Their proof covers even the case that $q$ is not a formal variable.
\end{remark}

\begin{proof}[Proofs of Theorem \ref{thm:defquantTeich} and Theorem \ref{thm:defquantcluster}]
Theorem A of \cite{MoonWong21} states that if $\cC(\Sigma_{g, n})$ is an integral domain, $\Phi$ must be injective. Thus, we obtain Theorem \ref{thm:defquantTeich}. In the last part of the proof of \ref{thm:intdom}, we showed that $\widetilde{\cA}(\Sigma_{g, n})_{\CC} \cong \cC(\Sigma_{g, n})_{\CC}$, so they have the same field of fractions. Since $\widetilde{\cA}(\Sigma_{g, n})_{\CC}$ is an algebraic extension of $\cA(\Sigma_{g, n})_{\CC}$ in its field of fractions, they have the same field of fractions, too. Thus, we can conclude that $\cS^{q}(\Sigma_{g, n})$ can be understood as a deformation quantization of $\cA(\Sigma_{g, n})$. 
\end{proof}


\section{Implications for $\cA(\Sigma_{g, n})$}\label{sec:clusteralgebraimplication}

The compatibility of the curve algebra $\cC(\Sigma_{g, n})$ and cluster algebra $\cA(\Sigma_{g, n})$ provides us new insight to some questions on the structure of cluster algebras. In this section, we investigate two questions regarding the finite generation of $\cA(\Sigma_{g, n})$ (Theorem \ref{thm:infinitegenerationA}) and the comparison of $\cA(\Sigma_{g, n})$ with $\cU(\Sigma_{g, n})$ (Theorem \ref{thm:torusupper}). We still assume that $\chi(\Sigma_{g, n}) < 0$. 

\subsection{Non-finite generation for $g \ge 1$}

It was observed in \cite[Proposition 1.3]{Ladkani13}, following \cite[Proposition 11.3]{Muller13}, that $\cA(\Sigma_{g, 1})$ is not finitely generated for all $g \ge 1$. It is plausible to believe that $\cA(\Sigma_{g, n})$ is more complicated than $\cA(\Sigma_{g,1})$. Thus one may guess that $\cA(\Sigma_{g, n})$ is not finitely generated for all $n$. However, the lack of a functorial morphism $\cA(\Sigma_{g, n}) \to \cA(\Sigma_{g, 1})$ makes it difficult to prove the non-finite generation of $\cA(\Sigma_{g, n})$ in general. We suggest a new approach to resolve this issue, using invariant theory and `mod 2 reduction.'

The first key technical ingredient is Nagata's theorem \cite[Theorem 3.3]{Dolgachev03} and its extension to arbitrary base ring by Seshadri \cite{Seshadri77}. For a finitely generated $k$-algebra $A$, it is not true that its subalgebra $B \subset A$ is finitely generated. However, if $A$ is equipped with a reductive group $G$-action, Nagata's theorem tells us that the invariant subalgebra $A^{G}$ is finitely generated. For our purpose, the following consequence of the Seshadri-Nagata's theorem is handy. 

\begin{lemma}\label{lem:Nagata}
Let $k$ be a field. Let $A$ be a finitely generated $\ZZ^{r}$-graded $k$-algebra, so $A \cong \bigoplus_{\ba \in \ZZ^{r}}A_{\ba}$ such that $A_{\ba}A_{\bfb} \subset A_{\ba+\bfb}$. Then $A_{\mathbf{0}}$ is finitely generated. 
\end{lemma}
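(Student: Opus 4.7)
The plan is to translate the $\ZZ^{r}$-grading into an action of an algebraic torus and then invoke Nagata's finite generation theorem for invariants. This is the standard dictionary between graded algebras and torus actions, and it reduces the lemma to a textbook result in geometric invariant theory.

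First, I would observe that a $\ZZ^{r}$-grading on the finitely generated $k$-algebra $A$ is the same data as a rational coaction
\[
	A \to A \otimes_{k} k[\ZZ^{r}] = A \otimes_{k} k[t_{1}^{\pm 1}, \ldots, t_{r}^{\pm 1}]
\]
sending a homogeneous element $x \in A_{\ba}$ to $x \otimes t^{\ba}$, where $t^{\ba} = t_{1}^{a_{1}} \cdots t_{r}^{a_{r}}$. Since $\spec k[\ZZ^{r}]$ is the split algebraic torus $T = \mathbb{G}_{m}^{r}$, this coaction is equivalent to an algebraic action of $T$ on $\spec A$. Under this correspondence, the weight-$\ba$ eigenspace for the $T$-action on $A$ is precisely $A_{\ba}$; in particular, the invariant subring $A^{T}$ coincides with the zero-graded piece $A_{\mathbf{0}}$.

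Second, I would invoke Nagata's theorem: if a geometrically reductive algebraic group $G$ acts rationally on a finitely generated $k$-algebra $A$, then $A^{G}$ is a finitely generated $k$-algebra (see \cite[Theorem 3.3]{Dolgachev03}). A split torus is linearly reductive over any field, as every rational representation decomposes into a direct sum of one-dimensional character eigenspaces; hence $T$ is in particular geometrically reductive, with no restriction on the characteristic of $k$. Applying Nagata's theorem to the action of $T$ on $A$ gives that $A_{\mathbf{0}} = A^{T}$ is finitely generated over $k$.

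There is essentially no serious obstacle here, since the whole argument reduces to recognizing $A_{\mathbf{0}}$ as a torus-invariant subring; the mildest issue is verifying that the invariant-theoretic conclusion holds in all characteristics, and the linear reductivity of tori handles this automatically. If one wanted to weaken the hypothesis on $k$ to an arbitrary commutative base ring, one would instead cite Seshadri's extension \cite{Seshadri77}, but this is not needed at the level of the stated lemma.
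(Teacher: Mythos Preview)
Your proof is correct and follows essentially the same route as the paper: both reinterpret the $\ZZ^{r}$-grading as a comodule structure over $k[t_{1}^{\pm 1},\dots,t_{r}^{\pm 1}]$, identify $A_{\mathbf{0}}$ with the $\mathbb{G}_{m}^{r}$-invariants, and then appeal to Nagata's theorem. The only cosmetic difference is that the paper cites Seshadri's extension \cite{Seshadri77} directly (emphasizing the scheme-theoretic nature of the action, relevant since the key application is over $k=\ZZ_{2}$ where $\mathbb{G}_{m}^{r}$ has only one $k$-point), whereas you cite the field-case version in \cite{Dolgachev03} and note linear reductivity of tori; either citation suffices for the stated lemma.
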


\begin{proof}
Recall that an affine group scheme $\mathbb{G}_{m}^{r} := \spec k[x_{i}^{\pm}]_{1 \le i \le r}$-action on $\spec A$ is given by a $k$-linear map
\[
	A \to A \otimes_{k}k[x_{i}^{\pm}],
\]
which makes $A$ as a comodule under the coalgebra $k[x_{i}^{\pm}]$. We may set 
\[
	A_{\ba} := \{r \in A\;|\; r \mapsto r \otimes \prod x_{i}^{a_{i}}\}.
\]
Then it is straightforward to check that the above coalgebra strucure is equivalent to a $\ZZ^{r}$-grading structure on $A$. Now $A_{\mathbf{0}} \cong A^{\mathbb{G}_{m}^{r}}$, which is finitely generated by \cite[Remark 4, p.242]{Seshadri77}. 
\end{proof}

\begin{remark}
The group action in the proof of Lemma \ref{lem:Nagata} should be understood as an affine group \emph{scheme} action, not a set-theoretic one. We will consider the $k = \ZZ_{2}$ case.  But then the set of $\ZZ_{2}$-valued points of $\mathbb{G}_{m}^{r} = \spec \ZZ_{2}[x_{i}^{\pm}]$ has only one point $(1, 1, \cdots, 1)$. Thus, set-theoretically, it is a trivial group. 
\end{remark}

\begin{remark}
Primarily, we will use the contrapositive of Lemma \ref{lem:Nagata}. If $A_{\mathbf{0}}$ is not finitely generated, then  $A$ is not finitely generated. 
\end{remark}

\begin{proposition}\label{prop:grading}
Let $R$ be an integral domain. Then $\cA(\Sigma_{g, n})_{R}$ and $\cC(\Sigma_{g, n})_{R}$ have $\ZZ^{n}$-graded ring structure. 
\end{proposition}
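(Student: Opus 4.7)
The plan is to construct the $\mathbb{Z}^{n}$-grading on $\cC(\Sigma_{g,n})_{R}$ directly, then transport it to $\cA(\Sigma_{g,n})_{R}$ through the monomorphism $\rho$.

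For $\cC(\Sigma_{g,n})_{R}$, I would assign to each generator a degree in $\mathbb{Z}^{n}$: set $\deg(v_{i}) = 2 e_{i}$, $\deg(v_{i}^{-1}) = -2 e_{i}$ (where $e_{i}$ denotes the $i$-th standard basis vector), $\deg(\gamma) = \mathbf{0}$ for every loop $\gamma$, and $\deg(\underline{\alpha}) = -\sum_{i=1}^{n} n_{i}(\alpha) e_{i}$ for every arc $\underline{\alpha}$, where $n_{i}(\alpha) \in \{0, 1, 2\}$ counts the endpoints of $\underline{\alpha}$ at $v_{i}$. To show that this descends to a grading on $\cC(\Sigma_{g,n})_{R}$, I would verify that each of the four defining relations in Definition \ref{def:curvealgebra} is homogeneous. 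The ordinary skein relation is immediate since all three configurations share the same four endpoints, all lying outside a small ball disjoint from the punctures. The puncture-skein relation $v_{i} \cdot (\text{arcs meeting at } v_{i}) = (\text{resolution 1}) + (\text{resolution 2})$ is homogeneous because each resolution at $v_{i}$ eliminates two endpoints there, while the $v_{i}$ factor on the left contributes the compensating degree $+2 e_{i}$. Both the framing and puncture-framing relations identify small (possibly puncture-encircling) loops with the scalars $\mp 2$, so both sides have degree $\mathbf{0}$; the computation $v\alpha = -2 + 2$ in Example \ref{ex:unpuncturedmonogon} serves as a consistency check.

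For the cluster algebra, I would appeal to Theorem \ref{thm:clusteralgebraofsurface}, according to which every cluster variable of $\cA(\Sigma_{g,n})_{R}$ is (the class of) a tagged arc $\alpha$. By Definition \ref{def:rholocal}, $\rho(\alpha) = v^{t_{v}(\alpha)} w^{t_{w}(\alpha)} \underline{\alpha}$, which is homogeneous of degree $(2 t_{v}(\alpha) - 1) e_{v} + (2 t_{w}(\alpha) - 1) e_{w}$ when the endpoints $v, w$ are distinct, and of degree $(4 t_{v}(\alpha) - 2) e_{v}$ when $v = w$. Hence $\rho(\cA(\Sigma_{g,n})_{R})$ is generated by homogeneous elements of $\cC(\Sigma_{g,n})_{R}$, and therefore is a graded $R$-subalgebra. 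Since $\rho_{R}$ is injective by Proposition \ref{prop:inj}, this induces the desired $\mathbb{Z}^{n}$-grading on $\cA(\Sigma_{g,n})_{R}$ itself.

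The main routine obstacle is the case-by-case verification of homogeneity for the puncture-skein and puncture-framing relations, which amounts to careful endpoint bookkeeping against the factors $v_{i}^{\pm 1}$ appearing in the local diagrams. A pleasant feature of the strategy is that routing through $\rho$ bypasses a direct analysis of the exchange relations on the cluster side: since every cluster variable is already homogeneous in $\cC$, no additional check is needed to confirm that the grading is compatible with the cluster structure.
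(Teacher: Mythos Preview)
Your argument is correct and follows essentially the same route as the paper: define a $\ZZ^{n}$-grading on $\cC(\Sigma_{g,n})_{R}$ by assigning degrees to arcs, loops, and vertex classes, verify that the four defining relations are homogeneous, and then observe that the image of $\rho$ is generated by homogeneous elements so that $\cA(\Sigma_{g,n})_{R}$ inherits the grading. The only discrepancy is a harmless sign convention---the paper sets $\deg(v_{i}) = -2\be_{i}$ and $\deg(\underline{\alpha}) = \be_{i}+\be_{j}$ for an arc from $v_{i}$ to $v_{j}$, exactly the negative of your choice---which of course does not affect homogeneity of the relations or the conclusion.
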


\begin{proof}
We may impose $\cC(\Sigma_{g, n})_{R}$ as a $\ZZ^{n}$-graded algebra structure in the  following way. Let $V = \{v_{i}\}$ be the vertex set. For an arc $\alpha$ connecting $v_{i}$ and another vertex $v_{j}$ (we allow $i = j$), the grade of $\alpha$ is defined as $\be_{i}+\be_{j}$, where $\{\be_{i}\}$ is the standard basis of $\ZZ^{n}$. For any loop, its grade is $\mathbf{0}$. Finally, the grade of the vertex class $v_{i}$ is $-2\be_{i}$ (hence the grade of $v_{i}^{-1}$ is $2\be_{i}$). It is straightfoward to check that all skein relations in Definition \ref{def:curvealgebra} are homogeneous. Thus, it is well-defined. 

Since $\cA(\Sigma_{g, n})_{R} \subset \cC(\Sigma_{g, n})_{R}$ is generated by homogeneous elements, $\cA(\Sigma_{g, n})_{R}$ is also a $\ZZ^{n}$-graded algebra. 
\end{proof}

\begin{remark}\label{rem:Zgrading}
For each vertex $v_{i}$, we may impose a $\ZZ$-graded algebra structure on $\cC(\Sigma_{g, n})_{R}$ (and on $\cA(\Sigma_{g, n})_{R}$), by composing  the grade map with the $i$-th projection $p_{i}: \ZZ^{n} \to \ZZ$. 
\end{remark}

The following proposition is proved by Ladkani in \cite[Proposition 1.3]{Ladkani13}, over $\ZZ$ coefficients. The same proof works for arbitrary base ring, but we provide a sketch for the sake of completeness. 

\begin{proposition}[Ladkani]\label{prop:onepuncturenonfinitegeneration}
For any integral domain $R$ and $g \ge 1$, $\cA(\Sigma_{g, 1})_{R}$ is not finitely generated. 
\end{proposition}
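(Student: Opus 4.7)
The plan is to adapt Ladkani's original argument \cite[Proposition 1.3]{Ladkani13} over $\ZZ$-coefficients to an arbitrary integral domain $R$. The overall strategy is to exhibit an infinite sequence of cluster variables $\{x_{n}\}$ in $\cA(\Sigma_{g,1})_{R}$ whose complexity in a fixed initial cluster grows without bound, and to argue that this is incompatible with finite generation.

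First I would fix an ideal triangulation $\cT$ of $\Sigma_{g,1}$ with edge set $\{e_{1}, \ldots, e_{m}\}$, where $m = 6g - 3$. Since $g \ge 1$, the mapping class group of $\Sigma_{g,1}$ contains a Dehn twist $\phi$ around a non-separating simple closed curve, which has infinite order. Picking an edge $\underline{e} \in \cT$ that crosses the twist curve transversely, the iterates $\phi^{n}(\underline{e})$ are pairwise non-isotopic, and by Theorem \ref{thm:clusteralgebraofsurface} each produces a distinct cluster variable $x_{n} \in \cA(\Sigma_{g,1})_{R}$. The Laurent phenomenon then lets me write each $x_{n}$ as a reduced Laurent polynomial in $\{e_{1}, \ldots, e_{m}\}$, and a standard identification for cluster algebras from surfaces gives its denominator vector $d(x_{n}) \in \ZZ_{\ge 0}^{m}$ as the vector of geometric intersection numbers of $\phi^{n}(\underline{e})$ with the edges of $\cT$. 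These intersection numbers grow linearly in $|n|$, so $\lVert d(x_{n}) \rVert \to \infty$.

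The hard part is to deduce non-finite generation from the unbounded growth of $d(x_{n})$. A direct cone argument does not suffice because, while the denominator vector of a polynomial expression in a fixed generating set is controlled coordinatewise by the generators' denominator vectors, the coefficients of that control can themselves be unbounded. Ladkani bypasses this obstacle by combining the unbounded growth with the fact that each $x_{n}$ is an \emph{irreducible} element of $\cA(\Sigma_{g,1})_{R}$, using the explicit structure of the exchange graph and the cluster combinatorics of $\Sigma_{g,1}$ to produce the required contradiction. Since the cluster combinatorics, the Laurent phenomenon, denominator vectors, and the irreducibility of cluster variables are all defined via integer data that is preserved under base change $\otimes_{\ZZ} R$, this argument transfers from $\ZZ$-coefficients to any integral domain $R$ without substantive modification, yielding the proposition.
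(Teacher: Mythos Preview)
Your proposal has a genuine gap at the decisive step. You correctly observe that unbounded denominator vectors alone do not force non-finite generation, since a high-degree polynomial in a fixed finite generating set can have arbitrarily large denominator vector. But your response to this is not an argument: you write that ``Ladkani bypasses this obstacle by combining the unbounded growth with the fact that each $x_{n}$ is an irreducible element \ldots\ using the explicit structure of the exchange graph and the cluster combinatorics \ldots\ to produce the required contradiction,'' without ever saying what that contradiction is or how it is obtained. Irreducibility of a cluster variable $y$ does not prevent $y$ from being, say, an $R$-linear combination of other cluster variables, and it has no evident interaction with denominator-vector growth. The proof stops exactly where its content should begin.

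The paper's proof (which is Ladkani's) is much simpler and avoids denominator vectors, Dehn twists, and irreducibility entirely. It uses the $\ZZ$-grading of Proposition~\ref{prop:grading}: on $\Sigma_{g,1}$ every arc has both endpoints at the unique puncture, so every cluster variable is homogeneous of degree exactly~$2$. Any product of $k \ge 2$ cluster variables then has degree $2k \ge 4$, so a cluster variable can lie in the subalgebra generated by finitely many others only as an $R$-linear combination of them. Since distinct arcs have distinct denominator vectors (hence the cluster variables are $R$-linearly independent) and there are infinitely many non-isotopic arcs on $\Sigma_{g,1}$, no finite subset suffices. This grading argument is insensitive to the base ring and transfers to any integral domain $R$ verbatim.
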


\begin{proof}
By definition when $n = 1$, $\cA(\Sigma_{g, 1})_{R}$ is generated by ordinary arcs only, and  all exchange relations are homogeneous of degree two with respect to the $\ZZ$-grading in Proposition \ref{prop:grading}. Therefore, a cluster variable cannot be expressed as a polynomial with respect to the other cluster variables. On the other hand, there are infinitely many non-isotopic arc classes on $\Sigma_{g, 1}$, so there are infinitely many cluster variables. Thus, $\cA(\Sigma_{g, 1})_{R}$ cannot be finitely generated. 
\end{proof}

Over $\ZZ_{2}$-coefficient, we may reduce the proof of the non-finite generation to $n = 2$ case. 

\begin{proposition}\label{prop:reduction}
Let $n \ge 2$. If $\cA(\Sigma_{g, n})_{\ZZ_{2}}$ is not finitely generated, then $\cA(\Sigma_{g, n+1})_{\ZZ_{2}}$ is not finitely generated. 
\end{proposition}

\begin{proof}
We think of $\cA(\Sigma_{g, n})$ as a subalgebra of $\cC(\Sigma_{g, n})$. Thus, instead of arcs and tagged arcs, we will describe all elements as a combination of arcs and vertices. 

We construct a morphism between curve algebras induced from $\iota : \Sigma_{g, n+1} \to \Sigma_{g, n}$ which forgets a vertex $v$. With respect to $v$, note that $\cC(\Sigma_{g, n+1})_{R}$ and $\cA(\Sigma_{g, n+1})_{R}$ have a $\ZZ$-graded structure (Remark \ref{rem:Zgrading}). Let $\cC(\Sigma_{g, n+1})_{R, 0}$ be the grade 0 subalgebra of $\cC(\Sigma_{g, n+1})_{R}$. 

We claim that when $R = \ZZ_{2}$, there is a well-defined surjective homomorphism $\psi : \cC(\Sigma_{g, n+1})_{\ZZ_{2}, 0} \to \cC(\Sigma_{g, n})_{\ZZ_{2}}$. Indeed, $\cC(\Sigma_{g, n+1})_{\ZZ_{2}, 0}$ is generated by the following elements:
\begin{enumerate}
\item vertex classes $v_{1}^{\pm}, v_{2}^{\pm}, \cdots, v_{n}^{\pm}$ (except $v$); 
\item loop classes;
\item tagged arcs disjoint from $v$;
\item $v\underline{\alpha_{1}}\underline{\alpha_{2}}$ where each $\underline{\alpha_{1}}, \underline{\alpha_{2}}$ are arcs connecting $v$ with other vertices; 
\item $v\underline{\beta}$ where $\underline{\beta}$ is an arc connecting $v$ and itself. 
\end{enumerate}

For each case, by applying a puncture-skein relation, we can find a representative which is disjoint from $v$. For (1), (2), and (3), this is clear. For (4), by the puncture-skein relation, we can resolve the crossing of $v\underline{\alpha_{1}}\underline{\alpha_{2}}$ to get the sum of two arcs disjoint from $v$, which we call  $\underline{\gamma_{1}} $ and $\underline{\gamma_{2}}$. Now if we forget $v$, then as isotopy classes on $\Sigma_{g, n}$, we have $\underline{\gamma_{1}} = \underline{\gamma_{2}}$.   Thus  $v\underline{\alpha_{1}}\underline{\alpha_{2}} = \underline{\gamma_{1}} + \underline{\gamma_{2}} = 2\underline{\gamma_{1}} = 0 \in \cC(\Sigma_{g, n})_{\ZZ_{2}}$. The case of (5) is similar. Since we only used the puncture-skein relation, the map $\psi$ is well-defined. The surjectivity is immediate. 

By composition, we obtain a map 
\[
	\cA(\Sigma_{g, n+1})_{\ZZ_{2}, 0} \to \cC(\Sigma_{g, n+1})_{\ZZ_{2}, 0} \stackrel{\psi}{\to} \cC(\Sigma_{g, n})_{\ZZ_{2}}.
\]
The cluster algebra $\cA(\Sigma_{g, n+1})_{\ZZ_{2}, 0}$ is generated by multiples of tagges arcs, and the image of them by the map $\psi$ is still a multiple of tagged arcs on $\Sigma_{g, n}$. The only exception is a multiple of $v\underline{\beta}$, where $\underline{\beta}$ is an arc whose both ends are $v$. (Note that two ends of $\beta \in \cA(\Sigma_{g, n+1})$, whose underlying curve is $\underline{\beta}$, must be tagged in the same way, so $\beta = \underline{\beta}$ or $\beta = v^{2}\underline{\beta}$.) In this case, after applying the puncture-skein relation at the endpoint of $\underline{\beta}$, $v\underline{\beta}$ becomes a multiple of the sum of two loops $\underline{\ell_{1}}$ and $\underline{\ell_{2}}$.  Once we forget the vertex, then in  $\cC(\Sigma_{g, n})_{\ZZ_{2}}$ we have $\underline{\ell_{1}} + \underline{\ell_{2}} = 2\underline{\ell_{1}} = 0$. In summary, the image of $\cA(\Sigma_{g, n+1})_{\ZZ_{2}, 0}$ by $\psi$ is still tagged arcs on $\Sigma_{g, n}$. Therefore, \emph{if $n \ge 2$}, the image is in $\cA(\Sigma_{g, n})_{\ZZ_{2}}$, and we have a morphism $\psi : \cA(\Sigma_{g, n+1})_{\ZZ_{2}, 0} \to \cA(\Sigma_{g, n})_{\ZZ_{2}}$. 

It is straightforward to check that $\psi : \cA(\Sigma_{g, n+1})_{\ZZ_{2}, 0} \to \cA(\Sigma_{g, n})_{\ZZ_{2}}$ is surjective. Therefore, if $\cA(\Sigma_{g, n})_{\ZZ_{2}}$ is not finitely generated, then $\cA(\Sigma_{g, n+1})_{\ZZ_{2}, 0}$ is not finitely generated. By Lemma \ref{lem:Nagata}, $\cA(\Sigma_{g, n+1})_{\ZZ_{2}}$ is not finitely generated, too. 
\end{proof}

\begin{remark}\label{rem:n=1}
On the other hand, when $n = 1$, $\cA(\Sigma_{g, 1})_{\ZZ_{2}}$ is generated by \emph{ordinary arcs} only. Thus $\psi : \cA(\Sigma_{g, 2})_{\ZZ_{2}, 0} \to \cC(\Sigma_{g, 1})_{\ZZ_{2}}$ does \emph{not} factor through $\cA(\Sigma_{g, 1})_{\ZZ_{2}}$ in general.
\end{remark}

\begin{remark}
The reduction map $\Psi : \cA(\Sigma_{g, n+1})_{R, 0} \to \cA(\Sigma_{g, n})_{R}$ does not behave well for a general base ring $R$. For example, both the punctured loop $\ell$ around $v$ and a trivial loop $\ell'$ near $v$ both map to the same trivial loop under the map that forgets $v$.   Thus $\ell - \ell' = 4$ but after the forgetful map $\Psi(4) = \Psi(\ell - \ell') = 0$. In particular, if the base ring $R$ is a field of characteristic $\ne 2$, $\Psi$ is a zero map. 
\end{remark}

\begin{proof}[Proof of Theorem \ref{thm:infinitegenerationA} for $g \ge 1$] 
\textsf{Step 1.} First of all, observe that to show the non-finite generation of $\cA(\Sigma_{g, n})$, it is sufficient to show that $\cA(\Sigma_{g, n})_{\ZZ_{2}}$ is not finitely generated, as there is a surjective morphism $\cA(\Sigma_{g, n}) \to \cA(\Sigma_{g, n})_{\ZZ_{2}}$. 

\textsf{Step 2.}
Let $\cC(\Sigma_{g, n})^{+} \subset \cC(\Sigma_{g, n})$ be a subalgebra generated by arcs, loops, and vertices, but not the inverses of vertices. By Definition \ref{def:rholocal}, we know that the homomorphism $\rho : \cA(\Sigma_{g, n}) \to \cC(\Sigma_{g, n})$ indeed factors through $\cC(\Sigma_{g, n})^{+}$. By taking the tensor product with $\ZZ_{2}$, we obtain a homomorphism 
\[
	\rho : \cA(\Sigma_{g, n})_{\ZZ_{2}} \to \cC(\Sigma_{g, n})_{\ZZ_{2}}^{+}.
\]
We have a similar variation for the map $\psi : \cC(\Sigma_{g, n+1})_{\ZZ_{2}}^{+} \to \cC(\Sigma_{g, n})_{\ZZ_{2}}^{+}$. 

\textsf{Step 3.} We specialize to $(g, n) = (1, 1)$. For the vertex $v$ that is forgotten by $\iota : \Sigma_{1, 2} \to \Sigma_{1, 1}$, we impose the associated $\ZZ$-grading structure on $\cC(\Sigma_{1, 2})_{\ZZ_{2}}$, $\cC(\Sigma_{1, 2})_{\ZZ_{2}}^{+}$, and on $\cA(\Sigma_{1, 2})_{\ZZ_{2}}$ (Remark \ref{rem:Zgrading}). Consider the composition 
\[
	\cA(\Sigma_{1, 2})_{\ZZ_{2}, \mathbf{0}} \to \cC(\Sigma_{1, 2})_{\ZZ_{2}, \mathbf{0}}^{+} \to \cC(\Sigma_{1, 1})_{\ZZ_{2}}^{+}
\]
and denote it by $\psi$. 

We claim that the image of $\psi : \cA(\Sigma_{1, 2})_{\ZZ_{2}} \to \cC(\Sigma_{1, 1})_{\ZZ_{2}}^{+}$ is $\cA(\Sigma_{1, 1})_{\ZZ_{2}}$. Indeed, if we denote the unique vertex by $w$, then the image of $\psi$ is generated by $\underline{\beta}$ and $w^{2}\underline{\beta}$ for an ordinary arc $\beta$. Applying the puncture-skein relation, we have $w\underline{\beta} = \underline{\gamma_{1}} + \underline{\gamma_{2}}$ for two loops. But any loop in $\Sigma_{1, 1}$ is a $(p, q)$-torus knot for two relatively prime integers $p$ and $q$, and $\underline{\gamma_{1}} = \underline{\gamma_{2}}$ because they are realized by the same $(p, q)$. Thus, $w\underline{\beta} = 2\underline{\gamma_{1}} = 0 \in \cC(\Sigma_{1, 1})_{\ZZ_{2}}^{+}$ and so is $w^{2}\underline{\beta}$. Therefore, the image of $\psi$ is generated by ordinary arcs only, so $\im\;\psi = \cA(\Sigma_{1, 1})_{\ZZ_{2}}$ is not finitely generated by Proposition \ref{prop:onepuncturenonfinitegeneration}. Therefore, $\cA(\Sigma_{1, 2})_{\ZZ_{2}, \mathbf{0}}$ and $\cA(\Sigma_{1, 2})_{\ZZ_{2}}$ are not finitely generated by Lemma \ref{lem:Nagata}. By Proposition \ref{prop:reduction}, $\cA(\Sigma_{1, n})$ for all $n \ge 1$ are not finitely generated. 

\textsf{Step 4.} 
Now we consider $g \ge 2$. Recall that there is a $g$ to one branched covering $\tau : \Sigma_{g} \to \Sigma_{1}$, branched at two points. This induces a covering map $\tau : \Sigma_{g, 2} \to \Sigma_{1, 2}$ that sends two puctures to the corresponding two punctures. by taking the image of every curve class, we obtain a map $\tau_{*} : \cC(\Sigma_{g, 2})_{R} \to \cC(\Sigma_{1, 2})_{R}$, which is clearly surjective. This map induces a surjective map $\tau_{*} : \cA(\Sigma_{g, 2})_{R} \to \cA(\Sigma_{1, 2})_{R}$. Since $\cA(\Sigma_{1, 2})_{\ZZ_{2}}$ is not finitely generated and $\tau_{*}$ is surjective, $\cA(\Sigma_{g, 2})_{\ZZ_{2}}$ is also not finitely generated. Applying Proposition \ref{prop:reduction}, we get the desired result. 
\end{proof}

\begin{remark}
Another way to think about the special property of $\Sigma_{1, 1}$ is the following. For a fixed triangulation $\cT$, one may write the vertex class $w$ as a Laurent polynomial with respect to the edges in $\cT$. An explicit formula can be found, for example, in \cite[Definition 5.2]{MusikerSchifflerWilliams11}. $\Sigma_{1, 1}$ is the only case that $v$ is a multiple of two. 
\end{remark}

\subsection{Finite generation for $g = 0$}

The situation is entirely different when $g = 0$. The finite generation of $\cA(\Sigma_{0, n})$ follows immediately from the presentation of $\cC(\Sigma_{0, n})$ in Theorem \ref{thm:presentationC0n}. 

\begin{proof}[Proof of Theorem \ref{thm:infinitegenerationA} for $g = 0$]
The proof is essentially identical to that of \cite[Prop 3.2]{ACDHM21}, but for the reader's convenience, we sketch the proof here. 

Recall that, without loss of generality, we assume that the $n$ punctures lie on a small circle $C \subset S^{2}$, and $\underline{\beta_{i,j}}$ is the simple arc connecting $v_{i}$ and $v_{j}$  in the disk bounded by $C$. 

Let $\alpha \in \cA(\Sigma_{0, n})$ be a tagged arc.  So $\underline{\alpha}$ connects two (not necessarily different) punctures. If $\underline{\alpha}$ is inside of $C$, then $\underline{\alpha}$ is isotopic to one of $\underline{\beta_{ij}}$ (if $\underline{\alpha}$ connects two distinct vertices) or $0$ (if two ends of $\underline{\alpha}$ are the same). So $\alpha$ is either zero or one of $\underline{\beta_{ij}}$, $v_{i}\underline{\beta_{ij}}$, $v_{j}\underline{\beta_{ij}}$, or $v_{i}v_{j}\underline{\beta_{ij}}$, depending on the tagging. 

If $\underline{\alpha}$ is outside of $C$, then we can `drag into' $\underline{\alpha}$ and use the puncture-skein relation to break the curve at the vertices.  Then we can describe $\underline{\alpha}$ as a combination of tagged arcs which meet the outside smaller number of times.  Now we may apply induction and get the desired result. 
\end{proof}

We believe that by the virtue of Theorem \ref{thm:presentationC0n}, the following is an interesting and approachable problem. 

\begin{question}
Find a presentation of $\cA(\Sigma_{0, n})$. 
\end{question}

\subsection{Comparison with the upper cluster algebra}

We finish this paper with some remarks on the upper cluster algebra  $\cU(\Sigma_{g, n})$. Recall that $\cC(\Sigma_{g, n})'$ is the subalgebra of $\cC(\Sigma_{g, n})$ generated by isotopy classes of loops, arcs and decorated arcs (Definition \ref{def:Cprime}). 

\begin{lemma}\label{lem:inclusion}
There are inclusions of algebras
\[
	\cA(\Sigma_{g,n}) \subset \cC(\Sigma_{g, n})' \subset \cU(\Sigma_{g, n}).
\]
\end{lemma}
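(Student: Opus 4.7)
The plan is to prove the two inclusions separately: the first is nearly immediate from Theorem~\ref{thm:mainthm1}, while the second reduces to a Laurent-expansion claim for loops.

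\textbf{First inclusion $\cA(\Sigma_{g,n}) \subset \cC(\Sigma_{g,n})'$.} By Theorem~\ref{thm:mainthm1}, $\rho$ is an injective algebra homomorphism, so I identify $\cA(\Sigma_{g,n})$ with $\rho(\cA(\Sigma_{g,n})) \subset \cC(\Sigma_{g,n})$. By Definition~\ref{def:rholocal}, for a tagged arc $\alpha$ with endpoints $v, w$ (possibly equal), the image $\rho(\alpha)$ is one of $\underline{\alpha}$, $v\underline{\alpha}$, $w\underline{\alpha}$, or $vw\underline{\alpha}$, which are precisely the arc-type generators listed in Definition~\ref{def:Cprime}. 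Since $\cA(\Sigma_{g,n})$ is generated as an $R$-algebra by tagged arcs (its cluster variables), this gives $\rho(\cA(\Sigma_{g,n})) \subset \cC(\Sigma_{g,n})'$.

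\textbf{Setup for the second inclusion.} To make sense of $\cC(\Sigma_{g,n})' \subset \cU(\Sigma_{g,n})$ I would place all three algebras inside the common ambient field $\cF$ of the cluster algebra. By Theorem~\ref{thm:intdom}, $\cC(\Sigma_{g,n})$ is a domain, and via $\hat\Phi$ of Lemma~\ref{lemma:phihat} it embeds into the fraction field $Q(R)(e_i)$ associated to the edges of a fixed ordinary initial triangulation; this field is precisely $\cF$, and $\hat\Phi|_{\rho(\cA)}$ agrees with the natural inclusion of $\cA$ into $\cF$ on the initial cluster variables. Under this identification, $\cC(\Sigma_{g,n})' \subset \cU(\Sigma_{g,n})$ means that for every tagged triangulation $\cT^{\bowtie}$ with edges $\alpha'_1,\ldots,\alpha'_m$, every generator of $\cC(\Sigma_{g,n})'$ lies in $R[\rho(\alpha'_1)^{\pm},\ldots,\rho(\alpha'_m)^{\pm}]$.

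\textbf{Reduction to loops.} The arc-type generators $\underline{\beta},\, v\underline{\beta},\, w\underline{\beta},\, vw\underline{\beta}$ of $\cC(\Sigma_{g,n})'$ are each equal to $\rho$ applied to a tagged arc, so they lie in $\rho(\cA(\Sigma_{g,n})) \subset \cU(\Sigma_{g,n})$ by the Laurent phenomenon for $\cA(\Sigma_{g,n})$. What remains is to show that every isotopy class of a loop $\ell$ belongs to $\cU(\Sigma_{g,n})$; equivalently, $\ell$ admits a Laurent expansion in the tagged edge variables of every tagged triangulation.

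\textbf{Main obstacle: Laurent expansions of loops.} Fixing $\cT^{\bowtie}$, I would apply the skein relation of Definition~\ref{def:curvealgebra} to resolve the crossings of $\ell$ with the underlying arcs $\underline{\alpha'_i}$; by induction on the geometric intersection number this rewrites $\ell$ as a polynomial in the $\underline{\alpha'_i}$ inside $\cC(\Sigma_{g,n})$. To convert such an expression into a Laurent polynomial in the tagged variables $\rho(\alpha'_i)$, I would then invoke a known Laurent expansion of each puncture class $v_j$ in terms of the $\rho(\alpha'_i)$, available in closed form from the snake-graph formulas of Musiker-Schiffler-Williams \cite{MusikerSchifflerWilliams11}. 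The principal technical hurdle is the uniform treatment of tagged triangulations with dangles: near a jewel the resolution of $\ell$ requires the puncture-skein and puncture-framing relations, and one must check that the result reorganizes into a Laurent polynomial in the tagged edge variables $\rho(\alpha'_i)$ rather than only in the underlying ordinary arcs.
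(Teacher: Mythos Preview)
Your first inclusion and the overall setup are fine and match the paper. There is, however, a genuine gap in your ``reduction to loops.'' You assert that each arc-type generator $\underline{\beta},\, v\underline{\beta},\, w\underline{\beta},\, vw\underline{\beta}$ of $\cC(\Sigma_{g,n})'$ equals $\rho$ of some tagged arc. This fails when the two endpoints coincide: if $\underline{\beta}$ has both ends at $v$, then by Definition~\ref{def:taggedarc}(2) a tagged arc with underlying curve $\underline{\beta}$ must carry the \emph{same} tag at both ends, so $\rho$ can only produce $\underline{\beta}$ or $v^{2}\underline{\beta}$, never $v\underline{\beta}$. (There is also the side issue that if $\underline{\beta}$ bounds a one-punctured monogon it is not the underlying arc of any tagged arc at all, though this case can be handled via Lemma~\ref{lem:monogon}.) The paper closes this gap by observing that the puncture-skein relation at $v$ rewrites $v\underline{\beta}$ as a sum of two loop classes, so this case is absorbed into the loop case.

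For the loop case your approach diverges from the paper's. You propose to resolve $\ell$ against the arcs of $\cT^{\bowtie}$ via skein relations and then convert to tagged variables using Laurent formulas for the vertex classes, flagging the dangle/jewel situation as the ``principal technical hurdle.'' The paper sidesteps this entirely: for \emph{ordinary} triangulations it cites known Laurent expansions of loops (\cite[Section 12]{FockGoncharov06}, \cite[Theorem 4.2]{MusikerWilliams13}, \cite[Theorem 3.22]{RogerYang14}), and then reduces an arbitrary tagged triangulation to an ordinary one by invoking \cite[Proposition 3.15]{MusikerSchifflerWilliams11}. That last reduction is exactly what handles the hurdle you identified, and it is the cleanest way to finish.
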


\begin{proof}
The Compatibility Lemma and the fact that the image of $\rho$ factor through $\cC(\Sigma_{g, n})'$ imply the first inclusion. There are two extra classes of generators of $\cC(\Sigma_{g, n})'$ in $\cC(\Sigma_{g, n})' \setminus \cA(\Sigma_{g, n})$: loop classes and $v\underline{\beta}$, where $\underline{\beta}$ is an arc class with two ends both at $v$. (Note that $\underline{\beta}$ and $v^{2}\underline{\beta}$ are in $\cA(\Sigma_{g, n})$, if $n \ge 2$.) We obtain  that  $v\underline{\beta}$ is a sum of two loop classes by applying the puncture-skein relation. For an ordinary triangulation $\cT$ with edge set $E$, it has been proven several times (\cite[Section 12]{FockGoncharov06}, \cite[Theorem 4.2]{MusikerWilliams13}, and \cite[Theorem 3.22]{RogerYang14}) that a loop class is a Laurent polynomial with respect to the edges in a triangulation. The case of a tagged triangulation $\cT^{\bowtie}$ is reduced to the case of an ordinary triangulation, by \cite[Proposition 3.15]{MusikerSchifflerWilliams11}. Thus, we conclude that any element in $\cC(\Sigma_{g, n})'$ can be written as a Laurent polynomial with respect to the edges in a fixed tagged ideal triangulation. 

If we show that this expression is unique,  then set theoretically, $\cC(\Sigma_{g,n})' \subset \cU(\Sigma_{g, n})$ and we are done. This is because the three rings in the statement share isomorphic field of fractions. For a nonzero element $\alpha \in  \cC(\Sigma_{g, n})'$, if there are two Laurent polynomial expressions $f$ and $g$ for $\alpha$, then $f - g$ provides an algebraic relation in their field of fractions generated by edge classes in a fixed triangulation. Since their field of fractions are purely transcendentally generated by edge classes, this is impossible.
\end{proof}

\begin{proof}[Proof of Theorem \ref{thm:torusupper}]
Suppose that $\cA(\Sigma_{g, n}) =\cU(\Sigma_{g, n})$. Lemma \ref{lem:inclusion} implies that $\cA(\Sigma_{g, n}) = \cC(\Sigma_{g, n})'$. By Theorem \ref{thm:Cprimefinitegeneration}, $\cC(\Sigma_{g, n})'$ is finitely generated, but $\cA(\Sigma_{g, n})$ is not finitely generated.
\end{proof}

\begin{remark}\label{rem:g=0AC}
When $g = 0$, all loop classes are generated by tagged arc classes, as proved in \cite[Proposition 2.2]{BKPW16Involve} and  as evidenced by  Theorem~\ref{thm:presentationC0n}. Thus, $\cA(\Sigma_{0, n}) = \cC(\Sigma_{0, n})'$. 
\end{remark}

\begin{remark}\label{rem:CandU}
If $n \ge 2$, $\cC(\Sigma_{g, n})$ is not a subalgebra of $\cU(\Sigma_{g, n})$, because of the vertex classes. For a fixed ordinary triangulation $\cT$ and its edge set $E = \{e_{i}\}$, a vertex class $v$ can be written as a Laurent polynomial with respect to $E$ (see the proof of \cite[Lemma 3.2]{MoonWong21}). However, this is no longer true for a tagged triangulation $\cT^{\bowtie}$. On the other hand, when $n = 1$, we do not consider a tagged triangulation, so $v \in \cU(\Sigma_{g, n})$ and hence $\cC(\Sigma_{g, n}) \subset \cU(\Sigma_{g, n})$. 
\end{remark}

\begin{remark}\label{rem:midalgebra}
In a recent breakthrough in \cite{GrossHackingKeelKontsevich18}, for each combinatorial data defining a cluster algebra, Gross, Hacking, Keel, and Kontsevich defined yet another algebra motivated from mirror symmetry, the so-called \emph{mid-cluster algebra} ($\mathrm{mid}(V)$ in their terminology). For $\cA(\Sigma_{g, n})$, the mid-algebra is indeed equal to $\cC(\Sigma_{g, n})'$ and it admits a canonical basis parametrized by the tropical points of the dual cluster variety (\cite[Theorem 1.3]{MandelQin23}, \cite[Section 12]{FockGoncharov06}). 
\end{remark}

To the authors' knowledge, it has not been rigorously proved whether $\cC(\Sigma_{g, n})' = \cU(\Sigma_{g, n})$ or not. 

\begin{conjecture}\label{conj:CandU}
\begin{enumerate}
\item $\cC(\Sigma_{g, 1}) = \cU(\Sigma_{g, 1})$. 
\item If $n \ge 2$, $\cC(\Sigma_{g, n})' = \cU(\Sigma_{g, n})$. In particular, if $n \ge 4$, $\cA(\Sigma_{0, n}) = \cU(\Sigma_{0, n})$. 
\end{enumerate}
\end{conjecture}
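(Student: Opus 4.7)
The forward containments $\cC(\Sigma_{g,n})' \subseteq \cU(\Sigma_{g,n})$ and $\cC(\Sigma_{g,1}) \subseteq \cU(\Sigma_{g,1})$ are already established by Lemma \ref{lem:inclusion} and Remark \ref{rem:CandU}. So the content of the conjecture is the reverse inclusions, and the plan is to produce an $R$-basis of $\cU$ whose elements are manifestly in $\cC'$ (or in $\cC$ when $n=1$).

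The natural candidate is a \emph{lamination basis}. An integral lamination on $\Sigma_{g,n}$ --- a weighted disjoint union of simple loops and simple tagged arcs --- gives, via the compatibility $\rho$ from Theorem \ref{thm:mainthm1} and Definition \ref{def:rholocal}, a well-defined monomial in $\cC'$ (together with an extra $v$-factor freedom when $n=1$). First I would confirm $R$-linear independence of these multicurve classes by applying the Roger-Yang map $\hat\Phi_R$ of Lemma \ref{lemma:phihat}, fixing an ordinary triangulation with edges $\{e_i\}$ and an auxiliary term order on $\ZZ^m$, and reading off distinct leading Laurent monomials for distinct laminations. This is essentially the $q=1$ specialization of the Roger-Yang multicurve basis of $\cS^q(\Sigma_{g,n})$, combined with the explicit formula for $\rho$.

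The main work is then to show that these multicurve classes span $\cU$. Two strategies present themselves. The first is via the Gross-Hacking-Keel-Kontsevich theta basis of the mid-algebra $\mathrm{mid}(V)$: as noted in Remark \ref{rem:midalgebra}, $\mathrm{mid}(V) = \cC(\Sigma_{g,n})'$ and its theta basis is parametrized by tropical points of the dual cluster variety, which for surface cluster algebras are in bijection with integral laminations. Assuming the full Fock-Goncharov condition holds for $\cA(\Sigma_{g,n})$, the theta basis is an $R$-basis of $\cU$ coinciding with the lamination basis, and we are done. The second approach, more in the spirit of Muller's work \cite{Muller13, Muller16} for unpunctured bordered surfaces, is to apply a Starfish-style criterion: verify that $\cC'$ is a normal domain and matches $\cU$ at codimension-one exchanges, closing under mutation by exploiting the skein and puncture-skein relations.

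The principal obstacle is establishing this spanning property in the punctured setting. For bordered surfaces without punctures, Muller has verified $\cC' = \cU$ in essentially this form; the extra difficulty here is the interaction between tagging and vertex classes, precisely the phenomenon captured by $\rho$ in Definition \ref{def:rholocal}. The one-puncture case $n=1$ is particularly delicate: only one connected component of $\bE^{\bowtie}(\Sigma_{g,1})$ is used to define $\cA(\Sigma_{g,1})$, the vertex $v$ has no tagged counterpart, and yet the class $v$ can have unexpected divisibility behavior --- e.g.\ by $2$ for $\Sigma_{1,1}$, as noted after the proof of Proposition \ref{prop:onepuncturenonfinitegeneration}. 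Carefully tracking these arithmetic subtleties, and ensuring the lamination basis still has the right integral structure to generate $\cU$ over $\ZZ$, is where the hard analysis must concentrate.
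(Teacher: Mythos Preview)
The statement you are addressing is labeled \emph{Conjecture} in the paper, not a theorem; the authors do not give a proof and explicitly present it as open (see also Remark \ref{rem:midalgebra}, where they say ``it has not been rigorously proved whether $\cC(\Sigma_{g, n})' = \cU(\Sigma_{g, n})$ or not''). So there is no paper proof to compare your proposal against.

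As for the proposal itself: it is a reasonable outline of two standard strategies, but it is not a proof, and you essentially acknowledge this. In Strategy 1 you invoke the GHKK theta basis and write ``assuming the full Fock--Goncharov condition holds for $\cA(\Sigma_{g,n})$''; that assumption is precisely the hard part. The equality $\mathrm{mid}(V) = \cU$ is what the full Fock--Goncharov/EGM condition delivers, and verifying it for punctured surfaces (especially once-punctured closed surfaces, where the exchange matrix is highly degenerate and the cluster variety has genuinely bad behavior) is not known in general. In Strategy 2, the Starfish lemma requires checking that $\cC(\Sigma_{g,n})'$ is normal and equals the intersection of the Laurent rings for a single seed and its one-step mutations; normality of $\cC'$ is not established anywhere in the paper, and the codimension-one matching is exactly what fails to be routine in the presence of tagged arcs and vertex classes. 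Your closing paragraph correctly flags the $n=1$ arithmetic subtlety, but flagging it is not resolving it. In short, the proposal identifies the right ingredients and the right obstacles, but does not overcome them; this is consistent with the statement remaining a conjecture.
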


\bibliographystyle{alpha}

\end{document}